\newcommand{\PGL}{\mathop{\mathrm{PGL}}}
\newcommand{\PSL}{\mathop{\mathrm{PSL}}}
\newcommand{\PG}{\mathop{\mathrm{PG}}}
\newcommand{\Sym}{\mathop{\mathrm{Sym}}}
\newcommand{\Cay}{\mathop{\mathrm{Cay}}}
\newcommand{\Irr}{\mathop{\mathrm{Irr}}}
\newcommand{\Aut}{\mathop{\mathrm{Aut}}}
\newcommand{\Gcd}{\mathop{\mathrm{gcd}}}
\newcommand\one{{\bf1}}
\newtheorem{theorem}{Theorem}[section]
\newtheorem{proposition}[theorem]{Proposition}
\newtheorem{lemma}[theorem]{Lemma}
\newtheorem{corollary}[theorem]{Corollary}
\newtheorem{conjecture}[theorem]{Conjecture}
\newtheorem{remark}[theorem]{Remark}
\begin{document}

\title[An Erd\H{o}s-Ko-Rado-type theorem for $\PGL_3(q)$]{An
  Erd\H{o}s-Ko-Rado theorem for the derangement graph of $\PGL_3(q)$
  acting on the projective plane}

\author[K. Meagher]{Karen Meagher}
\address{ Department of Mathematics and Statistics,\newline
University of Regina, 3737 Wascana Parkway, S4S 0A4 Regina SK, Canada}\email{karen.meagher@uregina.ca}

\author[P. Spiga]{Pablo Spiga}
\address{
Dipartimento di Matematica e Applicazioni, University of Milano-Bicocca,\newline
Via Cozzi 53, 20125 Milano, Italy}\email{pablo.spiga@unimib.it}

\thanks{The first author is supported by NSERC.\\ \noindent Address correspondence to Pablo Spiga. (pablo.spiga@unimib.it)}

\keywords{derangement graph, independent sets, Erd\H{o}s-Ko-Rado theorem}

\begin{abstract}
In this paper we prove an Erd\H{o}s-Ko-Rado-type theorem for intersecting sets of permutations. We show that an intersecting set of maximal size in the projective general linear group $\PGL_3(q)$, in its natural action on the points of the projective line, is either a coset of the stabilizer of a point or a coset of the stabilizer of a line. This gives the first evidence to the veracity of~\cite[Conjecture~$2$]{KaPa}.
\end{abstract}

\subjclass[2010]{Primary 05C35; Secondary 05C69, 20B05}
\maketitle

\section{General results}\label{generalresults}

The Erd\H{o}s-Ko-Rado theorem~\cite{ErKoRa} determines the cardinality
and describes the structure of a set of maximal size of intersecting
$k$-subsets from $\{1,\ldots,n\}$. Specifically, the theorem says that
provided that $n > 2k$, a set of maximal size of intersecting
$k$-subsets from $\{1,\dots, n\}$ has cardinality ${n-1\choose k-1}$
and is the set of all $k$-subsets that contain a common fixed
element. Analogous results hold for many other objects other than
sets, and in this paper we are concerned with an extension of the
Erd\H{o}s-Ko-Rado theorem to permutation groups.

Let $G$ be a permutation group on $\Omega$. A subset $S$ of $G$ is
said to be \emph{intersecting} if for every $g,h\in S$ the permutation
$gh^{-1}$ fixes some point of $\Omega$ (note that this implies that
$\alpha^g = \alpha^h$, for some $\alpha \in \Omega$). As with the
Erd\H{o}s-Ko-Rado theorem, we are interested in finding the
cardinality of an intersecting set of maximal size in $G$ and
classifying the sets that attain this bound.  This problem can be
formulated in a graph-theoretic terminology.  We denote by $\Gamma_G$
the \emph{derangement graph} of $G$; the vertices of this graph are
the elements of $G$ and the edges are the pairs $\{g,h\}$ such that
$gh^{-1}$ is a \emph{derangement}, that is, $gh^{-1}$ fixes no
point. An intersecting set of $G$ is simply an \emph{independent
  set} or a \emph{coclique} of $\Gamma_G$.

The natural extension of the Erd\H{o}s-Ko-Rado theorem for the
symmetric group $\Sym(n)$ was proved in~\cite{CaKu} and~\cite{LaMa}.
These papers, using different methods, showed that every independent
set of $\Gamma_{\Sym(n)}$ has size at most $(n-1)!$. They both further
showed that the only independent sets meeting this bound are the
cosets of the stabilizer of a point. The same result was also proved
in~\cite{GoMe} using the character theory of $\Sym(n)$.

Recently there have been many papers questioning if the natural
extension of the Erd\H{o}s-Ko-Rado theorem holds for specific
permutation groups $G$ (see~\cite{AhMe, MR2302532,
  KaPa,MR2419214}). This means asking if the largest independent sets
in the derangement graph $\Gamma_G$ are the cosets in $G$ of the
stabilizer of a point. Typically, for a general permutation group, the
derangement graph may have independent sets of size larger than the
size of the stabilizer of a point, let alone hope that every such
independent set is the coset of the stabilizer of a point.  However, a
behavior very similar to $\Sym(n)$ is offered by $\PGL_2(q)$ in its
natural action on the projective line~\cite[Theorem~$1$]{KaPa}; the
independent sets of maximal size in the derangement graph of
$\PGL_2(q)$ are exactly the cosets of the stabilizer of a point. It is
not hard to see that for a generic projective linear group there
are maximum independent sets in the derangement graph that are not the
stabilizer of a point.  This lead to the following conjecture:

\begin{conjecture}[ {\cite[Conjecture~$2$]{KaPa}} ] The independent sets
  of maximal size in the derangement graph of $\PGL_{n+1}(q)$ acting
  on the points of the projective space $\PG_n(q)$ are exactly the
  cosets of the stabilizer of a point and the cosets of the stabilizer
  of a hyperplane.
\end{conjecture}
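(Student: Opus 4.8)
The plan is to bound the independence number with the Delsarte--Hoffman ratio bound and to recover the classification from the tightness of that bound; throughout write $G=\PGL_{n+1}(q)$, $\Omega=\PG_n(q)$, and let $m=|\Omega|=(q^{n+1}-1)/(q-1)$ be the number of points, which also equals the number of hyperplanes. The action is $2$-transitive and the derangement set $D$ is a union of conjugacy classes closed under inversion, so $\Gamma_G$ is a normal Cayley graph whose eigenvalues are $\eta_\chi=\chi(1)^{-1}\sum_{g\in D}\chi(g)$ for $\chi\in\Irr(G)$, each of multiplicity $\chi(1)^2$, with $\eta_{\one}=|D|=:d$ the valency. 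First I would isolate the fact that forces both conjectured families to appear: representing $g$ by a matrix $M\in\GL_{n+1}(q)$ up to scalars, $g$ fixes a point if and only if $M$ has an eigenvalue in $\mathbb{F}_q$, while $g$ fixes a hyperplane if and only if $M^{T}$ does, and since $M$ and $M^{T}$ share the same characteristic polynomial, $g$ fixes a point if and only if it fixes a hyperplane. Hence $D$ is simultaneously the derangement set of the action on points and of the action on hyperplanes, $\Gamma_G$ is the derangement graph of both $2$-transitive actions at once, and every coset of a point-stabilizer and every coset of a hyperplane-stabilizer is an intersecting set of the common size $|G|/m=|G_\alpha|$. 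When $n=1$ points and hyperplanes coincide and the two families collapse to one, recovering \cite[Theorem~1]{KaPa}.

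Next I would pin down the candidate least eigenvalue. Because the action on points is $2$-transitive, its permutation character is $\one+\chi_{\mathrm{pt}}$ with $\chi_{\mathrm{pt}}\in\Irr(G)$ of degree $m-1$; for $g\in D$ one has $\fix(g)=1+\chi_{\mathrm{pt}}(g)=0$, so $\chi_{\mathrm{pt}}\equiv-1$ on $D$ and therefore $\eta_{\chi_{\mathrm{pt}}}=-d/(m-1)$. The same computation for the action on hyperplanes produces a second irreducible character $\chi_{\mathrm{hyp}}$, distinct from $\chi_{\mathrm{pt}}$ once $n\ge 2$, again with $\eta_{\chi_{\mathrm{hyp}}}=-d/(m-1)$. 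If $-d/(m-1)$ is in fact the least eigenvalue $\tau$, then the ratio bound
\[
\alpha(\Gamma_G)\le\frac{-|G|\,\tau}{d-\tau}
\]
evaluates to exactly $|G|/m=|G_\alpha|$, which matches the size of the two families and forces $\alpha(\Gamma_G)=|G_\alpha|$. The entire upper bound thus reduces to the spectral-gap statement $\eta_\chi\ge-d/(m-1)$ for every $\chi\in\Irr(G)$.

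For the classification I would exploit the tightness of the ratio bound: if $S$ is a maximum coclique with $0/1$ characteristic vector $v_S$, then $v_S-\frac{|S|}{|G|}\one$ lies in the sum of the eigenspaces of $\Gamma_G$ for $\tau$. It then remains to show, first, that the only irreducible characters with $\eta_\chi=\tau$ are $\chi_{\mathrm{pt}}$ and $\chi_{\mathrm{hyp}}$, so that this eigenspace is spanned by the point- and hyperplane-permutation modules; and second, that a $0/1$ vector of weight $|G|/m$ lying in the span of the constants together with those two modules must be the indicator of a coset of a point-stabilizer or of a hyperplane-stabilizer. The second step is a module-rigidity argument in the spirit of the $\Sym(n)$ and $\PGL_2(q)$ treatments of \cite{GoMe,KaPa}.

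The hard part will be the spectral gap. The derangement classes of $G$ are exactly those whose representing matrices have characteristic polynomial with no root in $\mathbb{F}_q$, and proving $\eta_\chi\ge-d/(m-1)$ amounts to controlling the sums $\sum_{g\in D}\chi(g)$ uniformly over $\Irr(G)$. These characters fall, by Deligne--Lusztig and Harish--Chandra theory, into semisimple, unipotent, and cuspidal families, and the needed estimate is a delicate alternating sum for which uniform control in $n$ is the missing ingredient. For $n=2$ the character table of $\PGL_3(q)$ is fully explicit and the families are few, so the gap can be verified directly---this is the route the present paper takes---whereas for general $n$ one must rule out the possibility that a cuspidal or mixed character dips below $-d/(m-1)$; should that occur, the plain ratio bound would have to be replaced by a weighted or association-scheme refinement. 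I therefore expect the global estimate on Deligne--Lusztig characters evaluated at derangement classes to be the principal obstacle, with the module-rigidity step a genuine but secondary difficulty.
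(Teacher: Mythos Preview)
The statement is a \emph{conjecture}; the paper does not prove it for general $n$, only the special case $n=2$ (Theorem~\ref{main:thrm}). So there is no full proof to compare against, only the $n=2$ argument, and your proposal should be read as a plan rather than a proof.

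That plan contains a concrete error which would derail the classification step already for $n=2$. You claim that the action on hyperplanes produces a second irreducible constituent $\chi_{\mathrm{hyp}}$ distinct from $\chi_{\mathrm{pt}}$ once $n\ge2$. This is false, and in fact contradicted by your own observation two sentences earlier: since $M$ and $M^T$ share a characteristic polynomial, every $g\in G$ fixes the same number of points as hyperplanes, so the two permutation characters are \emph{identical} as class functions and $\chi_{\mathrm{pt}}=\chi_{\mathrm{hyp}}$ in $\Irr(G)$ (the paper records this as~\eqref{equation:9}). Consequently the $\tau$-eigenspace does not split into a ``point piece'' and a ``hyperplane piece''; the cosets of point-stabilizers and of hyperplane-stabilizers all live in the single isotypic component for $\one+\chi_{\mathrm{pt}}$, and indeed the column space of the point-coset matrix already contains every hyperplane-coset indicator. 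The rigidity problem is therefore harder than your sketch suggests: one must locate the $0/1$ vectors of the correct weight inside a single module, with no representation-theoretic separation between the two families available.

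The paper's $n=2$ argument handles this by a device your outline does not anticipate. Rather than working only in the $\tau$-eigenspace, it restricts the point-coset matrix $A$ to its derangement rows to obtain $M$ and computes the right kernel of $M$ explicitly (Proposition~\ref{lemma:4}); this kernel turns out to be strictly larger than $\ker A$ by a subspace indexed by the \emph{lines} of $\PG_2(q)$, and it is precisely these extra kernel directions $e_{\bar\ell}-e_\ell$ that encode the hyperplane-coset solutions. The spectral gap you flag as the main obstacle is, for $n=2$, disposed of by direct inspection of Steinberg's tables (Lemma~\ref{eigenvaluesPGL3}); the real work is the kernel computation in Section~\ref{sec:Proof2}, carried out via eigenvectors of $M^TM$ and explicit derangement counts depending on the projective configuration of four points.
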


This conjecture is appealing in that, like the other versions of the
Erd\H{o}s-Ko-Rado theorem, it characterizes the independent
sets of maximal size in the derangement graph of a group. However, in this case there are
two distinct families of independent sets of maximal size -- this is similar to the
situation for $k$-subsets from an $n$-set in which any two subsets
have at least $t$ elements in common (see~\cite{MR1429238} for more details).

The main result of this paper gives the first important contribution
towards a proof of this conjecture.
\begin{theorem}\label{main:thrm}
  The independent sets of maximal size in the derangement graph of
  $\PGL_{3}(q)$ acting on the points of the projective plane
  $\PG_2(q)$ are exactly the cosets of the stabilizer of a point and
  the cosets of the stabilizer of a line.
\end{theorem}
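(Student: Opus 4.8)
The plan has four stages: bound the size of an independent set, determine the least eigenvalue of the derangement graph, use the ratio bound to place the characteristic vector of an extremal independent set inside a small module, and finish by a combinatorial argument on the incidence structure of $\PG_2(q)$.

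First I would pin down the size. The derangement graph $\Gamma:=\Gamma_{\PGL_3(q)}$ is the Cayley graph on $\PGL_3(q)$ whose connection set is the derangements; an element is a derangement on the points of $\PG_2(q)$ exactly when a lift to $\GL_3(q)$ has no eigenvalue in $\mathbb{F}_q$, i.e.\ has irreducible characteristic polynomial, so the derangements are precisely the non-identity elements of the Singer cyclic subgroups. A Singer subgroup $C$ has order $q^2+q+1$ and acts regularly on $\PG_2(q)$, hence is a clique of $\Gamma$ of that size; since $\Gamma$ is vertex-transitive, the clique--coclique bound gives that any independent set has size at most $|\PGL_3(q)|/(q^2+q+1)$. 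Cosets of a point-stabilizer and cosets of a line-stabilizer are independent sets of exactly this size, so these are independent sets of maximal size, and it remains to prove there are no others. A short computation at this point also gives that the number of derangements is $d=\tfrac{q(q+1)}{3(q^2+q+1)}|\PGL_3(q)|$.

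Next, and this is the technical heart, I would determine the least eigenvalue of $\Gamma$. Since $\Gamma$ is a normal Cayley graph, its eigenvalues are $\lambda_\chi=\chi(1)^{-1}\sum_{g\text{ derangement}}\chi(g)$ for $\chi\in\Irr(\PGL_3(q))$, with $\lambda_{\one}=d$. The permutation characters on points and on lines each have the shape $\one+\chi$ with $\chi$ irreducible of degree $q(q+1)$; write $\chi_{\mathrm{pt}}$ and $\chi_{\mathrm{line}}$ for these constituents (they are distinct, being interchanged by the duality automorphism). A derangement on points is also a derangement on lines, so $\chi_{\mathrm{pt}}(g)=\chi_{\mathrm{line}}(g)=-1$ on every derangement, whence $\lambda_{\chi_{\mathrm{pt}}}=\lambda_{\chi_{\mathrm{line}}}=-d/(q(q+1))$; a direct check shows the Hoffman (ratio) bound with this value as least eigenvalue equals exactly $|\PGL_3(q)|/(q^2+q+1)$. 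So the goal is to prove that $-d/(q(q+1))$ really is the minimum of the $\lambda_\chi$ and that it is attained only at $\chi_{\mathrm{pt}}$ and $\chi_{\mathrm{line}}$. The derangements fall into a small number of conjugacy classes (parameterised, via the Singer torus, by Frobenius orbits of certain irreducible cubics over $\mathbb{F}_q$) and the character table of $\PGL_3(q)$ is explicit, so I would work through the families of irreducible characters --- the unipotent characters, those parabolically induced from the two maximal parabolics, and the cuspidal (Deligne--Lusztig) characters --- and estimate $\lambda_\chi$ in each case: for the parabolically induced characters these sums are elementary, while for the cuspidal characters they reduce to sums of roots of unity and Gauss-type sums over $\mathbb{F}_{q^3}^\times$ that can be bounded. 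I expect this case analysis, perhaps together with a separate treatment of a few small $q$ by direct computation, to be the main obstacle: the inequality $\lambda_\chi>-d/(q(q+1))$ for every other nontrivial $\chi$ has to be squeezed out of the explicit character values.

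Granting that, equality holds in the ratio bound, so for any independent set $S$ of maximal size the characteristic vector $v_S$ satisfies $v_S-\tfrac{|S|}{|\PGL_3(q)|}\one\in U_{\chi_{\mathrm{pt}}}\oplus U_{\chi_{\mathrm{line}}}$, the sum of the corresponding isotypic components of the regular module. Using that $v_S$ is $\{0,1\}$-valued, that $|S|=|\PGL_3(q)|/(q^2+q+1)$, and that $S$ is independent, I would argue --- the most delicate step after the eigenvalue computation --- that $v_S$ must in fact lie in the much smaller space spanned by the characteristic vectors of point-stabilizer and line-stabilizer cosets; equivalently $v_S(x)=\phi(P_0^{\,x})+\psi(\ell_0^{\,x})$ for a fixed point $P_0$, a fixed line $\ell_0$ with $P_0\notin\ell_0$, and functions $\phi$ on points and $\psi$ on lines. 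Since $x\mapsto(P_0^{\,x},\ell_0^{\,x})$ maps $\PGL_3(q)$ onto the anti-flags of $\PG_2(q)$ with constant fibres, the function $(P,\ell)\mapsto\phi(P)+\psi(\ell)$ is then $\{0,1\}$-valued on anti-flags and takes the value $1$ on exactly $q^2$ of them. The proof concludes with the elementary statement that such a pair $(\phi,\psi)$ must, up to the substitution $\phi\mapsto\phi+c$, $\psi\mapsto\psi-c$, be either the indicator of a single point together with $\psi\equiv 0$, or the indicator of a single line together with $\phi\equiv 0$: because any two points lie off a common line and any two lines miss a common point, both $\phi$ and $\psi$ take values in an interval of length $1$, and a short extremal argument pins down the level sets, forcing $S$ to be a coset of a point-stabilizer or of a line-stabilizer.
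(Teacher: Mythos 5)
Your outline founders in three places. First, $\chi_{\mathrm{pt}}$ and $\chi_{\mathrm{line}}$ are not distinct irreducible characters: by the point--line duality of $\PG_2(q)$, every element of $\PGL_3(q)$ fixes as many lines as points, so the two permutation characters coincide as class functions and share the single nontrivial constituent $\chi_0$ of degree $q^2+q$. The relevant eigenspace is therefore not a sum of two isotypic components of dimension $(q^2+q)^2$ each but the single component $U_{\chi_0}$ --- and this is precisely why the problem is hard, since the characteristic vectors of line-stabilizer cosets already lie inside $\langle\one\rangle\oplus U_{\chi_0}$, the same space spanned by the point-stabilizer cosets. Second, your hoped-for strict inequality $\lambda_\chi>-d/(q(q+1))$ for every other nontrivial $\chi$ is false when $3\mid q-1$: the two non-principal linear characters of $\PGL_3(q)$ also attain the minimum eigenvalue (this follows from counting the derangements inside $\PSL_3(q)$). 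So the ratio bound places $v_S$ in a space that is two dimensions too large, and a separate argument is needed to kill the contribution of these linear characters --- for instance by bounding independent sets in $\Gamma_{\PSL_3(q)}$ to show that $S$ meets the three cosets of $\PSL_3(q)$ equally, forcing $\xi(S)=0$.

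Third, and most seriously, the step you call ``the most delicate'' is not a step but the bulk of the theorem. Equality in the ratio bound puts $v_S$ in $\langle\one\rangle\oplus U_{\chi_0}$, of dimension $(q^2+q)^2+1$; you must descend to functions of the form $\phi(P_0^{\,x})+\psi(\ell_0^{\,x})$, a space of dimension about $2(q^2+q)$. The only available leverage is that $v_S$ is $\{0,1\}$-valued and vanishes on the derangements, and extracting the descent from the vanishing condition requires computing the exact kernel of the derangement block of the point-pair incidence matrix: concretely, counting for each geometric configuration of a quadruple $(\alpha,\beta,\gamma,\delta)$ the derangements sending $\alpha\mapsto\beta$ and $\gamma\mapsto\delta$, and then producing enough eigenvectors of the resulting Gram matrix to pin its nullity down to exactly $4(q^2+q)+1$. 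Nothing in your outline supplies this. Your Singer-cycle clique--coclique observation is a clean alternative route to the size bound, and your closing anti-flag argument for $\phi+\psi$ would indeed finish the proof once the descent is established, but as written the proposal omits the core of the argument.
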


In particular, Theorem~\ref{main:thrm}
settles~\cite[Conjecture~$2$]{KaPa} for $n=2$. (The case $n=1$ was
already settled in~\cite[Theorem~$1$]{KaPa}.)  Our proof uses the
method developed in~\cite{GoMe} and hence we make use of some
information on the character theory of $\PGL_3(q)$. Some of our
arguments in the proof of Theorem~\ref{main:thrm} work for any $n\geq
2$, but at some critical juncture we either need some geometric
properties of $\PG_2(q)$ or some algebraic properties of the
irreducible characters of $\PGL_3(q)$. Therefore, we have
decided to present all of our results uniformly with $n=2$. However,
we hope that some of the ideas in this paper might help to give a
proof of~\cite[Conjecture~$2$]{KaPa} for every $n$.

To some extent, the proof of Theorem~\ref{main:thrm} is more involved than the proof of the Erd\H{o}s-Ko-Rado theorem for $\PGL_2(q)$~\cite{KaPa} and for $\Sym(n)$~\cite{GoMe}. In our opinion, this increase of complexity is natural and typical of when the independent sets of maximal size are not  all of the same type.

\section{Notation}\label{sub:1}

We let $q$ be a power of a prime and we  denote by $\mathtt{GF}(q)$ a field of size $q$, and by $\mathtt{GF}(q)^3$
the $3$-dimensional vector space over $\mathtt{GF}(q)$ of row
vectors with basis $(\varepsilon_1,\varepsilon_2,\varepsilon_3)$. The (desarguesian) projective plane $\PG_2(q)$ is the pair $(\mathcal{P},\mathcal{L})$, where the elements of $\mathcal{P}$, respectively $\mathcal{L}$, are the
$1$-dimensional, respectively $2$-dimensional, subspaces of
$\mathtt{GF}(q)^3$. The elements of $\mathcal{P}$ are called points and the elements of $\mathcal{L}$ are called lines of the plane $\PG_2(q)$. For denoting the points we will use greek letters, and for the lines roman letters.

Given two distinct points $\alpha$ and $\alpha'$,
we denote by $\alpha\vee \alpha'$ the line spanned by $\alpha$ and
$\alpha'$, that is, the line of $\PG_2(q)$ containing both
$\alpha$ and $\alpha'$. Moreover, given two distinct lines $\ell$ and
$\ell'$, we denote by $\ell\wedge \ell'$ the point of the intersection
of $\ell$ and $\ell'$.

We denote by $G$ the permutation group $\PGL_3(q)$ in its action on
$\mathcal{P}$, and by $\mathcal{D}$ the set of derangements of $G$.  As is
standard, the subgroup of $G$ that fixes the point $\alpha$ is denoted
by $G_\alpha$ and, similarly, the subgroup that fixes the line $\ell$
is denoted by $G_\ell$.

As usual, $\mathbb{C}[G]$ is the group algebra of $G$ over the complex numbers
$\mathbb{C}$. We  only need the vector space structure on
$\mathbb{C}[G]$: a basis for $\mathbb{C}[G]$ is indexed by the group
elements $g\in G$. 
Given a subset $S$ of $G$, we denote by
$\chi_S\in \mathbb{C}[G]$ the characteristic vector of $S$, that is,
$(\chi_S)_g=1$ if $g\in S$, and $(\chi_S)_g=0$ otherwise. The all ones
vector  is denoted by $\one$ (the length of the vector will be
clear by context).

There exists a natural duality~\cite{Cameron} between the points and the
lines of $\PG_2(q)$ and  this duality is preserved by
$G=\mathrm{PGL}_3(q)$. Hence, for each $g\in G$, the number of
elements of $\mathcal{P}$ fixed by $g$ coincides with the number of
elements of $\mathcal{L}$ fixed by $g$. In particular, we have the
equality
\begin{equation}\label{equation:9}
\sum_{\alpha\in \mathcal{P}}\chi_{G_\alpha}=\sum_{\ell\in \mathcal{L}}\chi_{G_\ell}.
\end{equation}

\section{Proof of Theorem~\ref{main:thrm}}\label{sec:eigenvalues}

To prove Theorem~\ref{main:thrm}, we will show that the characteristic
vector for every independent set of maximal size in the derangement graph of $G$ is a
linear combination of a specific set of vectors. Then we will show
that the only such possible linear combinations are the characteristic
vectors for either the cosets of the stabilizer of a point or the
cosets of the stabilizer of a line.

Let $A$ be the $\{0,1\}$-matrix where the rows are indexed by the
elements of $G$, the columns are indexed by the ordered pairs of
points from $\mathcal{P}$ and $A_{g,(\alpha,\beta)}=1$ if and only
if $\alpha^g=\beta$. In particular, $A$ has $|G|=q^3(q^3-1)(q^2-1)$ rows and
$|\mathcal{P}|^2=(q^2+q+1)^2$ columns.

We fix a particular ordering of the rows of $A$ so that the first rows
are labeled by the derangements $\mathcal{D}$ of $G$, and the
remaining rows are labeled by the elements of
$G\setminus\mathcal{D}$.  With this ordering, we get
that $A$ is the following block matrix
\[
A=
\begin{pmatrix}
M\\
B\\
\end{pmatrix}.
\] 
In particular, the rows of the submatrix $M$ are labeled by elements
of $\mathcal{D}$ and the columns of $M$ are labeled by pairs of
elements of $\mathcal{P}$.

Since the columns of $A$ have coordinates indexed by the elements of
$G$, we can view each column of $A$ as an element of $\mathbb{C}[G]$.
Similarly, the rows of $A$ can be viewed as characteristic vectors in
a suitable vector space.

Define $V$ to be the $\mathbb{C}$-vector space whose
basis consists of all $e_{\alpha\beta}$, where $(\alpha,\beta)$ is an
ordered pair of elements of $\mathcal{P}$. Propositions~\ref{rank} and
\ref{lemma:4} will give the right kernels of $A$ and $M$,
respectively. To describe these kernels, we define three families of
vectors from $V$. Two of these families are indexed by the points
$\alpha\in \mathcal{P}$ and the third by the lines $\ell \in
\mathcal{L}$:
\begin{align}\label{eq:ealpha}
e_\alpha^1=\sum_{\beta\in\mathcal{P}}e_{\alpha\beta}, \qquad 
e_{\alpha}^2=\sum_{\beta\in\mathcal{P}}e_{\beta\alpha}, \qquad 
e_{\ell}=\sum_{\beta,\beta'\in\ell}e_{\beta\beta'}.
\end{align}

We now state three pivotal properties of the matrices $A$ and $M$; we
postpone the technical proofs of all three properties to
Sections~\ref{Proof} and~\ref{sec:Proof2}.

\begin{proposition}\label{prop:matrixA}
  If $S$ is an independent set of maximal size of $\Gamma_G$, then
  $\chi_S$ is a linear combination of the columns of $A$.
\end{proposition}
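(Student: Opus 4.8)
The plan is to combine two ingredients: the Hoffman (ratio) bound for the derangement graph together with the structure of its least eigenspace, and a description of $\colsp(A)$ as a module for $G\times G$ acting on $\mathbb{C}[G]$ by left and right translation. Write $n=q^2+q+1=|\mathcal{P}|$ and $d=|\mathcal{D}|$, so that $\Gamma_G$ is a $d$-regular normal Cayley graph on $G$ with connection set $\mathcal{D}$, a union of conjugacy classes. Since $G$ acts $2$-transitively on $\mathcal{P}$, the permutation module $\mathbb{C}[\mathcal{P}]$ splits as $\one\oplus U$, where $U$ affords the unique irreducible constituent $\chi_{\mathrm{st}}\in\Irr(G)$ of degree $n-1=q^2+q$. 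The eigenvalues of $\Gamma_G$ are the numbers $\eta_\psi=\tfrac1{\psi(1)}\sum_{g\in\mathcal{D}}\psi(g)$ for $\psi\in\Irr(G)$, and since $\chi_{\mathrm{st}}=\fix-1$ takes the value $-1$ on every derangement we obtain, at no cost, $\eta_{\chi_{\mathrm{st}}}=-d/(n-1)$.

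First I would apply the Hoffman bound. Granting that $\tau:=-d/(n-1)$ is the \emph{least} eigenvalue of $\Gamma_G$, the bound gives $\alpha(\Gamma_G)\le|G|\cdot\tfrac{-\tau}{d-\tau}=|G|/n$, and this is attained by every coset of a point stabiliser $G_\alpha$, which is an independent set of size $|G_\alpha|=|G|/n$. Hence $|S|=|G|/n$, and the equality case of the Hoffman bound forces
\[
\chi_S-\frac{|S|}{|G|}\,\one\ \in\ E_\tau ,
\]
where $E_\tau$ is the $\tau$-eigenspace of the adjacency matrix of $\Gamma_G$. Since that matrix commutes with both the left and the right regular representation of $G$ (the connection set being inverse- and conjugation-closed), $E_\tau$ is a $G\times G$-submodule of $\mathbb{C}[G]$; under the Wedderburn decomposition $\mathbb{C}[G]=\bigoplus_{\psi\in\Irr(G)}\mathrm{End}(U_\psi)$ it is exactly $\bigoplus_{\psi:\eta_\psi=\tau}\mathrm{End}(U_\psi)$, and it contains $\mathrm{End}(U_{\chi_{\mathrm{st}}})$ because $\eta_{\chi_{\mathrm{st}}}=\tau$.

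Next I would identify $\colsp(A)$ inside $\mathbb{C}[G]$. The column of $A$ indexed by $(\alpha,\beta)$ is the characteristic vector of the right coset $G_\alpha g$ for any $g$ with $\alpha^g=\beta$; summing these columns over $\beta\in\mathcal{P}$ with $\alpha$ fixed gives $\one\in\colsp(A)$. Right translation permutes the columns, and left translation by $x$ carries $\chi_{G_\alpha g}$ to $\chi_{G_{\alpha^{x^{-1}}}xg}$, again a column of $A$; hence $\colsp(A)$ is a $G\times G$-submodule of $\mathbb{C}[G]$, and therefore a direct sum of whole blocks $\mathrm{End}(U_\psi)$. The block for $\psi$ occurs precisely when the image of $\chi_{G_\alpha}=\sum_{h\in G_\alpha}h$ in $\mathrm{End}(U_\psi)$ — a nonzero scalar times the projection onto the $G_\alpha$-fixed subspace of $U_\psi$ — is nonzero, i.e.\ precisely when $\psi$ is a constituent of $\mathrm{Ind}_{G_\alpha}^G\one=\mathbb{C}[\mathcal{P}]$, i.e.\ precisely for $\psi\in\{\one,\chi_{\mathrm{st}}\}$. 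Thus $\colsp(A)=\mathrm{End}(U_{\one})\oplus\mathrm{End}(U_{\chi_{\mathrm{st}}})$ (of dimension $1+(q^2+q)^2$), so as soon as we know $E_\tau=\mathrm{End}(U_{\chi_{\mathrm{st}}})$ we get $\langle\one\rangle+E_\tau\subseteq\colsp(A)$, and combining this with the displayed containment yields $\chi_S\in\colsp(A)$. (By the self-duality of~\eqref{equation:9} the same argument applied to a line stabiliser $G_\ell$, using $\mathbb{C}[\mathcal{L}]\cong\mathbb{C}[\mathcal{P}]$, puts every coset characteristic vector $\chi_{G_\ell g}$ into $\colsp(A)$ as well, consistent with the final theorem.)

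All of the above is formal; the one genuinely arithmetic input, which I expect to be the main obstacle and which I would establish in the later sections from the character table of $\PGL_3(q)$, is that $-d/(n-1)$ is the smallest of the numbers $\eta_\psi$ and that $\chi_{\mathrm{st}}$ is the only nontrivial irreducible character attaining it — equivalently, $\tfrac1{\psi(1)}\sum_{g\in\mathcal{D}}\psi(g)>-d/(n-1)$ for every $\psi\in\Irr(G)\setminus\{\one,\chi_{\mathrm{st}}\}$. Since $\sum_{g\in G}\psi(g)=0$ for $\psi\neq\one$, this is a bound on the character sums $\sum_{g\notin\mathcal{D}}\psi(g)$ over the conjugacy classes of elements of $G$ having a fixed point, and it is here — in controlling $\psi(g)$ on those classes against $\psi(1)$ and $d$ — that the specific group-theoretic and geometric structure of $\PGL_3(q)$, rather than abstract representation theory, is needed.
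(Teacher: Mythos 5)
Your overall architecture (ratio bound puts $\chi_S-\tfrac{|S|}{|G|}\one$ into the least eigenspace; the column space of $A$ is the $G\times G$-submodule of $\mathbb{C}[G]$ corresponding to the constituents of $\mathbb{C}[\mathcal{P}]$, hence contains $\langle\one\rangle$ plus the block of the degree-$(q^2+q)$ character) is exactly the paper's strategy, and your module-theoretic identification of $\colsp(A)$ is a clean substitute for the paper's rank computation via $A^TA$. But the single fact you defer to "the character table" --- that $\chi_{\mathrm{st}}$ is the \emph{only} nontrivial irreducible character attaining the least eigenvalue $\tau=-d/(n-1)$ --- is \emph{false} whenever $3\mid q-1$. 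In that case the two non-principal linear characters $\xi,\xi^2$ of $G/\PSL_3(q)$ also satisfy $\xi(\mathcal{D})/\xi(1)=\tau$ (this is Lemma~\ref{eigenvaluesPGL3}(3)), so $E_\tau$ has dimension $(q^2+q)^2+2$, strictly larger than the block $\mathrm{End}(U_{\chi_{\mathrm{st}}})$, and your displayed containment no longer lands inside $\colsp(A)$. Your proof is therefore complete only for $\Gcd(3,q-1)=1$.

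Closing this gap is not a routine character estimate; it needs a separate combinatorial input. The paper shows that for a maximum independent set $S$ the three intersections $S\cap\PSL_3(q)x^i$ all have the same size, because each $S_ix^{-i}$ is an independent set in $\Gamma_{\PSL_3(q)}$ and a second application of the ratio bound (to $\PSL_3(q)$, with its own derangement count and least eigenvalue, Corollary~\ref{indysetPSL3}) caps each at $|S|/3$. Equidistribution over the cosets forces $\xi(S)=\xi^2(S)=0$, i.e.\ $\chi_S$ is orthogonal to the two extra one-dimensional blocks of $E_\tau$, and only then does $\chi_S$ fall into $\colsp(A)$. So your plan needs, in addition to the eigenvalue computation for $\PGL_3(q)$, the analogous computation for $\PSL_3(q)$ and this coset-counting step; as written, the assertion you flag as "the main obstacle" cannot be established because it is not true for a third of all $q$. (A small further caveat: the paper also excludes $q=2$, where the eigenvalue multiplicities behave differently, and settles that case by machine.)
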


\begin{proposition}\label{rank}
  The matrix $A$ has rank $(|\mathcal{P}|-1)^2+1$.  Moreover, given
  $\bar{\alpha}\in\mathcal{P}$, the subspace
\[
\langle e_{\bar{\alpha}}^1-e_{\alpha}^1,e_{\bar{\alpha}}^2-e_{\alpha}^2\mid \alpha\in\mathcal{P}\rangle
\]
of $V$ is the right kernel of $A$.
\end{proposition}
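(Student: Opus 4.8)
The plan is to compute the rank of $A$ by analyzing its columns as elements of $\mathbb{C}[G]$, and then to identify the right kernel as a subspace of $V$ of the complementary dimension. First I would observe that the column of $A$ indexed by the ordered pair $(\alpha,\beta)$ is precisely the characteristic vector $\chi_{T_{\alpha,\beta}}$, where $T_{\alpha,\beta}=\{g\in G : \alpha^g=\beta\}$; this set is either empty (impossible here, since $G$ is transitive on $\mathcal{P}$) or a right coset of the point stabilizer $G_\alpha$. Since $G$ is in fact $2$-transitive on $\mathcal{P}$, the span of all these columns is a well-understood permutation-module object: the column span is the image of the map $V\to\mathbb{C}[G]$, $e_{\alpha\beta}\mapsto\chi_{T_{\alpha,\beta}}$, and by Frobenius reciprocity / the standard theory of the permutation character this image has dimension equal to the number of $G$-orbits on ordered pairs from $\mathcal{P}\times\mathcal{P}$ counted with multiplicity — more precisely, one shows $\dim(\colsp A)$ equals the rank of the $G$-module generated by the induced module $\mathbb{C}[G/G_\alpha]$ tensored appropriately. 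Concretely, since the action on $\mathcal{P}$ is $2$-transitive, the permutation character decomposes as $\one+\psi$ with $\psi$ irreducible of degree $|\mathcal{P}|-1$, and the column space of $A$ is isomorphic as a $G$-module to $\mathrm{End}_{\mathbb{C}}$ of the permutation module, which has dimension $1^2+(|\mathcal{P}|-1)^2=(|\mathcal{P}|-1)^2+1$. This gives the rank.

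Next I would pin down the right kernel. Since $\mathrm{rank}(A)=(|\mathcal{P}|-1)^2+1$ and $\dim V=|\mathcal{P}|^2$, the right kernel has dimension $|\mathcal{P}|^2-(|\mathcal{P}|-1)^2-1=2(|\mathcal{P}|-1)$. So it suffices to exhibit $2(|\mathcal{P}|-1)$ linearly independent vectors of $V$ that are killed by $A$. The natural candidates are exactly those listed: for each $\alpha\in\mathcal{P}$, the vectors $e_{\bar\alpha}^1-e_\alpha^1$ and $e_{\bar\alpha}^2-e_\alpha^2$. To see $A(e_{\bar\alpha}^1-e_\alpha^1)=0$, note that $Ae_\alpha^1=\sum_{\beta}\chi_{T_{\alpha,\beta}}$; as $\beta$ ranges over $\mathcal{P}$, the sets $T_{\alpha,\beta}$ partition $G$, so $Ae_\alpha^1=\one$ independently of $\alpha$, hence the difference is killed. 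Similarly $Ae_\alpha^2=\sum_{\beta}\chi_{T_{\beta,\alpha}}=\one$ since for each $g$ there is exactly one $\beta$ with $\beta^g=\alpha$, so $A(e_{\bar\alpha}^2-e_\alpha^2)=0$ as well. Linear independence of the $2(|\mathcal{P}|-1)$ displayed vectors is a short combinatorial check: the $e_\alpha^1$ together with the $e_\alpha^2$ span a $(2|\mathcal{P}|-1)$-dimensional space in $V$ (they share only the common relation $\sum_\alpha e_\alpha^1=\sum_\alpha e_\alpha^2$, both equal to the all-ones vector of $V$), and quotienting out by $e_{\bar\alpha}^1$ and $e_{\bar\alpha}^2$ reduces this to dimension $2|\mathcal{P}|-1-2=2(|\mathcal{P}|-1)-1$... so one must be slightly careful: the span of $\{e_{\bar\alpha}^1-e_\alpha^1\}\cup\{e_{\bar\alpha}^2-e_\alpha^2\}$ has dimension exactly $2(|\mathcal{P}|-1)$ precisely because the two families are independent of each other (one lives in the "row-constant" directions, the other in "column-constant" directions, and these intersect only in the all-ones direction, which is excluded by the differences). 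This dimension count matches, so the displayed subspace is the entire right kernel.

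The main obstacle, in my view, is the rank computation itself — specifically, justifying rigorously that $\dim(\colsp A)=(|\mathcal{P}|-1)^2+1$ rather than something larger. The inequality $\dim(\colsp A)\le(|\mathcal{P}|-1)^2+1$ follows immediately once we exhibit the $2(|\mathcal{P}|-1)$ kernel vectors above (so $\mathrm{rank}(A)\le|\mathcal{P}|^2-2(|\mathcal{P}|-1)=(|\mathcal{P}|-1)^2+1$), so the real content is the reverse inequality $\mathrm{rank}(A)\ge(|\mathcal{P}|-1)^2+1$. For this I would use representation theory: view $A$ as a $G$-equivariant map (with $G$ acting on the right on $\mathbb{C}[G]$ by translation and on $V$ via its action on ordered pairs). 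The image is a $G$-submodule of $\mathbb{C}[G]$, and the domain $V\cong\mathbb{C}[G/G_\alpha]\otimes\mathbb{C}[\mathcal{P}]$ decomposes, using $2$-transitivity, into irreducibles whose multiplicities can be read off; then Schur's lemma and the fact that each column $\chi_{T_{\alpha,\beta}}$ is a genuine nonzero coset vector forces the image to contain the full isotypic components corresponding to $\one$ and to $\psi$, each appearing with the right multiplicity to total $(|\mathcal{P}|-1)^2+1$. Alternatively — and this may be cleaner to write — one computes $\mathrm{rank}(A)=\mathrm{rank}(A^\top A)$ directly: the $(|\mathcal{P}|^2)\times(|\mathcal{P}|^2)$ Gram matrix $A^\top A$ has entries $(A^\top A)_{(\alpha,\beta),(\gamma,\delta)}=|T_{\alpha,\beta}\cap T_{\gamma,\delta}|$, which by $2$-transitivity depends only on the "pattern" of coincidences among $\alpha,\beta,\gamma,\delta$, making $A^\top A$ a matrix in the Bose–Mesner algebra of the (Hamming-like) association scheme on $\mathcal{P}\times\mathcal{P}$; its eigenvalues, and hence its rank, are then computed explicitly, yielding rank $(|\mathcal{P}|-1)^2+1$. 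I expect the writeup to take the association-scheme route, as it keeps everything concrete and elementary.
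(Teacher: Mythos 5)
Your second, preferred route is exactly the paper's proof: it computes the entries of $A^TA$ using $2$-transitivity, recognizes $A^TA$ as $(q-1)(q^2-1)q^3 I+(q-1)^2q^2\,(J-I)\otimes(J-I)$, reads off the spectrum of this Bose--Mesner-type matrix to get rank $(|\mathcal{P}|-1)^2+1$, and then exhibits the $2(|\mathcal{P}|-1)$ linearly independent kernel vectors just as you do. The only step you leave unexecuted --- the explicit eigenvalues of the Gram matrix --- is routine given the tensor structure you correctly identify, so the plan is sound and essentially coincides with the paper's argument.
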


\begin{proposition}\label{lemma:4}
Given $\bar{\alpha}\in\mathcal{P}$ and $\bar{\ell}\in\mathcal{L}$, the
  subspace
\[
\langle e_{\alpha\alpha},e_{\bar{\alpha}}^1-e_{\alpha}^1,e_{\bar{\alpha}}^2-e_{\alpha}^2,e_{\bar{\ell}}-e_\ell\mid \alpha \in\mathcal{P},\ell\in\mathcal{L}\rangle
\]
of $V$ is the right kernel of  $M$. 
\end{proposition}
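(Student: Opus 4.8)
The plan is to prove the two inclusions separately; throughout write $K$ for the displayed subspace and put $m=|\mathcal P|=|\mathcal L|=q^2+q+1$. First I would check that $K$ is contained in the right kernel of $M$, treating the three families of generators in turn. The $(\alpha,\alpha)$-column of $M$ is zero because a derangement has no fixed point, so $Me_{\alpha\alpha}=0$. For a derangement $g$ every point has a unique image and a unique preimage, so $Me_\alpha^1=Me_\alpha^2=\one$ and hence $M(e_{\bar\alpha}^1-e_\alpha^1)=M(e_{\bar\alpha}^2-e_\alpha^2)=0$ (alternatively, these vectors lie in the right kernel of $A$ by Proposition~\ref{rank}, which is contained in the right kernel of $M$). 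The one genuinely new point is that $Me_\ell=\one$ for every line $\ell$: the $g$-coordinate of $Me_\ell$ is $|\{\beta\in\ell:\beta^g\in\ell\}|$, and a derangement $g$ --- having no fixed point, hence by the duality~\eqref{equation:9} no fixed line --- satisfies $\ell^g\neq\ell$, so that $\ell\cap\ell^g$ is a single point $\pi$; the unique point of $\ell$ sent into $\ell$ is then $\pi^{g^{-1}}$, which lies on $\ell$ precisely because $\pi\in\ell^g$. Thus $M(e_{\bar\ell}-e_\ell)=0$ too.

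For the reverse inclusion I would begin by verifying that the listed generators are linearly independent, so that $\dim K=m+3(m-1)=4m-3$; the only nonroutine step here is the combinatorial lemma that a function $c$ on the lines of $\PG_2(q)$ satisfying $c(\gamma\vee\delta)=a(\gamma)+b(\delta)$ for all distinct points $\gamma,\delta$ and some functions $a,b$ on the points must be constant --- because $a$ and $b$ are then forced to be constant along each line (every line carrying $q+1\geq3$ points) and hence globally constant (the collinearity graph of $\PG_2(q)$ being connected). Since $K\subseteq\ker M$ is already known, it remains to prove the matching lower bound $\mathrm{rank}(M)\geq m^2-(4m-3)$, i.e.\ that the right kernel of $M$ is no larger than $K$. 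For this I would pass to the quotient of $V$ by $\ker A$ and by the diagonal subspace $\langle e_{\alpha\alpha}\mid\alpha\in\mathcal P\rangle$, both of which are contained in $K$; in this quotient (of dimension $(m-1)(m-2)$) lying in the right kernel of $M$ amounts to one linear condition per derangement $g\in\mathcal D$, namely that the surviving coordinates summed along the graph $\{(\alpha,\alpha^g):\alpha\in\mathcal P\}$ vanish, and the task is to show that these conditions cut the quotient down to the $(m-1)$-dimensional image of $\langle e_{\bar\ell}-e_\ell\mid\ell\in\mathcal L\rangle$.

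The hard part --- and I expect it to be the main obstacle --- is precisely this last step: showing that the derangements of $\PGL_3(q)$ impose enough independent conditions, equivalently that the characteristic vectors of their graphs span a subspace of $\mathbb{C}[\mathcal P\times\mathcal P]$ of the predicted dimension $m^2-4m+3$. This is where I would have to bring in either explicit geometric constructions in $\PG_2(q)$ --- exhibiting, for each dimension still missing, derangements with a prescribed action on small configurations of points and lines --- or the character theory of $\PGL_3(q)$, decomposing the conjugation module $\mathbb{C}[\mathcal P]\otimes\mathbb{C}[\mathcal P]$ into irreducibles and identifying the constituents carried by the graphs of derangements (equivalently, understanding the permutation character of $G$ on $\mathcal P$ restricted to the set of derangements). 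By contrast, the inclusion $K\subseteq\ker M$ and the dimension count for $K$ are essentially routine.
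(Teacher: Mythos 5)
Your first inclusion and your dimension count for $K$ are correct and essentially match the paper's Lemma~\ref{lemma:-1}: the diagonal columns of $M$ vanish, $Me_\alpha^1=Me_\alpha^2=\one$ since a derangement is a bijection, and $Me_\ell=\one$ because a derangement fixes no line (duality), so $\ell\wedge\ell^g$ is a single point whose $g^{-1}$-image is the unique point of $\ell$ mapped back into $\ell$; and your ``$c(\gamma\vee\delta)=a(\gamma)+b(\delta)$ forces constancy'' argument is the same connectedness argument the paper uses to get $\dim K=4(q^2+q)+1$.

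The problem is that the reverse inclusion --- the lower bound $\mathrm{rank}(M)\geq (q^2+q+1)^2-(4(q^2+q)+1)$ --- is the entire content of the proposition, and you have not proved it. You correctly isolate it as the hard step, reformulate it (the derangement conditions must cut your quotient down to the image of the line-differences), and then only name two possible strategies (``explicit geometric constructions \dots or the character theory'') without carrying either out. That is a genuine gap, not a sketch of a proof. For comparison, the paper's route is: form $N=M^TM$ and compute every entry $N_{(\alpha,\beta),(\gamma,\delta)}$ as an explicit polynomial in $q$ depending only on the collinearity type of the quadruple (Proposition~\ref{crossratio}, which reduces to counting monic irreducible cubics over $\mathtt{GF}(q)$ via characteristic polynomials of matrices in prescribed form); then exhibit three explicit families of eigenvectors of $N$ with \emph{nonzero} eigenvalues --- the vector $e$, the vectors $e^1_{\alpha\beta\gamma},e^2_{\alpha\beta\gamma}$ attached to non-collinear triples (spanning a $2q^3$-dimensional space), and the vectors $e_{\alpha\beta\gamma\delta}$ attached to collinear quadruples (spanning a space of dimension at least $(q^2+q+1)(q^2-q-1)$) --- whose total dimension is exactly $(q^2+q+1)^2-(4(q^2+q)+1)$, forcing $\ker N=\ker M=K$. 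Producing these families and verifying the eigenvalue equations is where the geometry of $\PG_2(q)$ actually enters, and nothing in your proposal substitutes for it. (Also note the case $q=2$ is handled separately by machine computation in the paper; your argument as stated does not address it either.)
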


Before proving Theorem~\ref{main:thrm} (using these three yet unproven
propositions), we need to show that if the columns of the matrix $B$
are arranged so that the first $q^2+q+1$ columns correspond to the
ordered pairs of the form $(\alpha, \alpha)$, then there is an
arrangement of the rows such that the top left corner of $B$ forms a
$(q^2+q+1) \times (q^2+q+1)$ identity matrix. To prove this, it is
enough to show that for every $\alpha \in \mathcal{P}$ there is a
permutation in $G$ that has $\alpha$ as its only fixed point. (To
simplify some of the computations in the proof of
Theorem~\ref{main:thrm}, we actually prove something slightly
stronger.)

\begin{lemma}\label{lemma:5}
  For every $\alpha\in\mathcal{P}$ and for every $\ell\in\mathcal{L}$,
  there exists $g\in G$ with $g$ fixing only the element $\alpha$ of
  $\mathcal{P}$ and only the element $\ell$ of $\mathcal{L}$.
\end{lemma}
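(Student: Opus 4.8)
The plan is to exhibit, for each point $\alpha$ and each line $\ell$, an explicit element of $\PGL_3(q)$ whose only fixed point is $\alpha$ and whose only fixed line is $\ell$. Since $\PGL_3(q)$ acts transitively on the incident point--line pairs and on the non-incident point--line pairs, it suffices to treat two cases: $\alpha\in\ell$ and $\alpha\notin\ell$. In each case I would place $\alpha$ and $\ell$ in a convenient normal position relative to the standard basis $(\varepsilon_1,\varepsilon_2,\varepsilon_3)$ and write down a single matrix $g\in\GL_3(q)$, then check its fixed points and fixed lines directly from its Jordan/rational canonical form.

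The key computation is the standard dictionary between the fixed points and fixed lines of $g\in\PGL_3(q)$ and the eigenstructure of a representative in $\GL_3(q)$: the fixed points of $g$ on $\mathcal{P}$ are the $1$-dimensional eigenspaces of $g$ in $\mathtt{GF}(q)^3$, while the fixed lines of $g$ on $\mathcal{L}$ are the $1$-dimensional eigenspaces of the transpose-inverse $g^{-\top}$ acting on the dual space (equivalently, the $2$-dimensional $g$-invariant subspaces). So I want a matrix with exactly one eigendirection defined over $\mathtt{GF}(q)$ and exactly one invariant plane defined over $\mathtt{GF}(q)$. For the incident case $\alpha=\langle\varepsilon_1\rangle$, $\ell=\langle\varepsilon_1,\varepsilon_2\rangle$, a single Jordan block of size $3$, say $g=\left(\begin{smallmatrix}1&1&0\\0&1&1\\0&0&1\end{smallmatrix}\right)$, works: its only eigenline is $\langle\varepsilon_1\rangle$ and its only invariant plane is $\langle\varepsilon_1,\varepsilon_2\rangle$; one just has to confirm there are no other invariant subspaces, which is immediate from the single-Jordan-block structure. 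For the non-incident case $\alpha=\langle\varepsilon_3\rangle$, $\ell=\langle\varepsilon_1,\varepsilon_2\rangle$, I would take a block-diagonal matrix $\mathrm{diag}(C,\lambda)$ where $C$ is the companion matrix of an irreducible quadratic over $\mathtt{GF}(q)$ and $\lambda\in\mathtt{GF}(q)^\times$ is chosen not to be an eigenvalue of $C$ (i.e.\ not a norm-related value arising from $C$), so that the only rational eigendirection is $\langle\varepsilon_3\rangle$ and the only rational invariant plane is $\langle\varepsilon_1,\varepsilon_2\rangle$; here one must check that $\lambda$ can always be chosen, which needs only $q\geq 2$ since $C$ contributes no rational eigenvalues at all, so any $\lambda$ works and the only genuine constraint is that the element is nontrivial in $\PGL_3(q)$, which holds as $C$ is not scalar.

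The mild subtlety — and the only place one must be slightly careful — is to rule out \emph{all} spurious fixed points and lines, not merely to produce one of each. In the single-Jordan-block case this is clean. In the companion-plus-scalar case, the potential worry is an additional invariant plane or point coming from the interaction of the two blocks; but because the two blocks have coprime (indeed, disjoint over $\overline{\mathtt{GF}(q)}$, since one is irreducible quadratic and the other linear) characteristic polynomials, the invariant subspaces are exactly the sums of invariant subspaces of the individual blocks, so the rational ones are precisely $0$, $\langle\varepsilon_3\rangle$, $\langle\varepsilon_1,\varepsilon_2\rangle$, and the whole space. Thus $g$ fixes exactly $\alpha$ and exactly $\ell$. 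Finally, transporting back by an element of $G$ carrying the standard configuration to $(\alpha,\ell)$ gives the statement for arbitrary $\alpha,\ell$, using transitivity of $G$ on flags and on anti-flags. I expect the ``main obstacle'' to be purely bookkeeping: verifying the invariant-subspace lattice in the two canonical forms and making the existence-of-$\lambda$ argument explicit; there is no real difficulty, which is why the lemma is isolated as a preliminary.
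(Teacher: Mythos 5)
Your proposal is correct and follows essentially the same route as the paper: reduce by transitivity on flags and anti-flags to two canonical configurations, then take a single (regular unipotent) Jordan block for the incident case and a block-diagonal matrix with an irreducible $2\times 2$ companion block plus a $1\times 1$ block for the non-incident case, checking the invariant-subspace lattice in each. The only cosmetic difference is that the paper simply takes the scalar block equal to $1$, since the irreducible quadratic block has no rational eigenvalues at all and hence no choice of $\lambda$ needs to be made.
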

\begin{proof}
  Observe that $G$ acts transitively on the sets $\{(\alpha,\ell)\mid
  \alpha\in \mathcal{P},\ell\in \mathcal{L},\alpha\in \ell\}$ (this is
  the set of \textrm{flags} of $\PG_2(q)$) and $\{(\alpha,\ell)\mid
  \alpha\in \mathcal{P},\ell\in\mathcal{L},\alpha\notin \ell\}$ (this
  is the set of \textrm{anti-flags} of $\PG_2(q)$). In particular,
  replacing $\alpha$ and $\ell$ by $\alpha^h$ and $\ell^h$ for some
  $h\in G$, we may assume that either $\alpha=\langle
  \varepsilon_1\rangle$ and $\ell=\langle
  \varepsilon_1,\varepsilon_2\rangle$, or $\alpha=\langle
  \varepsilon_1\rangle$ and $\ell=\langle
  \varepsilon_2,\varepsilon_3\rangle$. In the first case,
\[
g=\left[
\begin{array}{ccc}
1&0&0\\
1&1&0\\
0&1&1\\
\end{array}
\right]\in G
\]
fixes only the element $\alpha$ of $\mathcal{P}$ and only the element $\ell$ of $\mathcal{L}$. In the second case, if 
\[\left(
\begin{array}{cc}
a&b\\
c&d
\end{array}\right)
\]
is a $2\times 2$-matrix with coefficients in $\mathtt{GF}(q)$ and with irreducible characteristic polynomial, then
\[
g=\left[
\begin{array}{ccc}
1&0&0\\
0&a&b\\
0&c&d\\
\end{array}
\right]\in G\]
fixes only  the element $\alpha$ of $\mathcal{P}$ and only the element $\ell$ of $\mathcal{L}$.
\end{proof}

Now we have all the tools needed to prove the main result of this paper.

\begin{proof}[Proof of Theorem~$\ref{main:thrm}$]
  Let $S$ be an independent set of maximal size of $\Gamma_G$. We aim to
  prove that $S$ is a coset of the stabilizer of a point or a line
  of $\PG_2(q)$. Up to multiplication of $S$ by a suitable
  element of $G$, we may assume that the identity of $G$ is in $S$. In
  particular, we have to prove that $S$ is the stabilizer of a point
  or a line of $\PG_2(q)$.

  From Proposition~\ref{prop:matrixA}, the characteristic vector
  $\chi_S$ of $S$ is a linear combination of the columns of $A$, and
  hence, for some vector $x$, we have
\[
\chi_S=Ax=\left(
\begin{array}{cc}
M\\
B\\
\end{array}
\right)
x=
\left(
\begin{array}{c}
Mx\\
Bx\\
\end{array}
\right).
\]
As the identity of $G$ is in $S$, there are no derangements in $S$. Hence, by our choice of the ordering of the rows of $A$, we get
\[
\chi_S=
\left(
\begin{array}{c}
0\\
t
\end{array}
\right)
\]
and thus $Mx=0$ and $Bx=t$. Hence $x$ lies in the right kernel of $M$. Therefore, by Proposition~\ref{lemma:4},
given a fixed point $\bar{\alpha}$ and a fixed line $\bar{\ell}$, we
have
\[
x=\sum_{\alpha\in\mathcal{P}}c_\alpha e_{\alpha\alpha}+\sum_{\alpha\in\mathcal{P}\setminus\{\bar{\alpha}\}}c_\alpha^1(e_{\bar{\alpha}}^1-e_{\alpha}^1)+
\sum_{\alpha\in\mathcal{P}\setminus\{ \bar{\alpha}\}}c_\alpha^2(e_{\bar{\alpha}}^2-e_{\alpha}^2)+
\sum_{\ell\in\mathcal{L}\setminus\{\bar{\ell}\}}c_\ell(e_{\bar{\ell}}-e_{\ell}),
\]
for some scalars $c_\alpha,c_\alpha^1,c_\alpha^2,c_\ell\in \mathbb{C}$.

From Proposition~\ref{rank}, the vectors
$e_{\bar{\alpha}}^1-e_\alpha^1$ and $e_{\bar{\alpha}}^2-e_\alpha^2$ are
in the right kernel of $A$, and hence
$B(e_{\bar{\alpha}}^1-e_\alpha^1)=B(e_{\bar{\alpha}}^2-e_\alpha^2)=0$. In
particular, 
\[
t = Bx = B \left( \sum_{\alpha\in\mathcal{P}}c_\alpha e_{\alpha\alpha} + \sum_{\ell\in\mathcal{L}\setminus\{\bar{\ell}\}}c_\ell(e_{\bar{\ell}}-e_{\ell}) \right);
\]
and hence we may assume that $c_\alpha^1=c_\alpha^2=0$, for every $\alpha\in\mathcal{P}$.

For $\alpha\in \mathcal{P}$, we have $Be_{\alpha\alpha}=\chi_{G_\alpha}$, where $\chi_{G_\alpha}$ is the
characteristic vector of the stabilizer $G_\alpha$ of $\alpha$. Moreover, for
$\ell\in \mathcal{L}$ and $g$ an element of $G$ that is not a
derangement, we have
\begin{equation}\label{equation:13}
(Be_{\ell})_g=\sum_{\alpha,\beta\in \ell}B_{g,(\alpha,\beta)}=\sum_{\alpha\in \ell, \alpha^g\in \ell}1=|\{\alpha\in\ell\mid \alpha^g\in \ell\}|=
\begin{cases}
q+1&\textrm{if }g\in G_\ell,\\
1&\textrm{if }g\notin G_\ell.
\end{cases}
\end{equation}
(For the last equality observe that $\ell\wedge\ell^g$ is a point of $\PG_2(q)$ when $\ell\neq \ell^g$.)
Thus $Be_{\ell}= q \chi_{G_{\ell}} + \one$, and it follows
from~\eqref{equation:13} that $B(e_{\bar{\ell}}-e_{\ell})=
q(\chi_{G_{\bar{\ell}}}-\chi_{G_\ell})$.
Putting these two facts together, we get that
\begin{equation}\label{equation:6}
t=Bx=\sum_{\alpha\in\mathcal{P}}c_\alpha \chi_{G_\alpha}+q\sum_{\ell\in\mathcal{L}\setminus\{\bar{\ell}\}}c_\ell(
\chi_{G_{\bar{\ell}}}-\chi_{G_\ell}).
\end{equation}
As the identity of $G$ is in $S$, the coordinate of $t$ corresponding to the
identity of $G$ is $1$ and hence, using the formula
in~\eqref{equation:6} for $t$, we get
\begin{equation}\label{equation:5}
\sum_{\alpha\in\mathcal{P}}c_\alpha=1.
\end{equation}

Applying Lemma~\ref{lemma:5} to $\alpha\in \mathcal{P}$ and to the
line $\bar{\ell}$, we get $g\in G$ that fixes only the point $\alpha$ and
only the line $\bar{\ell}$. As $t$ is a $\{0,1\}$-vector, the coordinate of $t$ corresponding
to $g$ is either $0$ or $1$; by taking the coordinate
corresponding to $g$ on the right-hand side of~\eqref{equation:6}, we
get
\begin{align}\label{equation:7}
c_\alpha+q\sum_{\ell\in\mathcal{L}\setminus\{\bar{\ell}\}}c_\ell \in \{0,1\},
\end{align}
for every $\alpha\in\mathcal{P}$. Applying this argument to $\alpha\in
\mathcal{P}$ and to $\ell\in\mathcal{L}\setminus\{\bar{\ell}\}$, we
obtain
\begin{align}\label{equation:8}
c_\alpha-qc_\ell \in  \{0,1\},
\end{align}
for every $\alpha\in\mathcal{P}$ and every $\ell\in\mathcal{L}\setminus\{\bar{\ell}\}$.

Write $c=\sum_{\ell\in\mathcal{L}\setminus\{\bar{\ell}\}}c_\ell$. From~\eqref{equation:7},
we have $c_\alpha=-qc$ or $c_\alpha=-qc+1$, for every
$\alpha\in\mathcal{P}$. Define the sets
\[
\mathcal{P}_{-qc}=\{\alpha\in\mathcal{P}\mid c_\alpha=-qc\} \quad\textrm{and}\quad
\mathcal{P}_{-qc+1}=\{\alpha\in\mathcal{P}\mid -qc+1\}. 
\]
We will first show that if these sets are both non-empty then $S$ is
the stabilizer of a point.  Next we will show if the first set is
empty then $S$ is the stabilizer of $\bar{\ell}$, and if the second is
empty then $S$ is the stabilizer of some line in $\mathcal{L}
\setminus \{\bar{\ell}\}$.

Suppose that both $\mathcal{P}_{-qc}$ and $\mathcal{P}_{-qc+1}$ are
non-empty. Let $\alpha\in \mathcal{P}_{-qc+1}$ and let $\beta \in
\mathcal{P}_{-qc}$.  Applying~\eqref{equation:8} to $\alpha$
and $\beta$, we get 
\[
-qc+1-qc_\ell\in \{0,1\} \quad\textrm{and}\quad -qc-qc_\ell\in \{0,1\},
\]
for each $\ell\in\mathcal{L}\setminus\{\bar{\ell}\}$.  If
$-qc+1-qc_\ell=0$, for some $\ell\in\mathcal{L}\setminus\{ \bar{\ell}\}$, then
$-qc-qc_\ell=-1$, but this is a contradiction.
Thus $-qc+1-qc_\ell=1$ (and hence $c_\ell=-c$), for every $\ell\in\mathcal{L}\setminus\{\bar{\ell}\}$. 
Since $c=\sum_{\ell\in\mathcal{L}\setminus\{\bar{\ell}\}}c_\ell$, we obtain 
\[
c=\sum_{\ell\in\mathcal{L}\setminus\{ \bar{\ell}\}}(-c)=-c(q^2+q).
\]
This implies that $c=0$, and hence $c_\ell=0$, for each
$\ell\in\mathcal{L}\setminus\{\bar{\ell}\}$. This shows that $t=\sum_{\alpha\in \mathcal{P}}c_\alpha
\chi_{G_\alpha}$. Now,~\eqref{equation:7} gives $c_\alpha\in \{0,1\}$, for every $\alpha\in \mathcal{P}$,
and hence~\eqref{equation:5} implies that there exists a unique $\alpha'\in\mathcal{P}$
with $c_{\alpha'}=1$ and all other scalars are zero. Thus
$t=\chi_{G_{\alpha'}}$ and $S$ is the stabilizer of the point $\alpha'$. 

Next we assume that $\mathcal{P}_{-qc+1}=\emptyset$. Thus $\mathcal{P}=\mathcal{P}_{-qc}$ and $c_\alpha=-qc$,
for each $\alpha\in\mathcal{P}$. In particular,~\eqref{equation:5} gives
$-qc(q^2+q+1)=1$, that is, $c=-1/(q(q^2+q+1))$. Moreover,~\eqref{equation:8}
gives $-qc-qc_\ell\in \{0,1\}$, that is,
\[
c_\ell \in \{ -c, -c-1/q\},
\] 
for each $\ell\in\mathcal{L}\setminus\{\bar{\ell}\}$.
Let $a$ be the number of lines $\ell\in\mathcal{L}\setminus\{ \bar{\ell}\}$ with $c_\ell=-c$ and let $b$ be the
number of lines $\ell\in\mathcal{L}\setminus\{\bar{\ell}\}$ with $c_\ell=-c-1/q$. As $|\mathcal{L}\setminus\{\bar{\ell}\}|=q^2+q$, we have $a+b=q^2+q$. Moreover, by the definition of the parameter $c$,
we have
\[a(-c)+b(-c-1/q)=c.
\]
Putting these together we have
$-c(q^2+q)-b/q=c$ which implies $b=-cq(q^2+q+1)=1$. In particular,
there exists a unique $\ell'\in\mathcal{L}\setminus\{\bar{\ell}\}$ with $c_{\ell'}=-c-1/q$ and all other
lines $\ell\in\mathcal{L}\setminus\{\bar{\ell}\}$ have $c_\ell=-c$. 
Therefore, using~\eqref{equation:9}, we get
\begin{align*}
t&= -cq\sum_{\alpha\in \mathcal{P}}\chi_{G_\alpha}+q(-c)\sum_{\ell\in \mathcal{L}\setminus\{\bar{\ell}\}}(\chi_{G_{\bar{\ell}}}-\chi_{G_\ell})+   
   q \left(\frac{-1}{q}\right) (\chi_{G_{\bar{\ell}}}-\chi_{G_{\ell'}})\\
&=-cq\left(\sum_{\alpha\in \mathcal{P}}\chi_{G_\alpha}-\sum_{\ell\in \mathcal{L}}\chi_{G_\ell}\right)  
    - qc\sum_{\ell\in\mathcal{L}}\chi_{G_{\bar{\ell}}} - (\chi_{G_{\bar{\ell}}} - \chi_{G_{\ell'}})\\
&=-  cq(q^2+q+1)\chi_{G_{\bar{\ell}}} - (\chi_{G_{\bar{\ell}}} - \chi_{G_{\ell'}})=\chi_{G_{\bar{\ell}}}-(\chi_{G_{\bar{\ell}}}-\chi_{G_{\ell'}}) =  \chi_{G_{\ell'}}.
\end{align*}
In this case $S$ is the stabilizer of the line $\ell'$.

Finally, suppose that $\mathcal{P}_{-qc}=\emptyset$. Thus $\mathcal{P}=\mathcal{P}_{-qc+1}$ and
$c_\alpha=-qc+1$, for each $\alpha\in\mathcal{P}$. In particular,~\eqref{equation:5}
gives $(-qc+1)(q^2+q+1)=1$, that is, $c=(q+1)/(q^2+q+1)$.
Furthermore, ~\eqref{equation:8} gives $-qc+1-qc_\ell\in \{0,1\}$, that is,
\[
c_\ell \in \{ -c, -c+1/q\},
\]
for each $\ell\in\mathcal{L}\setminus\{\bar{\ell}\}$.
Again, let $a$ be the number of lines $\ell\in\mathcal{L}\setminus\{\ell\}$ with
$c_\ell=-c+1/q$ and let $b$ be the number of lines $\ell\in\mathcal{L}\setminus\{\ell\}$ with
$c_\ell=-c$. As in the previous case, we have the equations
\[
a+b=q^2+q, \qquad  a(-c+1/q)+b(-c)=c,
\]
and hence $-c(q^2+q)+a/q=c$. Thus
$a=cq(q^2+q+1)=q^2+q$ and $b=0$. Therefore, using~\eqref{equation:9}, we get
\begin{align*}
t&=(-cq+1)\sum_{\alpha\in \mathcal{P}}\chi_{G_\alpha}+q(-c+1/q)\sum_{\ell\in \mathcal{L}\setminus\{\bar{\ell}\}}(\chi_{G_{\bar{\ell}}}-\chi_{G_\ell})\\
&=(-cq+1)\left(\sum_{\alpha\in \mathcal{P}}\chi_{G_\alpha}-\sum_{\ell\in \mathcal{L}}\chi_{G_\ell}\right)+(-cq+1)\sum_{\ell\in\mathcal{L}}\chi_{G_{\bar{\ell}}}\\
&=(-cq+1)(q^2+q+1)\chi_{G_{\bar{\ell}}} =\chi_{G_{\bar{\ell}}}.
\end{align*}
In this case $S$ is the stabilizer of the line $\bar{\ell}$.
\end{proof}

\section{Proof of Propositions~\ref{prop:matrixA} and~\ref{rank}}\label{Proof}

For a moment, we leave part of the notation that we have set so far to
consider general groups.  Let $G$ be a permutation group on $\Omega$
and let $\Gamma_G$ be its derangement graph. Since the right regular
representation of $G$ is a subgroup of the automorphism group $\Aut(\Gamma_G)$ of $\Gamma_G$, we see that
$\Gamma_G$ is a \textit{Cayley graph}. Namely, if $\mathcal{D}$ is the set of
derangements of $G$, then $\Gamma_G$ is the Cayley graph on $G$ with
connection set $\mathcal{D}$, i.e. $\Gamma_G=\Cay(G,\mathcal{D})$.
Clearly, $\mathcal{D}$ is a union of $G$-conjugacy classes, that is,
$\Gamma_G$ is a \emph{normal} Cayley graph.

As usual, we simply say that the complex number $\xi$ is an
\emph{eigenvalue} of the graph $\Gamma$ if $\xi$ is an eigenvalue of
the adjacency matrix of $\Gamma$.  We use $\Irr(G)$ to denote the
\emph{irreducible complex characters} of the group $G$ and given
$\chi\in\Irr(G)$ and a subset $S$ of $G$ we write $\chi(S)$ for
$\sum_{s\in S}\chi(s)$.  In the following lemma we recall that the
eigenvalues of a normal Cayley graph on $G$ are determined by
the irreducible complex characters of $G$, see~\cite{Ba}.

\begin{lemma}\label{eigenvalues}
  Let $G$ be a permutation group on $\Omega$ and let $\mathcal{D}$ be
  the set of derangements of $G$. The spectrum of the graph $\Gamma_G$
  is $\{\chi(\mathcal{D})/\chi(1)\mid \chi\in \Irr(G)\}$. Also, if
  $\tau$ is an eigenvalue of $\Gamma_G$ and $\chi_1,\ldots,\chi_s$ are
  the irreducible characters of $G$ such that
  $\tau=\chi_i(\mathcal{D})/\chi_i(1)$, then the dimension of the
  $\tau$-eigenspace of $\Gamma_G$ is $\sum_{i=1}^s\chi_i(1)^2.$
\end{lemma}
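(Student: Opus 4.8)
The plan is to realize the adjacency operator of $\Gamma_G=\Cay(G,\mathcal{D})$ as left multiplication by a central element of the group algebra and then read off its eigenvalues from the Wedderburn decomposition of $\mathbb{C}[G]$. First I would note that $\mathcal{D}$ is closed under inversion (an element of $G$ fixes no point of $\Omega$ exactly when its inverse does), so $\Gamma_G$ is a genuine undirected graph; working in $\mathbb{C}[G]$ with basis $\{e_g\mid g\in G\}$, its adjacency matrix $A$ satisfies $Ae_h=\sum_{g:\,gh^{-1}\in\mathcal{D}}e_g=\sum_{d\in\mathcal{D}}e_{dh}$, so $A$ is precisely the operator ``left multiplication by $z$'', where $z=\sum_{d\in\mathcal{D}}d\in\mathbb{C}[G]$. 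Because $\mathcal{D}$ is a union of conjugacy classes of $G$, the element $z$ lies in the centre $Z(\mathbb{C}[G])$.

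Next I would invoke the decomposition $\mathbb{C}[G]=\bigoplus_{\chi\in\Irr(G)}\mathbb{C}[G]e_\chi$ into its minimal two-sided ideals, where $e_\chi$ is the centrally primitive idempotent attached to $\chi$. Each summand $\mathbb{C}[G]e_\chi$ is isomorphic to the full matrix algebra $M_{\chi(1)}(\mathbb{C})$ and, as a left $\mathbb{C}[G]$-module, is a direct sum of $\chi(1)$ copies of the irreducible module affording $\chi$; in particular $\dim_{\mathbb{C}}\mathbb{C}[G]e_\chi=\chi(1)^2$, and these dimensions add to $|G|$. Since $z$ is central, left multiplication by $z$ acts as a single scalar $\omega_\chi(z)$ on $\mathbb{C}[G]e_\chi$. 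To identify that scalar, I would run the standard trace argument on the irreducible module affording $\chi$: if a class sum $\widehat{C}=\sum_{g\in C}g$ acts there by $\lambda$, then taking traces gives $\lambda\,\chi(1)=\sum_{g\in C}\chi(g)$; summing this over the conjugacy classes that make up $\mathcal{D}$ yields $\omega_\chi(z)=\chi(\mathcal{D})/\chi(1)$.

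Combining these observations, $A$ is diagonalizable and the $\chi$-isotypic component of $\mathbb{C}[G]$ lies in the $\bigl(\chi(\mathcal{D})/\chi(1)\bigr)$-eigenspace of $A$. Hence the spectrum of $\Gamma_G$ is exactly $\{\chi(\mathcal{D})/\chi(1)\mid\chi\in\Irr(G)\}$, and, because the ideals $\mathbb{C}[G]e_\chi$ are independent and span $\mathbb{C}[G]$, the multiplicity of an eigenvalue $\tau$ is obtained by adding the dimensions $\chi_i(1)^2$ over precisely those irreducible characters $\chi_i$ with $\chi_i(\mathcal{D})/\chi_i(1)=\tau$. The only point needing real care is the bookkeeping for the central character $\omega_\chi$: one must use that $\omega_\chi$ is an algebra homomorphism on $Z(\mathbb{C}[G])$ and that extending the class-sum scalar formula $\mathbb{C}$-linearly over the classes contained in $\mathcal{D}$ gives exactly $\chi(\mathcal{D})/\chi(1)$. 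Everything else is a routine application of the Artin--Wedderburn structure theory of $\mathbb{C}[G]$, and indeed the result is classical (see~\cite{Ba}).
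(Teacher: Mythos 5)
Your proof is correct and is precisely the classical argument for normal Cayley graphs that the paper invokes by citation to~\cite{Ba} rather than proving: realize the adjacency operator as left multiplication by the central element $\sum_{d\in\mathcal{D}}d$, decompose $\mathbb{C}[G]$ into the two-sided ideals $\mathbb{C}[G]e_\chi$ of dimension $\chi(1)^2$, and compute the scalar by which the central element acts via the trace formula for class sums. There is nothing to object to; the bookkeeping with the central character $\omega_\chi$ is handled correctly, and the closure of $\mathcal{D}$ under inversion and conjugation is rightly noted as what makes the whole argument apply.
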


The next result is the well-known \emph{ratio-bound} for independent
sets in a graph; for a proof see (for example)~\cite[Lemma~$3$]{KaPa}.

\begin{lemma}\label{lemma:7}
Let $G$ be a permutation group, let $\tau$ be the minimum eigenvalue of
$\Gamma_G$, let $d$ be the valency of $\Gamma_G$ and let $S$ be an independent set of $\Gamma_G$.  Then
\[
|S|\leq |G|/(1-d/\tau).
\] 
If the equality is met, then $\chi_S-\frac{|S|}{|G|}\chi_{G}$ is an
eigenvector of $\Gamma_G$ with eigenvalue $\tau$.
\end{lemma}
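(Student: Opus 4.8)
The final statement in the excerpt is Lemma~\ref{lemma:7}, the ratio bound (Hoffman bound) for independent sets in a vertex-transitive graph, together with the characterization of equality.

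\medskip

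The plan is to exploit that $\Gamma_G$ is a regular graph (indeed vertex-transitive, being a Cayley graph) of valency $d=|\mathcal{D}|$, so that the all-ones vector $\one$ is an eigenvector with eigenvalue $d$, and the orthogonal complement $\one^\perp$ is invariant under the adjacency matrix $\mathcal{A}$ of $\Gamma_G$. First I would fix an independent set $S$ and write its characteristic vector as $\chi_S = \frac{|S|}{|G|}\one + w$, where $w\in\one^\perp$ is the projection of $\chi_S$ onto the non-trivial eigenspaces. Decompose $w = \sum_{i} w_i$ into eigenvectors $w_i$ of $\mathcal{A}$ with eigenvalues $\tau_i < d$ (all strictly less than $d$ since $\Gamma_G$, being non-empty of derangements, is connected on each orbit; more carefully one only needs that no eigenvalue other than those on $\one$ equals $d$, which holds after noting $\one$ spans the $d$-eigenspace because the graph is connected — and if it is disconnected the argument applies componentwise). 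Then I would compute the quadratic form $\chi_S^{\top}\mathcal{A}\,\chi_S$ in two ways.

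\medskip

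On one hand, since $S$ is independent, there are no edges inside $S$, so $\chi_S^{\top}\mathcal{A}\,\chi_S = 0$. On the other hand, using the eigen-decomposition,
\[
\chi_S^{\top}\mathcal{A}\,\chi_S = d\,\frac{|S|^2}{|G|} + \sum_i \tau_i\,\|w_i\|^2 \ge d\,\frac{|S|^2}{|G|} + \tau\sum_i \|w_i\|^2 = d\,\frac{|S|^2}{|G|} + \tau\Bigl(|S| - \frac{|S|^2}{|G|}\Bigr),
\]
where the inequality uses $\tau_i \ge \tau$ for all $i$ together with $\tau < 0$ forcing us to bound each coefficient below — here one must check $\tau < 0$, which follows because $\mathrm{tr}(\mathcal{A}) = 0$ forces the minimum eigenvalue to be negative (the graph has at least one edge). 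We also used $\|\chi_S\|^2 = |S|$ and $\|w\|^2 = |S| - |S|^2/|G|$. Setting the left side to $0$ and dividing by $|S|$ (assuming $S\neq\emptyset$; the empty set trivially satisfies the bound) gives $0 \ge \frac{|S|}{|G|}(d-\tau) + \tau$, which rearranges to $|S| \le \frac{-\tau|G|}{d-\tau} = \frac{|G|}{1 - d/\tau}$, as claimed.

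\medskip

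For the equality case: if $|S| = |G|/(1-d/\tau)$, then every inequality above is tight, so $\sum_i \tau_i \|w_i\|^2 = \tau \sum_i \|w_i\|^2$, i.e. $\sum_i (\tau_i - \tau)\|w_i\|^2 = 0$. Since $\tau_i - \tau \ge 0$ and $\|w_i\|^2 \ge 0$, this forces $w_i = 0$ whenever $\tau_i \neq \tau$. Hence $w = \sum_i w_i$ lies entirely in the $\tau$-eigenspace, and $w = \chi_S - \frac{|S|}{|G|}\one = \chi_S - \frac{|S|}{|G|}\chi_G$ is therefore an eigenvector of $\mathcal{A}$ with eigenvalue $\tau$ (it is non-zero since $S$ is a proper non-empty subset). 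The main obstacle — really just a point requiring care rather than a deep difficulty — is handling the $d$-eigenspace correctly: one must be sure that $\tau \neq d$ and that the projection of $\chi_S$ along $\one$ is exactly $\frac{|S|}{|G|}\one$, which uses $\langle \chi_S, \one\rangle = |S|$ and $\|\one\|^2 = |G|$; connectivity (or a componentwise reduction) guarantees the $d$-eigenspace is spanned by $\one$ so that none of the $w_i$ secretly carries eigenvalue $d$. Everything else is the standard two-way count of the quadratic form.
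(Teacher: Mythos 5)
Your proof is correct: it is the standard Hoffman/Delsarte ratio-bound argument via the two-way evaluation of the quadratic form $\chi_S^{\top}\mathcal{A}\chi_S$, which is exactly the argument behind the proof the paper defers to (\cite[Lemma~3]{KaPa}), so there is nothing substantively different to compare. Two cosmetic remarks: the inequality $\sum_i\tau_i\|w_i\|^2\ge\tau\sum_i\|w_i\|^2$ needs only $\tau_i\ge\tau$ (the condition $\tau<0$ is what you need later, to ensure $1-d/\tau>0$ when rearranging), and the worry about the $d$-eigenspace is unnecessary since the argument never uses that $\one$ spans it.
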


We are now ready to return to only considering the group
$G=\PGL_{3}(q)$.  Let $\pi$ be the permutation character of $G$. As
$G$ is $2$-transitive, we have $\pi=1+\chi_0$, where $1$ is the
principal character of $G$ and $\chi_0$ is an irreducible character of
degree $q^2+q$.

\begin{lemma}\label{eigenvaluesPGL3} Let $\Gamma_G$ be the derangement
  graph of $G = \PGL_3(q)$. Then the following hold.
\begin{enumerate}
\item The largest eigenvalue of $\Gamma_G$ is $|\mathcal{D}|=(q^2-1)^2q^4/3$.
\item The minimum eigenvalue of $\Gamma_G$ is $\tau = -(q-1)(q^2-1)q^3/3$ and, provided that $q>2$, the eigenvalue $\tau$ has multiplicity $(\Gcd(3,q-1) - 1) + (q^2+q)^2$.
\item For $\chi \in \Irr(G)$,
\[
\chi(\mathcal{D})/\chi(1)=\tau=-(q-1)(q^2-1)q^3/3
\] 
if and only if $\chi=\chi_0$, or $\chi=\xi$ where $\xi$ is one of the $(\Gcd(3,q-1)-1)$ non-principal irreducible linear characters of $G$.
\end{enumerate}
\end{lemma}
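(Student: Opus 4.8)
The plan is to use Lemma~\ref{eigenvalues}, which identifies the spectrum of $\Gamma_G$ with $\{\chi(\mathcal{D})/\chi(1)\mid\chi\in\Irr(G)\}$ and the multiplicity of an eigenvalue with the sum of the squares of the degrees of the characters realising it; so everything reduces to evaluating the ratios $\chi(\mathcal{D})/\chi(1)$. For part~(1), since $\Gamma_G=\Cay(G,\mathcal{D})$ its largest eigenvalue is the valency $|\mathcal{D}|$ --- the ratio for the principal character --- and it remains to compute $|\mathcal{D}|$. Here I would observe that an element of $G$ fixes a point of $\PG_2(q)$ exactly when some matrix representing it has an eigenvalue in $\mathtt{GF}(q)$, so the derangements are the $\PGL_3(q)$-classes of matrices with irreducible characteristic polynomial; there are $(q^3-q)/3$ monic irreducible cubics over $\mathtt{GF}(q)$, for each of these the matrices of $\GL_3(q)$ with that characteristic polynomial form a single conjugacy class with centraliser isomorphic to $\mathtt{GF}(q^3)^\times$ of order $q^3-1$, and dividing $|\GL_3(q)|$ by $q^3-1$, multiplying by $(q^3-q)/3$, and dividing by $q-1$ for the group of scalar matrices gives $|\mathcal{D}|=q^4(q^2-1)^2/3$.

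Next I would single out the characters that attain $\tau$. As $G$ is $2$-transitive, $\pi=1+\chi_0$, and evaluating $\pi$ at $g\in\mathcal{D}$ gives $\fix(g)=0$, hence $\chi_0(g)=-1$ for every derangement $g$; therefore $\chi_0(\mathcal{D})=-|\mathcal{D}|$ and $\chi_0(\mathcal{D})/\chi_0(1)=-|\mathcal{D}|/(q^2+q)$, which a direct simplification identifies with $\tau=-(q-1)(q^2-1)q^3/3$. A non-principal irreducible linear character $\xi$ (these exist precisely when $3\mid q-1$, and then there are $\Gcd(3,q-1)-1$ of them) contributes $\xi(\mathcal{D})$ itself; as $\xi$ factors through the determinant taken modulo cubes, this depends only on the distribution over $\mathcal{D}$ of the $(\mathtt{GF}(q)^\times)^3$-coset of the determinant. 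Since the determinant of a derangement is the norm $N_{\mathtt{GF}(q^3)/\mathtt{GF}(q)}(\gamma)$ of a generator $\gamma$ of $\mathtt{GF}(q^3)$ over $\mathtt{GF}(q)$, counting generators with prescribed norm (the norm map is onto, and the generators that already lie in $\mathtt{GF}(q)$ account for an explicit shortfall in the cube class) shows that the number of derangements with cube determinant falls short of the number with each non-cube class by exactly $q^3(q-1)^2(q+1)/3$, whence $\xi(\mathcal{D})=\tau$ as well.

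The heart of the proof is to show that no further irreducible character attains $\tau$. A derangement $g$ has irreducible characteristic polynomial, so it fixes neither a point nor a line of $\PG_2(q)$, and therefore lies in no proper parabolic subgroup of $\GL_3(q)$ up to conjugacy; consequently $\mathrm{Ind}_P^G(\sigma)(g)=0$ for every proper parabolic $P$ and every class function $\sigma$ of its Levi. By the Harish-Chandra classification of $\Irr(\GL_3(q))$, every irreducible character other than the $\det$-twists of the unipotent characters $1,\chi_0,\mathrm{St}$ and the cuspidal characters of degree $(q-1)(q^2-1)$ is of the form $\mathrm{Ind}_P^G(\sigma)$ for some proper parabolic $P$, so all of these vanish on $\mathcal{D}$ and give the eigenvalue $0>\tau$. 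Restricting to $\PGL_3(q)$, there remain: the trivial character (eigenvalue $|\mathcal{D}|$, the largest); $\chi_0$ and the non-principal linear characters (handled above); the Steinberg character and its linear twists $\xi\mathrm{St}$, for which $\mathrm{St}(g)=1$ on $\mathcal{D}$ --- for instance because $\mathrm{Ind}_B^G(1)=1+2\chi_0+\mathrm{St}$ vanishes on $\mathcal{D}$ --- so the eigenvalue equals $\xi(\mathcal{D})/q^3$ and has absolute value at most $|\mathcal{D}|/q^3<|\tau|$; the twists $\xi\chi_0$ with $\xi$ non-principal linear, where $\chi_0\equiv -1$ on $\mathcal{D}$ yields $(\xi\chi_0)(\mathcal{D})=-\xi(\mathcal{D})=-\tau>0>\tau$; and the cuspidal characters, whose value at an element with irreducible characteristic polynomial and eigenvalue $\gamma$ equals $\theta(\gamma)+\theta(\gamma^q)+\theta(\gamma^{q^2})$ for the associated character $\theta$ of $\mathtt{GF}(q^3)^\times$, so that summing over $\mathcal{D}$ and using character orthogonality on $\mathtt{GF}(q^3)^\times/\mathtt{GF}(q)^\times$ produces the eigenvalue $-q^3$, which is greater than $\tau$ exactly when $(q-1)^2(q+1)>3$, that is, when $q\geq 3$. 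Putting the cases together, $\tau$ is the minimum eigenvalue for every $q$, and for $q>2$ it is attained precisely by $\chi_0$ and the $\Gcd(3,q-1)-1$ non-principal linear characters, so by Lemma~\ref{eigenvalues} its multiplicity is then $(q^2+q)^2+(\Gcd(3,q-1)-1)$.

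The step I expect to be the main obstacle is the cuspidal case: the trivial estimate $|\chi(g)|\le 3$ on $\mathcal{D}$ is too weak for small $q$, so the character sum has to be computed exactly, and this forces one to keep track of the sizes of the derangement conjugacy classes in $\PGL_3(q)$ --- which are not all equal, because a scalar matrix may centralise a generator of $\mathtt{GF}(q^3)^\times$ modulo $\mathtt{GF}(q)^\times$, a phenomenon governed by $\Gcd(3,q-1)$ --- and of the few non-generic semisimple derangement classes that arise when $3\mid q-1$. For $q=2$ this estimate genuinely fails, the two cuspidal characters of degree $3$ also attaining $\tau$; this is why part~(2) excludes $q=2$, and that small case is dealt with separately.
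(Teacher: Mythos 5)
Your proposal is correct, and it reaches the conclusion by a genuinely different route: the paper's proof of this lemma is essentially a citation --- it reads off $|\mathcal{D}|$, $\tau$ and the characters attaining $\tau$ from Steinberg's explicit character table and class data for $\PGL_3(q)$ --- whereas you reconstruct every needed character value from structural principles. Your count of $|\mathcal{D}|$ via irreducible characteristic polynomials, the evaluation $\chi_0\equiv -1$ on $\mathcal{D}$, the determinant/norm count giving $\xi(\mathcal{D})=\tau$ (I checked: $|D_1|-|D_0|=q^3(q-1)^2(q+1)/3$ is exactly $|\tau|$), the vanishing on $\mathcal{D}$ of everything parabolically induced, $\mathrm{St}\equiv 1$ on $\mathcal{D}$, and the cuspidal eigenvalue $-q^3$ are all correct, and your analysis cleanly explains the two features the paper leaves implicit: why the multiplicity statement needs $q>2$ (at $q=2$ one has $-q^3=\tau$, so the two degree-$3$ cuspidal characters of $\PGL_3(2)\cong\PSL_2(7)$ also attain $\tau$ --- note that, as written, part (3) of the lemma is therefore false for $q=2$ and tacitly carries the same proviso), and why $q=2$ is otherwise harmless for the minimality of $\tau$. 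The trade-off is that your argument is not more elementary so much as differently sourced: the two load-bearing facts --- that every irreducible of $\GL_3(q)$ outside the $\det$-twisted unipotent series and the cuspidal series is \emph{equal to} (not merely a constituent of) a full induction $\mathrm{Ind}_P^G(\sigma)$ from a proper parabolic, and that a cuspidal character takes the value $\theta(\gamma)+\theta(\gamma^q)+\theta(\gamma^{q^2})$ on a regular elliptic element --- are themselves theorems about $\GL_3(q)$ of the same depth as the character table, so they would still need a reference or the short degree-count argument (via transitivity of Harish-Chandra induction) that makes the first claim precise. With those two facts either cited or proved, your version is a complete and arguably more illuminating proof than the one in the paper.
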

\begin{proof}
  The character table and a complete information on the conjugacy
  classes of $\PGL_3(q)$ is in~\cite[Section~$3$]{Steinberg}. A direct
  inspection of the tables in~\cite[Section~$3$]{Steinberg} gives that
  $|\mathcal{D}|=(q^2-1)^2q^4/3$ and, using Lemma~\ref{eigenvalues},
  that $\tau=-(q-1)(q^2-1)q^3/3$.  For $q>2$, further inspection of
  the tables in~\cite[Section~$3$]{Steinberg} reveals that
  $\tau=\chi(\mathcal{D})/\chi(1)$ only if $\chi=\chi_0$, of if $\chi$
  is one of the $(\Gcd(3,q-1)-1)$ non-principal irreducible linear
  characters of $G$.
\end{proof}

Applying Lemma~\ref{lemma:7} with the values for $\tau$ and $d$ as in
Lemma~\ref{eigenvaluesPGL3} we obtain the maximal size of an
independent set of $\Gamma_G$.

\begin{corollary}\label{indysetPGL3}
The maximal size of an independent set of $\Gamma_{G}$ is $q^3(q^2-1)(q-1)$.
\end{corollary}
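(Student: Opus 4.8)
The plan is to combine the spectral data of Lemma~\ref{eigenvaluesPGL3} with the ratio bound of Lemma~\ref{lemma:7}, and then to exhibit an independent set meeting the bound so obtained. Since $\Gamma_G=\Cay(G,\mathcal{D})$, its valency is $d=|\mathcal{D}|=(q^2-1)^2q^4/3$ (Lemma~\ref{eigenvaluesPGL3}(1)), and its minimum eigenvalue is $\tau=-(q-1)(q^2-1)q^3/3$ (Lemma~\ref{eigenvaluesPGL3}(2)).

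First I would feed these two numbers into Lemma~\ref{lemma:7}. A short computation gives
\[
\frac{d}{\tau}=\frac{(q^2-1)^2q^4/3}{-(q-1)(q^2-1)q^3/3}=-\frac{(q^2-1)q}{q-1}=-q(q+1),
\]
so that $1-d/\tau=q^2+q+1$. Hence every independent set $S$ of $\Gamma_G$ satisfies
\[
|S|\le\frac{|G|}{q^2+q+1}=\frac{q^3(q^3-1)(q^2-1)}{q^2+q+1}=q^3(q^2-1)(q-1),
\]
using $q^3-1=(q-1)(q^2+q+1)$. This is the asserted upper bound.

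For the matching lower bound I would note that for any point $\alpha\in\mathcal{P}$ the stabilizer $G_\alpha$ is intersecting: if $g,h\in G_\alpha$ then $gh^{-1}\in G_\alpha$ fixes $\alpha$, so $gh^{-1}$ is not a derangement. Thus $G_\alpha$ is an independent set of $\Gamma_G$, and since $G$ is transitive on $\mathcal{P}$ we have $|G_\alpha|=|G|/|\mathcal{P}|=q^3(q^2-1)(q-1)$, so the bound is attained.

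There is no genuine obstacle here: the whole weight of the statement is borne by Lemma~\ref{eigenvaluesPGL3}, which in turn rests on the explicit character table of $\PGL_3(q)$ from~\cite{Steinberg}; what remains is the arithmetic simplification above together with the trivial remark that a point stabilizer is intersecting. (A line stabilizer $G_\ell$ would serve equally well, having the same order by the duality recorded in~\eqref{equation:9}.)
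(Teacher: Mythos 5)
Your proof is correct and follows exactly the paper's argument: the ratio bound of Lemma~\ref{lemma:7} with the values of $d=|\mathcal{D}|$ and $\tau$ from Lemma~\ref{eigenvaluesPGL3} gives the upper bound $|G|/(1-d/\tau)=q^3(q^2-1)(q-1)$, attained by a point stabilizer. Your version merely writes out the arithmetic that the paper leaves implicit, and the computation checks out.
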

\begin{proof}
  Using the value of $|\mathcal{D}|$ and of the minimum eigenvalue
  $\tau$ of $\Gamma_G$ obtained in Lemma~\ref{eigenvaluesPGL3},
  Lemma~\ref{lemma:7} shows that an independent set of $\Gamma_G$ has
  size no more than $|G|/(1-|\mathcal{D}|/\tau)=q^3(q^2-1)(q-1)$.  The
  stabilizer of a point is an independent set of $\Gamma_G$ of this
  size.
\end{proof}

The same argument can be applied to the group $\PSL_3(q)$.

\begin{corollary}\label{indysetPSL3}
The maximal size of an independent set of $\Gamma_{\PSL_3(q)}$ is $q^3(q^2-1)(q-1)/\Gcd(3,q-1)$.
\end{corollary}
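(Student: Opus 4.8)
The plan is to repeat, for the group $H:=\PSL_3(q)$, the argument that gave Corollary~\ref{indysetPGL3}. Write $d=\Gcd(3,q-1)$; when $d=1$ we have $H=\PGL_3(q)$ and there is nothing new, so the case of interest is $q\equiv 1\pmod 3$. Since $|H|=q^3(q^3-1)(q^2-1)/d$ and $q^3-1=(q-1)(q^2+q+1)$, we have $|H|/(q^2+q+1)=q^3(q^2-1)(q-1)/d$, which is exactly the value claimed and is also the order of the stabilizer in $H$ of a point of $\PG_2(q)$. Such a point stabilizer is an independent set of $\Gamma_H$, since any two of its elements agree on that point; hence the whole task is to establish the upper bound $|S|\le |H|/(q^2+q+1)$ for an arbitrary independent set $S$ of $\Gamma_H$.

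For the upper bound we will use the ratio bound, Lemma~\ref{lemma:7}, so first we identify the valency and the minimum eigenvalue of $\Gamma_H$. The group $H$ is still $2$-transitive on $\PG_2(q)$ (it is transitive, and the stabilizer of a point is transitive on the remaining points), so $\Gamma_H$ is the normal Cayley graph $\Cay(H,\mathcal{D}')$, where $\mathcal{D}'=\mathcal{D}\cap H$ is the set of derangements of $H$; its valency is $|\mathcal{D}'|$. The permutation character of $H$ is $\pi'=1+\chi_0'$, where $\chi_0'=(\chi_0)|_{H}$ is irreducible of degree $q^2+q$, so by Lemma~\ref{eigenvalues} the eigenvalues of $\Gamma_H$ are the numbers $\chi(\mathcal{D}')/\chi(1)$ with $\chi\in\Irr(H)$. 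Since $\pi'(g)$ equals the number of points fixed by $g$, we get $\pi'(\mathcal{D}')=0$, hence $\chi_0'(\mathcal{D}')=-|\mathcal{D}'|$, and $\chi_0'$ affords the eigenvalue $\tau':=-|\mathcal{D}'|/(q^2+q)$.

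The remaining step, which I expect to be the main obstacle, is to show that $\tau'$ is the \emph{minimum} eigenvalue of $\Gamma_H$; this is the precise analogue of parts~(2)--(3) of Lemma~\ref{eigenvaluesPGL3}. One settles it by inspecting the conjugacy classes and the character table of $\PSL_3(q)$ — these are listed in~\cite[Section~$3$]{Steinberg} alongside those of $\PGL_3(q)$, or can be deduced from the $\PGL_3(q)$ data via Clifford theory, bookkeeping which derangement classes of $\PGL_3(q)$ lie in $\PSL_3(q)$ and how they fuse or split — and checking that $\chi(\mathcal{D}')/\chi(1)\ge \tau'$ for every non-principal $\chi\in\Irr(H)$ (the principal character gives the top eigenvalue $|\mathcal{D}'|$). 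Once this is in hand, Lemma~\ref{lemma:7} applies with valency $|\mathcal{D}'|$ and minimum eigenvalue $\tau'$; since $1-|\mathcal{D}'|/\tau'=1+(q^2+q)=q^2+q+1$, every independent set $S$ of $\Gamma_H$ satisfies $|S|\le |H|/(q^2+q+1)=q^3(q^2-1)(q-1)/d$. Together with the point stabilizer attaining this value, this proves the corollary.
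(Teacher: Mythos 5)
Your proposal is correct and takes essentially the same route as the paper: both reduce to the ratio bound of Lemma~\ref{lemma:7} applied to $\Gamma_{\PSL_3(q)}$, with the derangement count and the minimum eigenvalue read off from Steinberg's character table and the point stabilizer supplying the extremal example. In fact your identification of the relevant eigenvalue as $\tau'=-|\mathcal{D}_0|/(q^2+q)=-(q-1)^3(q+2)q^2/9$ is the value that makes the arithmetic $1-|\mathcal{D}_0|/\tau'=q^2+q+1$ come out right; the paper's printed $\tau_0=-(q-1)^3(q+2)q^2/3$ appears to carry a spurious factor of $3$.
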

\begin{proof}
  If $\Gcd(3,q-1)=1$, then $\PGL_3(q)=\PSL_3(q)$ and hence from Corollary~\ref{indysetPGL3}
  the maximal size of an independent set of $\Gamma_{\PSL_3(q)}$ is
  $q^3(q^2-1)(q-1)$.

  Suppose that $q-1$ is divisible by $3$. Denote by $\mathcal{D}_0$
  the derangements of $\PSL_3(q)$ and let $\tau_0$ be the minimum
  eigenvalue of $\Gamma_{\PSL_3(q)}$. It can be inferred
  from~\cite[Section~$3$]{Steinberg} that
\[
|\mathcal{D}_0|=(q-1)^2(q+2)(q^2-1)q^3/9 , \qquad \tau_0=-(q-1)^3(q+2)q^2/3.
\]
Since $|\PSL_3(q)|/(1-\frac{|\mathcal{D}_0|}{\tau_0})=q^3(q^2-1)(q-1)/3$,
the rest of the proof follows from Lemma~\ref{lemma:7}.
\end{proof}

Next we will prove Proposition~\ref{rank}.

\begin{proof}[Proof of Proposition~$\ref{rank}$]
For $\alpha,\beta,\gamma,\delta\in \mathcal{P}$, the entry $( A^T A)_{(\alpha,\beta),(\gamma,\delta)}$ equals the number of permutations of
$G$ mapping $\alpha$ into $\beta$ and $\gamma$ into $\delta$. Since
$G$ is $2$-transitive, we get by a simple counting argument that 
\[
(A^T A)_{(\alpha,\beta),(\gamma,\delta)}=
\begin{cases} 
\frac{|G|}{|\mathcal{P}|}=(q-1)(q^2-1)q^3&\textrm{if }\alpha=\gamma \textrm{ and }\beta=\delta,\\
\frac{|G|}{|\mathcal{P}|(|\mathcal{P}|-1)}=(q-1)^2q^2& \textrm{if }\alpha\neq \gamma \textrm{ and }\beta\neq \delta,\\ 
0&\textrm{otherwise.}
\end{cases}
\]
This shows that, with a properly chosen ordering of the columns of $A$,
\[
A^T A=(q-1)(q^2-1)q^3I_{|\mathcal{P}|^2}+(q-1)^2q^2(J_{|\mathcal{P}|}-I_{|\mathcal{P}|})\otimes (J_{|\mathcal{P}|}-I_{|\mathcal{P}|}),
\]
(here $I_n,J_n$ denote the identity matrix and the all-$1$ matrix of size $n$, respectively). 

As the spectrum of the matrix $J_{n}$ is $(n^1, 0^{n-1})$ (the
exponents represent the multiplicities), it follows that the spectrum
of the matrix $(J_n-I_n)\otimes (J_n-I_n)$ is
$(((n-1)^2)^1,1^{(n-1)^2},(-(n-1))^{2(n-1)})$.  Thus $A^T A$ is
diagonalizable with spectrum
\[
\left(  \left( (q+1)(q+2) (q-1)^2 q^3 \right)^{\raisebox{.5em}{$1$}},\quad  
  ((q-1)(q^3-1)q^2)^{\raisebox{.5em}{$(|\mathcal{P}|-1)^2$}},\quad  0^{\raisebox{.5em}{$2(|\mathcal{P}|-1)$}} \right). 
\]
This shows that $A^TA$ has
rank $(|\mathcal{P}|-1)^2+1$ and nullity $2(|\mathcal{P}|-1)$. As $A$ is a matrix with real coefficients, the right kernel of $A$ equals the kernel
of $A^TA$. It follows that $A$ has rank $(|\mathcal{P}|-1)^2+1$ and the dimension of the right kernel of $A$ is
$2(|\mathcal{P}|-1)$.

Finally it is easy to verify that the vectors in $(e_{\bar{\alpha}}^1-e_\alpha^1,e_{\bar{\alpha}}^2-e_\alpha^2\mid \alpha\in\mathcal{P}\setminus\{\bar{\alpha}\})$ are linearly independent and, for each $\alpha\in \mathcal{P}$, we
have
$
A(e_{\bar{\alpha}}^1-e_\alpha^1)=A(e_{\bar{\alpha}}^2-e_\alpha^2)=0.
$
\end{proof}

The only property of $\PGL_3(q)$ that is used in the proof of Proposition~\ref{rank} is the $2$-transitivity. In fact, this result for any $2$-transitive group
is shown in~\cite{AhMe}.

\begin{proof}[Proof of Proposition~$\ref{prop:matrixA}$]
Denote by $\tau$ the minimum eigenvalue of $\Gamma_G$ and write $d=|\mathcal{D}|$.
 If $q=2$, then the proof follows with a computation with the
  computer algebra system \texttt{magma}~\cite{magma}, and hence we 
  assume that $q>2$.

 Let $J$ be the subspace of $\mathbb{C}[G]$ spanned by the characteristic vectors $\chi_S$ of the independent sets $S$ of maximal size of $\Gamma_G$, and let $Z$ be the subspace of $\mathbb{C}[G]$ spanned by the columns of $A$. We have to prove that $Z=J$. To do that we introduce two auxiliary subspaces of $\mathbb{C}[G]$.
 Let $I$ be the ideal of $\mathbb{C}[G]$ generated by the irreducible characters $\chi_0$ and $\one$ (where $\chi_0+\one$ is the permutation character of $G$ acting on $\mathcal{P}$), and let $W$ be the direct sum of
  the $d$-eigenspace and the $\tau$-eigenspace of $\Gamma_G$. 

As each column of $A$ is the characteristic vector of a coset of the stabilizer of a point (and hence the characteristic vector of an independent set of maximal size by Corollary~\ref{indysetPGL3}), the column space $Z$ of $A$ is a subspace of $J$, that is, $Z\leq J$.
Moreover, $\dim Z=(q^2+q)^2+1$ by Proposition~\ref{rank}.

By Lemma~\ref{lemma:7}, if $S$ is an independent set of maximal size of
  $\Gamma_G$, then  $\chi_S\in W$, that is, $J\leq W$. Moreover, $\dim W=(q^2+q)^2+\gcd(3,q-1)$ by Lemma~\ref{eigenvaluesPGL3}~(2).

If $\Gcd(3,q-1)=1$, then $\dim W=\dim Z$ and hence $W=Z=J$.
 Suppose then that $\Gcd(3,q-1)=3$. By
  Lemma~\ref{eigenvaluesPGL3}~(3), $W$ is the ideal of $\mathbb{C}[G]$ generated by $\chi_0$,
  $\one$ and by the two non-principal linear characters of $G$, which we
   call $\xi$ and $\xi^2$. We will show that, for each 
  independent set $S$ of maximal size, we have $\xi(S) =\xi^2(S)=0$. Observe that this implies
  that  $\chi_S$ is orthogonal to $\xi$ and $\xi^2$, and hence $\chi_S$ is contained in the  ideal $I$ of $\mathbb{C}[G]$ generated by
  $\chi_0$ and $\one$, that is, $J\leq I$. Since $\dim I=\chi_0(1)^2+1=(q^2+q)^2+1=\dim Z$, we have $I=Z$ and $J=Z$.

Write $G=\PSL_3(q)\cup \PSL_3(q)x\cup \PSL_3(q)x^2$, for some $x\in
G\setminus\PSL_3(q)$.  Let $S$ be an independent set of maximal size
of $\Gamma_G$ and write $S_{i}=S\cap \PSL_3(q)x^i$, for $i\in
\{0,1,2\}$. Now, $S_ix^{-i}$ is an
independent set of $\Gamma_{\PSL_3(q)}$ and, by
Corollary~\ref{indysetPSL3}, $|S_i|=|S_ix^{-i}|\leq
q^3(q^2-1)(q-1)/3$.

As 
\[
|S_0|+|S_1|+|S_2|=|S|=q^3(q^2-1)(q-1),
\] 
we have $|S_0|=|S_1|=|S_2|$. This shows that the independent set
$S$ is equally distributed among the three cosets of $\PSL_3(q)$ in
$G$. Thus
\[
\xi(S)=|S_0|+(\cos(2\pi/3)+i\sin(2\pi/3))|S_1|+(\cos(4\pi/3)+i\sin(4\pi/3))|S_2|=0
\]
and, similarly,
\[
\xi^2(S)=|S_0|+(\cos(4\pi/3)+i\sin(4\pi/3))|S_1|+(\cos(2\pi/3)+i\sin(2\pi/3))|S_2|=0.
\] 
\end{proof}

\section{Proof of Proposition~\ref{lemma:4}}\label{sec:Proof2}

The real work of this paper is proving Proposition~\ref{lemma:4}. It
is not difficult to show that each of the vectors given in the
statement of Proposition~\ref{lemma:4} is in the right kernel of $M$ but what is
difficult is showing that these vectors actually span the whole right kernel. We will show
that the nullity of $M$ is no more than $4(q^2+q) +1$ by constructing
sufficiently many eigenvectors of the matrix $M^TM$ with non-zero eigenvalues.

Set $N=M^T M$. In particular, $N$ is a square $(q^2+q+1)^2$-matrix whose rows and columns are indexed by the ordered pairs
of points of $\PG_2(q)$, and
\begin{equation}\label{equation:1}
N_{(\alpha,\beta),(\gamma,\delta)}=|\{g\in G\mid \alpha^g=\beta,\,\gamma^g=\delta,\,g\textrm{ derangement}\}|,
\end{equation}
for each $\alpha,\beta,\gamma,\delta\in 	\mathcal{P}$. Since $N$ is symmetric, we have
\begin{equation}\label{equation:2}
N_{(\alpha,\beta),(\gamma,\delta)}=N_{(\gamma,\delta),(\alpha,\beta)}.
\end{equation}
Moreover, it follows at once from~\eqref{equation:1} that
\begin{equation}\label{equation:3}
N_{(\alpha,\beta),(\gamma,\delta)}=N_{(\beta,\alpha),(\delta,\gamma)}.
\end{equation}

The first goal in this section is to calculate the entries of the matrix
$N$ as polynomials in $q$. To do this we need to count the number of monic irreducible
polynomials.

\begin{lemma}\label{monicpoly}
  The number of monic irreducible polynomials of degree $1$, $2$ and
  $3$ over $\mathtt{GF}(q)$ is $q$, $(q-1)q/2$ and $(q^2-1)q/3$,
  respectively.
\end{lemma}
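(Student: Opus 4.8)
The plan is to count monic irreducible polynomials over $\mathtt{GF}(q)$ of small degree directly, using the standard fact that a monic polynomial of degree $d$ over $\mathtt{GF}(q)$ is irreducible if and only if it is the minimal polynomial of some element of $\mathtt{GF}(q^d)$ that does not lie in any proper subfield, and that such an element has exactly $d$ Galois conjugates (since $[\mathtt{GF}(q^d):\mathtt{GF}(q)]=d$ is prime for $d\in\{2,3\}$, the only proper subfield is $\mathtt{GF}(q)$ itself). So I would handle the three cases separately.

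For degree $1$: the monic irreducible polynomials are exactly $x-a$ for $a\in\mathtt{GF}(q)$, so there are $q$ of them. For degree $2$: an element of $\mathtt{GF}(q^2)\setminus\mathtt{GF}(q)$ has a monic minimal polynomial of degree $2$ over $\mathtt{GF}(q)$, and conversely every monic irreducible quadratic arises this way; each such polynomial corresponds to its $2$ roots in $\mathtt{GF}(q^2)\setminus\mathtt{GF}(q)$, and this set has $q^2-q$ elements, so the count is $(q^2-q)/2=(q-1)q/2$. For degree $3$: since $3$ is prime, the only subfield of $\mathtt{GF}(q^3)$ properly containing nothing but... more precisely the only proper subfield of $\mathtt{GF}(q^3)$ is $\mathtt{GF}(q)$; hence $\mathtt{GF}(q^3)\setminus\mathtt{GF}(q)$ has $q^3-q$ elements, each with a monic minimal polynomial of degree exactly $3$, and each such polynomial has exactly $3$ roots there, giving $(q^3-q)/3=(q^2-1)q/3$.

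Alternatively, and perhaps more cleanly for a uniform write-up, I would invoke the classical identity $x^{q^n}-x=\prod_{d\mid n}\prod_{f}f(x)$, where the inner product runs over monic irreducible $f$ of degree $d$; comparing degrees gives $q^n=\sum_{d\mid n}d\,I_d(q)$, where $I_d(q)$ is the number we want. Taking $n=1$ gives $I_1(q)=q$; taking $n=2$ gives $q^2=I_1(q)+2I_2(q)$, so $I_2(q)=(q^2-q)/2$; taking $n=3$ gives $q^3=I_1(q)+3I_3(q)$, so $I_3(q)=(q^3-q)/3$. Rewriting these as $q$, $(q-1)q/2$, $(q^2-1)q/3$ finishes the proof.

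There is essentially no obstacle here — this is a standard finite-field counting fact and both approaches are short. The only minor point of care is making sure that in the degree-$2$ and degree-$3$ counts one correctly identifies "roots outside every proper subfield" with "roots outside $\mathtt{GF}(q)$", which holds precisely because $2$ and $3$ are prime so that $\mathtt{GF}(q)$ is the unique proper subfield of $\mathtt{GF}(q^2)$ and of $\mathtt{GF}(q^3)$; this is exactly why only degrees $1,2,3$ admit such a clean formula and is presumably why the lemma is stated only for these degrees.
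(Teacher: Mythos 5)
Your proposal is correct, but it takes a genuinely different route from the paper. The paper counts the \emph{reducible} monic polynomials and subtracts: for degree $2$ it notes there are $q(q+1)/2$ products $(T-x)(T-y)$ with $x,y\in\mathtt{GF}(q)$, so the irreducible count is $q^2-q(q+1)/2=(q-1)q/2$, and it handles degree $3$ by "a similar argument" (which in fact requires splitting the reducible cubics into products of three linear factors and products of a linear factor with an irreducible quadratic, and then simplifying $q^3-q(q+1)(q+2)/6-q^2(q-1)/2$). You instead count directly via the field extensions $\mathtt{GF}(q^2)$ and $\mathtt{GF}(q^3)$: each element outside $\mathtt{GF}(q)$ has an irreducible minimal polynomial of the full degree (because $2$ and $3$ are prime, so $\mathtt{GF}(q)$ is the only proper subfield), and each such polynomial is separable with exactly $d$ roots, giving $(q^2-q)/2$ and $(q^3-q)/3$; your alternative via $T^{q^n}-T=\prod_{d\mid n}\prod_f f(T)$ is the same computation packaged as a degree count. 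Your approach scales better (it is the first step of the general M\"obius-inversion formula for the number of irreducibles of any degree) and avoids the slightly fiddly enumeration of reducible cubics that the paper's "similar argument" glosses over; the paper's approach is more self-contained in that it never needs to leave $\mathtt{GF}(q)$ or invoke separability and the subfield lattice of $\mathtt{GF}(q^d)$. Both are complete and correct.
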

\begin{proof}
  There are $q^{i}$ monic polynomials of degree $i$ with coefficients
  in $\mathtt{GF}(q)$ and, when $i=1$, every such polynomial is
  irreducible over $\mathtt{GF}(q)$.

  Observe that there are $q(q+1)/2$ polynomials of the form
  $(T-x)(T-y)$ with $x,y\in \mathtt{GF}(q)$ (in fact, we have $q$
  polynomials for $x=y$, and $(q-1)q/2$ polynomials for $x\neq
  y$). Therefore the number of monic irreducible polynomials of degree
  $2$ over $\mathtt{GF}(q)$ is $q^2-q(q+1)/2=(q-1)q/2$.

  A similar argument yields the result for polynomials of degree $3$.
\end{proof}

In the following proposition we prove that the entry
$N_{(\alpha,\beta),(\gamma,\delta)}$ of the matrix $N$ is determined
by the geometric position of the $4$-tuple
$(\alpha,\beta,\gamma,\delta)$ in $\PG_2(q)$. First, we define
the following two numbers:
\begin{equation}\label{equation:4}
u=(q-2)q(q^2-1)/3, \qquad v=(q-1)q(q^2-1)/3.
\end{equation}

\begin{proposition}\label{crossratio}Let $\alpha,\beta,\gamma,\delta\in \mathcal{P}$, and let $u$ and $v$ be as in~\eqref{equation:4}. Then
\[
N_{(\alpha,\beta),(\gamma,\delta)}=
\begin{cases}
q^2v&\textrm{if }\alpha\neq\beta, \alpha=\gamma \textrm{ and }\beta=\delta,\\
v&\textrm{if } \alpha,\beta,\gamma,\delta\  \textrm{ are all distinct and}\\
&\textrm{exactly three of }\alpha,\beta,\gamma,\delta \textrm{ are collinear},\\
v&\textrm{if }\alpha=\delta \textrm{ or }\beta=\gamma \textrm{ and not all four of } \alpha,\beta,\gamma,\delta \textrm{ are collinear},\\
u&\textrm{if no three of }\alpha,\beta,\gamma,\delta \textrm{ are collinear},\\
0&\textrm{otherwise.}
\end{cases}
\]
\end{proposition}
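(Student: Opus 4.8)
The plan is to compute $N_{(\alpha,\beta),(\gamma,\delta)}$ directly from~\eqref{equation:1} by counting derangements $g\in G$ subject to the two linear constraints $\alpha^g=\beta$ and $\gamma^g=\delta$. First I would dispose of the degenerate cases. If $\alpha=\gamma$ but $\beta\neq\delta$ (or symmetrically $\beta=\delta$ but $\alpha\neq\gamma$), then no $g$ satisfies both constraints, so $N=0$; similarly $N=0$ if the ``incidence type'' of $(\alpha,\beta,\gamma,\delta)$ is not self-consistent under a projective map — precisely, since $\PGL_3(q)$ is transitive on ordered pairs of distinct points and on ordered triples of collinear (resp.\ non-collinear) points, the constraint $\alpha^g=\beta,\gamma^g=\delta$ is satisfiable only when the collinearity/coincidence pattern of $(\alpha,\gamma)$ matches that of $(\beta,\delta)$. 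In each satisfiable case, the set of $g\in G$ meeting the two constraints is a coset of the (pointwise) stabilizer $H$ of the relevant configuration, and counting such $g$ that are derangements amounts to counting elements of that coset with no fixed point in $\mathcal{P}$ (equivalently, no eigenvector over $\mathtt{GF}(q)$ up to scalars of the corresponding matrix). Using the $2$-transitivity and flag/antiflag transitivity established in the proof of Lemma~\ref{lemma:5}, I can move the configuration into a normal form, at which point the non-derangement count becomes a clean matrix computation.

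Next I would handle the diagonal-type case $\alpha=\gamma\neq\beta=\delta$: here $g$ ranges over a coset of $G_{\alpha\to\beta}$, a group conjugate to the stabilizer of a point, of order $|G|/(q^2+q+1)=(q-1)(q^2-1)q^3$. Among these, the derangements are exactly those with $\alpha$ (hence $\beta$) as only one among many possible fixed points; a direct count (or an application of the values of $|\mathcal D|$ from Lemma~\ref{eigenvaluesPGL3} together with an inclusion–exclusion over which further points are fixed) should yield $q^2 v = q^2(q-1)q(q^2-1)/3$. For the cases where all four points are distinct, the key computation is: place the configuration in normal form so that $g$ is forced to have a prescribed action on a spanning set of points, reducing the free part of $g$ to a $2\times 2$ or $1\times 1$ block, and then count how many choices of that block give a matrix with irreducible characteristic polynomial (no fixed point) versus a reducible one. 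This is exactly where Lemma~\ref{monicpoly} enters: the counts of monic irreducible polynomials of degrees $2$ and $3$ over $\mathtt{GF}(q)$ are $(q-1)q/2$ and $(q^2-1)q/3$, and the numbers $u=(q-2)q(q^2-1)/3$ and $v=(q-1)q(q^2-1)/3$ in~\eqref{equation:4} are visibly of the shape ``(number of irreducible cubics, or a near-variant)'', so the bookkeeping should close.

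I expect the main obstacle to be the careful case analysis of the distinct-points cases — in particular distinguishing (a) exactly three of $\alpha,\beta,\gamma,\delta$ collinear, (b) $\alpha=\delta$ or $\beta=\gamma$ with the four points not all collinear, and (c) no three collinear — and verifying in each that the two given linear constraints pin down $g$ up to a residual $2\times 2$ (in cases (a),(b)) or $1\times 1$ together with an additional scalar/flag-type degree of freedom (in case (c)), then translating ``derangement'' into an irreducibility condition on the residual block and applying Lemma~\ref{monicpoly}. One must also be careful that the duality~\eqref{equation:9} and the symmetries~\eqref{equation:2},~\eqref{equation:3} are used to cut down the number of genuinely distinct cases, and that the normal forms chosen are genuinely representative under $G$ (using transitivity on ordered triples of collinear points, on ordered triples in general position, and on flags/antiflags). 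Once the normal forms are fixed, each remaining count is a short finite-field computation: enumerate the $2\times 2$ matrices over $\mathtt{GF}(q)$ with a prescribed trace/determinant constraint (coming from the fixed images) and count those with irreducible characteristic polynomial, obtaining $v$ in the ``generic'' subcases and $u$ in the fully generic subcase, while the forbidden incidence patterns and the coincidence-mismatch patterns give $0$.
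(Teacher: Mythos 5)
Your proposal follows essentially the same route as the paper: reduce each incidence configuration of $(\alpha,\beta,\gamma,\delta)$ to a normal form using the transitivity properties of $\PGL_3(q)$, parametrize the matrices satisfying the two constraints, and count those whose degree-three characteristic polynomial is irreducible via Lemma~\ref{monicpoly} (the residual freedom is in fact a four-parameter family $(\lambda,x,y,z)$ with $(x,y,z)$ bijecting onto the coefficients of the cubic for each admissible $\lambda$, rather than a $2\times 2$ block, which is exactly why $u$ and $v$ are $(q-2)$ and $(q-1)$ times the number of irreducible cubics). The only substantive points where the paper's execution differs from your sketch are that the case $\alpha=\gamma\neq\beta=\delta$ is dispatched in one line as $|\mathcal{D}|/(q^2+q)$ by $2$-transitivity, and that the case of four distinct collinear points --- which is satisfiable, not a ``coincidence mismatch'' --- yields $0$ by the duality argument that a collineation fixing a line must fix a point.
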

\begin{proof}

  Write $n$ for $N_{(\alpha,\beta),(\gamma,\delta)}$. If
  $\alpha=\beta$ or if $\gamma=\delta$, then $n=0$. In particular, we
  may assume that $\alpha\neq \beta$ and $\gamma\neq \delta$.

  If $\alpha=\gamma$ and $\beta=\delta$, then $n$ is the number of
  derangements mapping $\alpha$ to $\beta$. Since $G$ is transitive of
  degree $q^2+q+1$ and since $G$ contains $(q^2-1)^2q^4/3$
  derangements by Lemma~\ref{eigenvaluesPGL3}~(1), we have
  $n=((q^2-1)^2q^4/3)/(q^2+q)=q^2v$. Further, if
  $\alpha=\gamma$ and $\beta\neq \delta$, or $\alpha\neq\gamma$ and
  $\beta=\delta$, then $n=0$ because in both of these cases, there are
  no permutations that map $\alpha$ to $\beta$ and $\gamma$ to
  $\delta$.

  From this point on we assume that $\alpha \neq \gamma$ and $\beta
  \neq \delta$ and we will consider different cases based on the spatial
  arrangement of $\alpha,\beta,\gamma,\delta$.

  First suppose that $\alpha,\beta,\gamma,\delta$ are
  collinear. We are assuming $\alpha\neq \gamma$ and $\beta\neq
  \delta$, and hence $\alpha\vee\gamma=\beta\vee\delta$.  Assume that
  there is a $g\in G$ with $\alpha^g=\beta$ and $\gamma^g=\delta$,
  then
\[
(\alpha\vee \gamma)^g=\alpha^g\vee\gamma^g=\beta\vee \delta=\alpha\vee\gamma.
\]
In particular, $g$ fixes the line $\alpha\vee\gamma$. By the duality
between points and lines of $\PG_2(q)$, we get that $g$ fixes
some point of $\PG_2(q)$ and cannot be a derangement; hence, in this case, $n=0$.

\smallskip

Suppose that exactly three of $\alpha,\beta,\gamma,\delta$ are
collinear.  There are two cases to consider here: first, when all of
$\alpha,\beta,\gamma$ and $\delta$ are distinct and second, when
$|\{\alpha,\beta,\gamma,\delta\}|=3$.  In the second case either $\alpha =
\delta$ or $\beta = \gamma$ (but not both).

Assume that $|\{\alpha,\beta,\gamma,\delta\}|=4$. From the
symmetries in~\eqref{equation:2} and~\eqref{equation:3}, we may assume
that $\alpha,\gamma$ and $\delta$ are collinear. Let $\ell$ be the
line spanned by $\alpha,\gamma,\delta$. Since $G$ is transitive on the
lines in $\mathcal{L}$ and since the stabilizer $G_\ell$ of the line $\ell$
is $2$-transitive on the points in $\ell$ and transitive on the points
in $\mathcal{P} \setminus \ell$, we may assume that
$\ell=\langle\varepsilon_1,\varepsilon_2\rangle$,
$\alpha=\langle\varepsilon_1\rangle$,
$\gamma=\langle\varepsilon_2\rangle$,
$\delta=\langle\varepsilon_1+\varepsilon_2\rangle$ and
$\beta=\langle\varepsilon_3\rangle$. In particular, if $g\in G$ and
$\alpha^g=\beta$, $\gamma^g=\delta$, then
\[g=
\left[
\begin{array}{ccc}
0&\lambda&x\\
0&\lambda&y\\
1&0&z
\end{array}
\right],
\]
for some $\lambda,x,y,z\in \mathtt{GF}(q)$.
The element $g$ is a derangement of $G$ if and only if the characteristic
polynomial $p_{\lambda,x,y,z}(T)$ of $g$ has no root in
$\mathtt{GF}(q)$, and since $p_{\lambda,x,y,z}(T)$ has degree $3$,
this happens exactly when $p_{\lambda,x,y,z}(T)$ is irreducible.
A simple calculation shows that
\[
p_{\lambda,x,y,z}(T)=T^3-(\lambda+z)T^2-(-\lambda z+x)T-\lambda(y-x).
\]
We claim that there are exactly $v$ choices of $(\lambda,x,y,z)$ such
that $p_{\lambda,x,y,z}(T)$ is irreducible. Observe that $\lambda\neq
0$ because $g$ is invertible. It is a
simple computation to check that, for every
$\lambda\in\mathtt{GF}(q)\setminus\{0\}$, and for every $a,b,c\in
\mathtt{GF}(q)$, there exists a unique choice of $x,y,z\in
\mathtt{GF}(q)$ with $$T^3-aT^2-bT-c=p_{\lambda,x,y,z}(T).$$ By
Lemma~\ref{monicpoly}, there are $(q^2-1)q/3$ choices of $a,b,c\in
\mathtt{GF}(q)$ with $T^3-aT^2-bT-c$ irreducible. Therefore, for every
given $\lambda\in\mathtt{GF}(q)\setminus\{0\}$, there exist
$(q^2-1)q/3$ choices for $g$. As we have $q-1$ choices for $\lambda$,
we get $(q-1)q(q^2-1)/3=v$ choices for $g$ in total.

Consider the case when $|\{\alpha,\beta,\gamma,\delta\}|=3$; we have
seen that this implies that either $\alpha=\delta$ or $\beta =
\gamma$.  From the symmetries in~\eqref{equation:2}
and~\eqref{equation:3}, we may assume that $\alpha=\delta$ and that
$\alpha,\beta,\gamma$ are non-collinear (otherwise all four points are
collinear).  Now, replacing $\alpha$,
$\beta$ and $\gamma$ if necessary by $\alpha^h,\beta^h$ and $\gamma^h$
for $h\in G$, we get $\alpha=\langle\varepsilon_1\rangle$,
$\beta=\langle\varepsilon_3\rangle$ and
$\gamma=\langle\varepsilon_2\rangle$. In particular, if $g\in G$ and
$\alpha^g=\beta$, $\gamma^g=\delta$, then
\[g=
\left[
\begin{array}{ccc}
0&\lambda&x\\
0&0&y\\
1&0&z
\end{array}
\right],
\]
for some $\lambda,x,y,z\in \mathtt{GF}(q)$. Arguing as above, the
element $g$ is a derangement of $G$ if and only if the characteristic
polynomial $p_{\lambda,x,y,z}(T)$ of $g$ is irreducible. Now,
\[
p_{\lambda,x,y,z}(T)=T^3-zT^2-xT-\lambda y.
\]
It follows at once from
Lemma~\ref{monicpoly} that there are exactly $v$ choices of
$(\lambda,x,y,z)$ such that $p_{\lambda,x,y,z}(T)$ is irreducible.

\smallskip

Suppose that no three of $\alpha,\beta,\gamma,\delta$ are collinear
(this implies that all four points are distinct). Since $G$ acts
transitively on the ordered $4$-tuples of non-collinear points and
since $N$ is $G$-invariant, we may assume that
$\alpha=\langle\varepsilon_1\rangle$,
$\beta=\langle\varepsilon_3\rangle$,
$\gamma=\langle\varepsilon_2\rangle$ and $\delta=\langle
\varepsilon_1+\varepsilon_2+\varepsilon_3\rangle$. In particular, if
$g\in G$ and $\alpha^g=\beta$, $\gamma^g=\delta$, then
\[g=
\left[
\begin{array}{ccc}
0&\lambda&x\\
0&\lambda&y\\
1&\lambda&z
\end{array}
\right],
\]
for some $\lambda,x,y,z\in \mathtt{GF}(q)$. Arguing as usual, 
the element $g$ is a derangement of $G$ if and only if the characteristic
polynomial
\[
p_{\lambda,x,y,z}(T)=T^3-(\lambda+z)T^2-(\lambda y-\lambda z+x)T-\lambda(y-x)
\]
is irreducible. We claim that there are exactly $u$ choices of
$(\lambda,x,y,z)$ such that $p_{\lambda,x,y,z}(T)$ is
irreducible. Observe that $\lambda\neq 0$ because $g$ is invertible,
and that when $\lambda= -1$ the polynomial $p_{\lambda,x,y,z}(T)$ has root
$-1$. Thus $\lambda\notin\{ 0,-1\}$. It is another simple computation to check
that, for every $\lambda\in\mathtt{GF}(q)\setminus\{0,-1\}$, and for
every $a,b,c\in \mathtt{GF}(q)$, there exists a unique choice of
$x,y,z\in \mathtt{GF}(q)$ with 
\[
T^3-aT^2-bT-c=p_{\lambda,x,y,z}(T).
\]
By Lemma~\ref{monicpoly}, there are $(q^2-1)q/3$ choices of $a,b,c\in
\mathtt{GF}(q)$ with $T^3-aT^2-bT-c$ irreducible. Therefore, for every
given $\lambda\in\mathtt{GF}(q)\setminus\{0,-1\}$, there exist
$(q^2-1)q/3$ choices for $g$. As we have $q-2$ choices for $\lambda$,
we get $(q-2)(q-1)^2q/3=u$ choices for $g$ in total.
\end{proof}

Using Proposition~\ref{crossratio} we deduce a number of properties of
$N$; the first two being that each of $Ne_\alpha^1$, $Ne_\alpha^2$ and $Ne_\ell$ (where
$e_\alpha^1$, $e_\alpha^2$ and $e_\ell$ are defined in~\eqref{eq:ealpha}) are
equal to a multiple of the vector
\begin{equation}\label{eq:defne}
e=\sum_{\substack{\beta,\beta'\in\mathcal{P} \\ \beta\neq \beta'}}e_{\beta\beta'}.
\end{equation}
The $(\alpha,\beta)$-coordinate of $e$ is $1$, unless $\alpha =
\beta$, and in this case the $(\alpha,\beta)$-coordinate is $0$.

\begin{lemma}\label{lemma:Nealpha1}
  For $\alpha\in\mathcal{P}$, we have $Ne_\alpha^1=Ne_\alpha^2=q^2ve$
  (where $v$ is defined in~\eqref{equation:4}).
\end{lemma}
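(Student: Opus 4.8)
The plan is to compute $Ne_\alpha^1$ coordinate-by-coordinate using the explicit formula for the entries of $N$ from Proposition~\ref{crossratio}. Fix a point $\alpha$. The $(\gamma,\delta)$-coordinate of $Ne_\alpha^1$ is $\sum_{\beta\in\mathcal{P}} N_{(\gamma,\delta),(\alpha,\beta)}$, so I would split into the cases $\gamma=\delta$ and $\gamma\neq\delta$. When $\gamma=\delta$, every term $N_{(\gamma,\gamma),(\alpha,\beta)}$ is $0$ by Proposition~\ref{crossratio} (the first argument has a repeated coordinate), so the coordinate is $0$, matching the corresponding coordinate of $e$.

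When $\gamma\neq\delta$, I would fix such a pair and sum $N_{(\gamma,\delta),(\alpha,\beta)}$ over all $\beta\in\mathcal{P}$. By the symmetry~\eqref{equation:2} this equals $\sum_\beta N_{(\alpha,\beta),(\gamma,\delta)}$, which is the $(\gamma,\delta)$-coordinate of $N(\sum_\beta e_{\alpha\beta}) = Ne_\alpha^1$ — consistent, but I actually want to sum in the form where $(\alpha,\beta)$ varies over the second index, so let me instead directly evaluate $\sum_{\beta} N_{(\gamma,\delta),(\alpha,\beta)}$. As $\beta$ ranges over $\mathcal{P}$, the four-tuple $(\gamma,\delta,\alpha,\beta)$ runs through various geometric configurations, and I would partition $\mathcal{P}$ accordingly: the case $\beta=\delta$ forces a repeated coordinate in the second slot so contributes $0$ unless $\beta=\delta$ gives a nonzero diagonal-type term (in fact $N_{(\gamma,\delta),(\alpha,\delta)}=0$ since $\gamma\neq\alpha$ would be needed — here one must be careful about whether $\alpha=\gamma$). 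The cleanest route is to observe that the sum telescopes against the structure: grouping the $\beta$'s by whether $\beta$ lies on the line $\gamma\vee\delta$, on the line through $\alpha$ and one of $\gamma,\delta$, or in general position, and then using the counts $q+1$ and $q^2+q+1$ for line-sizes and plane-size together with the values $q^2v$, $v$, $u$, $0$ from Proposition~\ref{crossratio}. After collecting terms the total should simplify to exactly $q^2v$, independent of the (non-diagonal) pair $(\gamma,\delta)$, which is precisely the assertion $Ne_\alpha^1=q^2ve$.

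For $Ne_\alpha^2$ the argument is identical after invoking the symmetry~\eqref{equation:3}, which swaps the roles of the two coordinates in each pair; alternatively one notes that $e_\alpha^2$ is obtained from $e_\alpha^1$ by the coordinate involution $e_{\beta\beta'}\mapsto e_{\beta'\beta}$, and $N$ commutes with this involution by~\eqref{equation:3}, while $e$ is fixed by it. So $Ne_\alpha^2=q^2ve$ follows formally from $Ne_\alpha^1=q^2ve$.

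The main obstacle is purely bookkeeping: correctly enumerating how many of the $q^2+q+1$ choices of $\beta$ fall into each geometric class relative to the fixed triple $(\gamma,\delta,\alpha)$ — and this count depends on whether $\alpha\in\{\gamma,\delta\}$, whether $\alpha\in\gamma\vee\delta$, etc., so several sub-cases must be checked separately. The payoff is that in every sub-case the weighted sum collapses to the same value $q^2v$; verifying this uniform cancellation (e.g.\ that the "defect" from having $\alpha$ on or off the line $\gamma\vee\delta$ exactly cancels against the varying number of collinear $\beta$'s) is the one genuinely delicate point, and I expect it to hinge on the identity $q^2v = (q+1)v + (\text{something})u + \dots$ that Proposition~\ref{crossratio}'s values were engineered to satisfy.
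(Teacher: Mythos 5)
Your proposal follows essentially the same route as the paper: compute the $(\gamma,\delta)$-coordinate of $Ne_\alpha^1$ as $\sum_{\beta}N_{(\gamma,\delta),(\alpha,\beta)}$, split into cases according to whether $\gamma=\delta$, whether $\alpha\in\{\gamma,\delta\}$, and whether $\alpha$ lies on $\gamma\vee\delta$, count the $\beta$'s in each geometric class, and verify the weighted sum collapses to $q^2v$ (the key cancellation being $(q-1)^2u+3(q-1)v+v=q^2v$), with $Ne_\alpha^2$ then following from the symmetry~\eqref{equation:3}. The plan is correct and matches the paper's proof; only the final arithmetic remains to be written out.
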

\begin{proof}
  From~\eqref{equation:3}, it suffices to show that
  $Ne_\alpha^1=q^2ve$. From the definition of $e_\alpha^1$, we have
  $Ne_\alpha^1=\sum_{\beta\in\mathcal{P}}Ne_{\alpha\beta}$. In
  particular, by considering the $(\gamma,\delta)$-coordinate of the vector
  $Ne_\alpha^1$, it suffices to prove that
\begin{equation}\label{onemore}
\sum_{\beta\in\mathcal{P}}N_{(\gamma,\delta),(\alpha,\beta)}=
\begin{cases}
q^2v&\textrm{if }\gamma\neq\delta,\\
0&\textrm{if }\gamma=\delta.
\end{cases}
\end{equation}
Clearly, the $(\gamma,\delta)$-row of $N$ is $0$ when $\gamma=\delta$.
Next we consider three cases.

First, suppose that $\alpha=\gamma$. Then Proposition~\ref{crossratio}
shows that the only non-zero summand in the left-hand side of~\eqref{onemore}
occurs for $\beta=\delta$ with value $q^2v$.

Second, suppose that $\alpha\in \gamma\vee\delta$ and $\alpha\neq \gamma$. Now
Proposition~\ref{crossratio} shows that the only non-zero summands in
the left-hand side of~\eqref{onemore} occur when $\beta\notin \gamma\vee\delta$ with value
$v$. Since we have $q^2$ choices for $\beta$, the equality in~\eqref{onemore}
follows.

Finally, suppose that $\alpha\notin \gamma\vee\delta$. Observe that
there are $(q-1)^2$ choices for a point $\beta$ with
$\alpha,\beta,\gamma,\delta$ non-collinear and hence, by
Proposition~\ref{crossratio}, these choices of $\beta$ contribute
$(q-1)^2u$ to the summation in~\eqref{onemore}. Next, we have $3(q-1)$
choices for a point $\beta$ with $|\{\alpha,\beta,\gamma,\delta\}|=4$
and with exactly three of $\alpha,\beta,\gamma,\delta$ collinear (namely,
$q-1$ choices for $\beta$ depending on whether $\alpha,\beta,\gamma$,
or $\alpha,\beta,\delta$, or $\beta,\gamma,\delta$ are
collinear). Each of these terms contributes $v$. In view of
Proposition~\ref{crossratio}, the only remaining choice for $\beta$
that gives a non-zero contribution is when $\beta=\gamma$, and this
single value contributes $v$ to the sum. Thus, in this case the
left-hand side of~\eqref{onemore} equals
\[
(q-1)^2u+3(q-1)v+v=q^2v.
\]
\end{proof}

\begin{lemma}\label{lemma:Neell}
  For $\ell\in\mathcal{L}$, we have $Ne_\ell=q^2ve$ (where $v$ is
  defined in~\eqref{equation:4}).
\end{lemma}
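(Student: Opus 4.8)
The plan is to imitate the proof of Lemma~\ref{lemma:Nealpha1}. Expanding $e_\ell$ as in~\eqref{eq:ealpha}, the $(\gamma,\delta)$-coordinate of $Ne_\ell$ is $\sum_{\beta,\beta'\in\ell}N_{(\gamma,\delta),(\beta,\beta')}$, and one must show that this equals the $(\gamma,\delta)$-coordinate of $q^2ve$, namely $0$ if $\gamma=\delta$ and $q^2v$ if $\gamma\ne\delta$. The diagonal terms $\beta=\beta'$ contribute nothing, since a derangement fixes no point and hence $N_{(\gamma,\delta),(\beta,\beta)}=0$; and if $\gamma=\delta$ then the whole $(\gamma,\delta)$-row of $N$ vanishes. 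So from now on assume $\gamma\ne\delta$ and sum only over ordered pairs $(\beta,\beta')$ of \emph{distinct} points of $\ell$, reading each summand off Proposition~\ref{crossratio}. It is also convenient to note, using~\eqref{equation:3} and the fact that the set of ordered pairs from $\ell$ is stable under swapping coordinates, that the $(\gamma,\delta)$- and $(\delta,\gamma)$-coordinates of $Ne_\ell$ coincide, so in the argument below $\gamma$ and $\delta$ may be interchanged freely.

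Then comes the case analysis, according to how many of $\gamma,\delta$ lie on $\ell$. \textbf{(a)} If $\gamma,\delta\in\ell$ then $\gamma\vee\delta=\ell$, so for every $\beta,\beta'\in\ell$ all four points $\gamma,\delta,\beta,\beta'$ are collinear; by Proposition~\ref{crossratio} a summand with $\beta\ne\gamma$ and $\beta'\ne\delta$ is $0$, a summand with $\beta=\gamma$ forces $\beta'=\delta$ and conversely, and the single term $(\beta,\beta')=(\gamma,\delta)$ equals $q^2v$; hence the coordinate is $q^2v$. \textbf{(b)} If exactly one of $\gamma,\delta$ lies on $\ell$, assume by the symmetry above that $\gamma\in\ell$ and $\delta\notin\ell$; for $\beta,\beta'\in\ell$ the three points $\gamma,\beta,\beta'$ are collinear while $\delta$ is off that line, and Proposition~\ref{crossratio} gives $0$ for the terms with $\beta=\gamma$ (which would force $\beta'=\delta\notin\ell$), $v$ for each of the $q$ terms with $\gamma=\beta'\ne\beta$, and $v$ for each of the $q(q-1)$ terms with $\beta,\beta'\ne\gamma$ and $\beta\ne\beta'$; hence the coordinate is $qv+q(q-1)v=q^2v$. \textbf{(c)} If neither $\gamma$ nor $\delta$ lies on $\ell$, put $m=\ell\wedge(\gamma\vee\delta)$, a single point of $\ell$ distinct from $\gamma$ and $\delta$; for distinct $\beta,\beta'\in\ell$ the four points $\gamma,\delta,\beta,\beta'$ are automatically all distinct, and exactly three of them are collinear precisely when $\beta=m$ or $\beta'=m$ (a point of $\ell$ lies on $\gamma\vee\delta$ only if it equals $m$, and no triple containing both $\beta$ and $\beta'$ can be collinear since $\gamma,\delta\notin\ell$); hence the $q$ terms with $\beta=m$ and the $q$ terms with $\beta'=m$ each equal $v$, the remaining $q(q-1)$ terms each equal $u$, and the coordinate is $2qv+q(q-1)u$. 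This last quantity equals $q^2v$ exactly when $(q-1)u=(q-2)v$, which is immediate from the definitions in~\eqref{equation:4}.

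The only genuine difficulty is the bookkeeping in case (c): correctly matching each ordered pair $(\beta,\beta')$ of points of $\ell$ to one of the five alternatives of Proposition~\ref{crossratio} and counting the pairs of each type. The geometric observation that keeps this under control is that the points of $\ell$ lying on the secant $\gamma\vee\delta$ reduce to the single point $m=\ell\wedge(\gamma\vee\delta)$; once this is in hand, case (c) closes via the elementary identity $(q-1)u=(q-2)v$, and cases (a) and (b) are easier instances of the same counting scheme.
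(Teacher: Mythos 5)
Your proof is correct and follows exactly the route the paper intends: the paper's own ``proof'' of this lemma is only a one-line pointer to a case-by-case analysis via Proposition~\ref{crossratio} in the spirit of Lemma~\ref{lemma:Nealpha1}, and you have carried out that analysis in full, with the counts in all three cases (including the identity $(q-1)u=(q-2)v$ closing case (c)) checking out.
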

\begin{proof}
  From the definition of $e_\ell$, we have
  $Ne_\ell=\sum_{\alpha,\beta\in\ell}Ne_{\alpha\beta}$. In particular,
  by taking the $(\gamma,\delta)$-coordinate of the vector $Ne_\ell$,
  it suffices to prove that
\[
\qquad \sum_{\alpha,\beta\in \ell}N_{(\gamma,\delta),(\alpha,\beta)}=
\begin{cases}
q^2v&\textrm{if }\gamma\neq \delta,\\
0&\textrm{if }\gamma=\delta.
\end{cases}
\]
Now, the proof follows with a case-by-case analysis (in the same
spirit as the proof of Lemma~\ref{lemma:Nealpha1}) and using
Proposition~\ref{crossratio}.
\end{proof}

\begin{lemma}\label{lemma:-1}
Let $\bar{\alpha}\in\mathcal{P}$ and let $\bar{\ell}\in\mathcal{L}$. Then the subspace
\[
V_0=\langle e_{\alpha\alpha},\,e_{\bar{\alpha}}^1-e_\alpha^1,\,e_{\bar{\alpha}}^2-e_{\alpha}^2,\,e_{\bar{\ell}}-e_\ell
 \mid \alpha\in \mathcal{P} ,\ell\in\mathcal{L}  \rangle
\]
of $V$ has dimension $4(q^2+q)+1$ and is contained in the right kernel of $M$. In particular, $V_0$ is contained in the kernel of $N$.
\end{lemma}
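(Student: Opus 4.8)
The plan is to verify the two assertions of Lemma~\ref{lemma:-1} in turn: first that $V_0$ has the claimed dimension $4(q^2+q)+1$, and second that every listed spanning vector lies in the right kernel of $M$ (which then immediately gives containment in the kernel of $N=M^TM$). For the dimension count, I would observe that the generators naturally split into four blocks: the $q^2+q+1$ vectors $e_{\alpha\alpha}$, the $q^2+q$ vectors $e_{\bar\alpha}^1-e_\alpha^1$ with $\alpha\neq\bar\alpha$, the $q^2+q$ vectors $e_{\bar\alpha}^2-e_\alpha^2$ with $\alpha\neq\bar\alpha$, and the $q^2+q$ vectors $e_{\bar\ell}-e_\ell$ with $\ell\neq\bar\ell$, for a total of $4(q^2+q)+1$ generators. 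It then suffices to show these are linearly independent. The $e_{\alpha\alpha}$ are supported on the ``diagonal'' coordinates $(\alpha,\alpha)$, while the other three blocks, being differences $e_{\bar x}-e_x$ of sums each of which has total mass spread only over off-diagonal or mixed coordinates, live (after subtracting diagonal parts) in the orthogonal complement of the diagonal; more carefully, $e_\alpha^1$ and $e_\alpha^2$ each contain exactly one diagonal term $e_{\alpha\alpha}$ and $e_\ell$ contains $q+1$ of them, so one first projects off the diagonal to reduce to independence of the off-diagonal parts. There the argument is exactly the one already invoked in the proof of Proposition~\ref{rank} for the $e^1,e^2$ families (they are independent because $A$ has nullity $2(q^2+q)$ and these $2(q^2+q)$ vectors lie in its kernel), combined with the transparent fact that the $e_{\bar\ell}-e_\ell$ are independent among themselves (each involves a distinct line $\ell$) and, by a rank/counting comparison, independent of the $e^1,e^2$ families.

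For the kernel assertion, I would treat the four types of generator separately. That $M(e_{\bar\alpha}^1-e_\alpha^1)=M(e_{\bar\alpha}^2-e_\alpha^2)=0$ is inherited directly from Proposition~\ref{rank}: these vectors lie in the right kernel of $A=\binom{M}{B}$, hence in particular $M$ annihilates them. For $e_{\alpha\alpha}$, the $(\gamma,\delta)$-coordinate of $Me_{\alpha\alpha}$ counts derangements $g$ with $\gamma^g=\delta$ and $\alpha^g=\alpha$; but a derangement fixes no point, so this count is $0$ for every $(\gamma,\delta)$, giving $Me_{\alpha\alpha}=0$. For $e_{\bar\ell}-e_\ell$, the $(\gamma,\delta)$-coordinate of $Me_\ell$ counts derangements $g$ with $\gamma^g=\delta$ that map some pair of points of $\ell$ to another pair of points of $\ell$; the key geometric point (the duality argument already used in the proof of Proposition~\ref{crossratio}) is that if a derangement moved two points of $\ell$ into $\ell$ it would fix the line $\ell$, hence fix a point by the point--line duality preserved by $\PGL_3(q)$, contradicting that $g$ is a derangement — so in fact $Me_\ell=0$ outright for every line $\ell$, and a fortiori $M(e_{\bar\ell}-e_\ell)=0$. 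Putting these together, $M$ kills every generator of $V_0$, so $V_0$ is contained in the right kernel of $M$; and since $Nx=M^TMx$, it is contained in the kernel of $N$.

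The only genuinely delicate step is the linear independence establishing $\dim V_0=4(q^2+q)+1$: one must check that no nontrivial relation mixes the $e_{\bar\ell}-e_\ell$ block with the $e^1$ and $e^2$ blocks. I expect this to be the main obstacle, and I would handle it by a careful bookkeeping of supports — restricting a putative relation to diagonal coordinates, to coordinates $(\gamma,\delta)$ with $\gamma$ fixed, and to coordinates supported inside a single line — and deducing that all coefficients vanish; alternatively one can count $3(q^2+q)+1$ for the span of the diagonal and $e^1,e^2$ families (which sits inside the span of the columns of $A$ together with the diagonal, of dimension $(q^2+q)^2+1+(q^2+q+1)$ — here one uses Proposition~\ref{rank}) and then argue that a generic $e_{\bar\ell}-e_\ell$ is not in that span by exhibiting, via Proposition~\ref{crossratio} or Lemma~\ref{lemma:5}, a coordinate on which it is nonzero while the others vanish. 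Everything else is either a direct citation of Proposition~\ref{rank} or the elementary observation that derangements fix nothing.
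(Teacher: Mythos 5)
Your argument for the block $e_{\bar{\ell}}-e_\ell$ contains a genuine error: the claim that $Me_\ell=0$ outright is false. For a derangement $g$ one has $\ell^g\neq\ell$ (by point--line duality $g$ fixes no line), so $\ell\wedge\ell^g$ is a single point and exactly one point $\beta$ of $\ell$, namely $\beta=(\ell\wedge\ell^g)^{g^{-1}}$, satisfies $\beta^g\in\ell$. Hence $(Me_\ell)_g=|\{\beta\in\ell\mid\beta^g\in\ell\}|=1$ for every derangement $g$, i.e.\ $Me_\ell=\one$, not $0$ (this is exactly the computation in~\eqref{equation:13} restricted to the derangement rows, where the case $g\in G_\ell$ never occurs). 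Your duality argument shows only that no derangement maps \emph{two} points of $\ell$ into $\ell$; it does not rule out one. The conclusion $M(e_{\bar{\ell}}-e_\ell)=\one-\one=0$ survives, but for the reason that all the vectors $Me_\ell$ are equal, not that each vanishes. Note that if $Me_\ell$ really were $0$ for every $\ell$, the right kernel of $M$ would contain all $q^2+q+1$ vectors $e_\ell$ rather than just their differences, contradicting Proposition~\ref{lemma:4}. (Your handling of the other two blocks is fine: $Me_{\alpha\alpha}=0$ because no derangement fixes $\alpha$, and citing Proposition~\ref{rank} for $M(e_{\bar{\alpha}}^1-e_\alpha^1)=M(e_{\bar{\alpha}}^2-e_\alpha^2)=0$ is a legitimate shortcut; the paper instead deduces these from $Ne_\alpha^i=q^2ve$.)

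The second, larger gap is that the dimension count --- which you correctly identify as the delicate point --- is never actually carried out. Asserting that the $e_{\bar{\ell}}-e_\ell$ are independent of the $e^1,e^2$ families ``by a rank/counting comparison,'' or that ``a generic $e_{\bar{\ell}}-e_\ell$ is not in that span,'' does not establish that all $q^2+q$ line-differences are independent modulo the other three blocks, which is what the dimension $4(q^2+q)+1$ requires. The paper resolves this by taking an arbitrary vanishing linear combination with coefficients $a_\alpha,b_\alpha,c_\alpha,d_\ell$ and extracting coordinates: the $(\gamma,\delta)$-coordinate for $\gamma\neq\delta$ off the distinguished point and line gives $b_\gamma+c_\delta+d_{\gamma\vee\delta}=0$, a connectedness argument forces $b_\alpha\equiv b$, $c_\alpha\equiv c$, $d_\ell\equiv d$ with $b+c+d=0$, and the coordinates $(\bar{\alpha},\delta)$ and $(\gamma,\bar{\alpha})$ give $(q^2+q)b-c-d=0$ and $-b+(q^2+q)c-d=0$, whence $b=c=d=0$ and then $a_\alpha=0$ from the diagonal. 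Some such explicit computation is needed; your proposal gestures at the right kind of bookkeeping but does not supply it.
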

\begin{proof}
  As the $(\alpha,\alpha)$-column of $M$ is zero, the vector
  $e_{\alpha\alpha}$ is clearly in the right kernel of $M$. From
  Lemma~\ref{lemma:Nealpha1}, the vectors
  $e_{\bar{\alpha}}^1-e_\alpha^1$ and
  $e_{\bar{\alpha}}^2-e_{\alpha}^2$ are in the kernel of $N$ and since
  $N=M^TM$, they are also in the right kernel of $M$. Similarly, by
  Lemma~\ref{lemma:Neell}, the vector $e_{\bar{\ell}}-e_\ell$ is in
  the right kernel of $M$.

  Finally we need to confirm that the dimension of $V_0$ is
  $4(q^2+q)+1$. Assume
\[
  \sum_{\alpha \in \mathcal{P}} a_\alpha e_{\alpha\alpha}
+ \sum_{\alpha \in \mathcal{P} \setminus \{\bar{\alpha}\}} b_\alpha \left( e_{\bar{\alpha}}^1-e_\alpha^1\right)
+ \sum_{\alpha \in \mathcal{P} \setminus \{\bar{\alpha}\}} c_\alpha \left( e_{\bar{\alpha}}^2-e_\alpha^2\right)
+ \sum_{\ell\in\mathcal{L}\setminus\{\bar{\ell}\}} d_\ell \left(e_{\bar{\ell}}-e_\ell \right)  = 0,
\]
for some scalars $a_\alpha, b_\alpha, c_\alpha$ and $d_\ell$. Let $\gamma$ and $\delta$ be distinct elements  in $\mathcal{P} \setminus\{\bar{\alpha}\}$ with $\gamma \vee \delta \neq \bar{\ell}$. The $(\gamma, \delta)$-coordinate in the above linear combination yields
\begin{equation}\label{oneonemore}
b_\gamma + c_\delta + d_{\gamma \vee \delta} = 0.
\end{equation}
Observe that if
$\gamma'\in(\gamma\vee\delta)\setminus\{\bar{\alpha},\delta\}$ and
$\delta'\in (\gamma\vee\delta)\setminus\{\bar{\alpha},\gamma\}$, then
(by considering the $(\gamma',\delta)$-coordinate and the
$(\gamma,\delta')$-coordinate) we also get the equations
\[
b_{\gamma'}+c_\delta+d_{\gamma\vee \delta}=0, \qquad 
b_{\gamma}+c_{\delta'}+d_{\gamma\vee \delta}=0.
\] 
Hence $b_\gamma=b_{\gamma'}$ and
$c_{\delta}=c_{\delta'}$. From this, an easy connectedness argument
yields that there exist $b,c\in \mathbb{C}$ with $b=b_\alpha$ and
$c=c_\alpha$, for every
$\alpha\in\mathcal{P}\setminus\{\bar{\alpha}\}$.  This, in turn,
implies that there exists a scalar $d\in\mathbb{C}$ with $d = d_\ell$,
for every $\ell\in \mathcal{L} \setminus \{\bar{\ell}\}$. Note
that~\eqref{oneonemore} gives $b+c+d=0$.

Let $\gamma$ and $\delta$ be distinct elements in $\mathcal{P}\setminus\{\bar{\alpha}\}$ with $\gamma\vee \delta\neq\bar{\ell}$. By considering the $(\bar{\alpha}, \delta)$-coordinate and
the $(\gamma, \bar{\alpha})$-coordinate, we get
\[
(q^2+q) b - c - d = 0 \quad\textrm{and}\quad -b + (q^2+q) c - d = 0.
\]
Putting these two equations together with $b+c+d=0$, we get
$b=c=d=0$. 

Finally, for $\alpha\in \mathcal{P}$, by considering the $(\alpha, \alpha)$-coordinate, we obtain 
$a_\alpha = 0$.
\end{proof}

\begin{lemma}\label{lemma:3}
The vector $e$ is an eigenvector of $N$  with eigenvalue $(q^2+q+1)q^2v$.
\end{lemma}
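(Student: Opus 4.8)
The plan is to reduce the computation of $Ne$ to the already-established action of $N$ on the vectors $e_\alpha^1$, rather than to grind through Proposition~\ref{crossratio} directly. The key observation is that $e$ decomposes cleanly in terms of the vectors introduced in~\eqref{eq:ealpha}: since $\sum_{\alpha\in\mathcal{P}}e_\alpha^1=\sum_{\alpha,\beta\in\mathcal{P}}e_{\alpha\beta}=\one$ is the all-ones vector of $V$, and $e$ is obtained from $\one$ by deleting exactly the diagonal entries, we have
\[
e=\sum_{\alpha\in\mathcal{P}}e_\alpha^1-\sum_{\alpha\in\mathcal{P}}e_{\alpha\alpha}.
\]

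First I would record that $Ne_{\alpha\alpha}=0$ for every $\alpha\in\mathcal{P}$. This is immediate because the $(\alpha,\alpha)$-column of $M$ is zero (no derangement of $G$ fixes $\alpha$), so the $(\alpha,\alpha)$-column of $N=M^TM$ is zero as well; alternatively it is read off directly from Proposition~\ref{crossratio}, which gives $N_{(\gamma,\delta),(\alpha,\alpha)}=0$ for all $\gamma,\delta$. Next I would invoke Lemma~\ref{lemma:Nealpha1}, which states that $Ne_\alpha^1=q^2ve$ for each $\alpha\in\mathcal{P}$. Combining these two facts with the displayed decomposition of $e$ and with $|\mathcal{P}|=q^2+q+1$ yields
\[
Ne=\sum_{\alpha\in\mathcal{P}}Ne_\alpha^1-\sum_{\alpha\in\mathcal{P}}Ne_{\alpha\alpha}=(q^2+q+1)q^2v\,e,
\]
which is precisely the claim.

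There is essentially no obstacle here: once Lemma~\ref{lemma:Nealpha1} is in hand, this lemma is pure bookkeeping, and the only points needing a moment's thought — the decomposition of $e$ and the vanishing of the diagonal columns of $N$ — are both elementary. (One could instead argue directly by taking the $(\gamma,\delta)$-coordinate of $Ne$ and summing the entries supplied by Proposition~\ref{crossratio} over all off-diagonal pairs $(\alpha,\beta)$, splitting into cases according to the position of $\alpha,\beta$ relative to the line $\gamma\vee\delta$; this reproduces the count $(q-1)^2u+3(q-1)v+v=q^2v$ from the proof of Lemma~\ref{lemma:Nealpha1}, multiplied through by $|\mathcal{P}|$, but the decomposition argument is cleaner and avoids repeating that case analysis.)
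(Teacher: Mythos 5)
Your proof is correct and is essentially identical to the paper's: both decompose $e$ as $\sum_{\alpha\in\mathcal{P}}(e_\alpha^1-e_{\alpha\alpha})$, observe that $Ne_{\alpha\alpha}=0$, and then apply Lemma~\ref{lemma:Nealpha1} and sum over the $q^2+q+1$ points. Nothing is missing.
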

\begin{proof}
Observe that $e=\sum_{\alpha\in\mathcal{P}}(e_\alpha^1-e_{\alpha\alpha})$ and
that $Ne_{\alpha\alpha}=0$. From Lemma~\ref{lemma:Nealpha1}, we get
\[
Ne=\sum_{\alpha\in \mathcal{P}}Ne_\alpha^1=\sum_{\alpha\in\mathcal{P}}q^2ve=\left(\sum_{\alpha\in
      \mathcal{P}}q^2v\right)e=(q^2+q+1)q^2ve.
\]
\end{proof}

Before exhibiting other eigenvectors of $N$, we need to 
define two  families of vectors, both families are  indexed by
the pairs $(\alpha, \ell)$, where $\alpha$ is a point on the line $\ell$:
\begin{align}\label{eq:ealphaell}
e_{\alpha\ell}=\sum_{\beta\in \ell}e_{\alpha\beta}, \qquad e_{\ell\alpha}=\sum_{\beta\in \ell}e_{\beta\alpha}.
\end{align}
These vectors will be used to construct new  eigenvectors
of $N$, thus our first step is to calculate the value of $Ne_{\alpha \ell}$ and $Ne_{\ell\alpha}$.

\begin{lemma}\label{bastard}
Let $\alpha\in \mathcal{P}$ and let $\ell\in \mathcal{L}$ with $\alpha\in \ell$. Then 
\begin{equation}\label{equation:Nalphaell}
Ne_{\alpha\ell}=q^2v  \sum_{\substack{\beta \in \ell \\ \beta \neq \alpha}} e_{\alpha\beta} 
              +(q-1)v\sum_{\substack{\gamma\not\in\ell \\ \eta \not \in \alpha \vee \gamma }} e_{\gamma \eta} 
              + qv \left( \sum_{\substack{\gamma \not \in \ell \\ \eta \in \alpha \vee \gamma \\ \eta \neq \gamma }}e_{\gamma \eta} 
              + \sum_{\substack{\gamma\in\ell \\ \gamma \neq \alpha \\ \eta \not \in \ell}} e_{\gamma \eta} \right).
\end{equation}
\end{lemma}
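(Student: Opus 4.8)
The plan is to prove the identity coordinate by coordinate. Since $e_{\alpha\ell}=\sum_{\beta\in\ell}e_{\alpha\beta}$, we have $Ne_{\alpha\ell}=\sum_{\beta\in\ell}Ne_{\alpha\beta}$, and hence the $(\gamma,\delta)$-coordinate of $Ne_{\alpha\ell}$ equals $\sum_{\beta\in\ell}N_{(\gamma,\delta),(\alpha,\beta)}=\sum_{\beta\in\ell}N_{(\alpha,\beta),(\gamma,\delta)}$, using the symmetry~\eqref{equation:2}. First I would record two immediate reductions: the summand with $\beta=\alpha$ vanishes because the $(\alpha,\alpha)$-column of $M$ is zero, so the sum is effectively over $\beta\in\ell\setminus\{\alpha\}$; and the whole coordinate is $0$ whenever $\gamma=\delta$. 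It then remains to evaluate $\sum_{\beta\in\ell\setminus\{\alpha\}}N_{(\alpha,\beta),(\gamma,\delta)}$ for $\gamma\neq\delta$ by reading off each term from Proposition~\ref{crossratio}, organised by the position of $(\gamma,\delta)$ relative to the flag $(\alpha,\ell)$.

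I would split into the cases (a) $\gamma=\alpha$, (b) $\gamma\notin\ell$, and (c) $\gamma\in\ell\setminus\{\alpha\}$, each with a sub-analysis on $\delta$. In case (a) the equations $\alpha^g=\beta$ and $\gamma^g=\alpha^g=\delta$ force $\beta=\delta$, so by Proposition~\ref{crossratio} the only nonzero term is $N_{(\alpha,\delta),(\alpha,\delta)}=q^2v$, occurring precisely when $\delta\in\ell\setminus\{\alpha\}$; this accounts for the summand $q^2v\sum_{\beta\in\ell,\,\beta\neq\alpha}e_{\alpha\beta}$. In case (c), every $\beta\in\ell\setminus\{\alpha\}$ makes $\alpha,\beta,\gamma$ collinear; if $\delta\in\ell$ then all four points are collinear and the entry is $0$, while if $\delta\notin\ell$ then exactly three of the four are collinear for each such $\beta$, giving $q$ terms each equal to $v$; this yields $qv\sum_{\gamma\in\ell,\,\gamma\neq\alpha,\,\delta\notin\ell}e_{\gamma\delta}$.

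Case (b) is the substantive one. Here $\alpha\vee\gamma$ meets $\ell$ only in $\alpha$, so as $\beta$ runs over $\ell\setminus\{\alpha\}$ neither of the triples $\{\alpha,\beta,\gamma\}$, $\{\alpha,\beta,\delta\}$ is collinear, and the value of $N_{(\alpha,\beta),(\gamma,\delta)}$ is decided by the triples $\{\alpha,\gamma,\delta\}$ and $\{\beta,\gamma,\delta\}$. If $\delta\in\alpha\vee\gamma$ (which includes $\delta=\alpha$), then $\{\alpha,\gamma,\delta\}$ is collinear and $\{\beta,\gamma,\delta\}$ is not for any $\beta\in\ell\setminus\{\alpha\}$; so every term equals $v$ and the coordinate is $qv$, contributing $qv\sum_{\gamma\notin\ell,\,\delta\in\alpha\vee\gamma,\,\delta\neq\gamma}e_{\gamma\delta}$. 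If $\delta\notin\alpha\vee\gamma$, then $\{\alpha,\gamma,\delta\}$ is never collinear, the line $\gamma\vee\delta$ meets $\ell$ in a unique point distinct from $\alpha$, so exactly one $\beta\in\ell\setminus\{\alpha\}$ makes $\{\beta,\gamma,\delta\}$ collinear (term $v$) and the remaining $q-1$ choices of $\beta$ give $4$-tuples with no three collinear (term $u$ each); the coordinate is therefore $v+(q-1)u$. The arithmetic that makes everything fit is the identity $(q-1)u=(q-2)v$, immediate from~\eqref{equation:4} and already implicit in the proof of Lemma~\ref{lemma:Nealpha1}, which turns $v+(q-1)u$ into $(q-1)v$ and produces the summand $(q-1)v\sum_{\gamma\notin\ell,\,\delta\notin\alpha\vee\gamma}e_{\gamma\delta}$. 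Collecting the four contributions gives exactly~\eqref{equation:Nalphaell}.

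I expect the only real difficulty to be the bookkeeping in case (b): one must correctly locate the point $(\gamma\vee\delta)\wedge\ell$ and check that it differs from $\alpha$ precisely when $\delta\notin\alpha\vee\gamma$, and one must verify that the sub-case $\delta\in\ell\setminus\{\alpha\}$ is subsumed under $\delta\notin\alpha\vee\gamma$ — there it contributes $v$ from each of the $q-1$ points $\beta\in\ell\setminus\{\alpha,\delta\}$ (with $\{\alpha,\beta,\delta\}$ the collinear triple) and $0$ from $\beta=\delta$, again totalling $(q-1)v$. No geometric input beyond Proposition~\ref{crossratio} and the incidence axioms of $\PG_2(q)$ is needed.
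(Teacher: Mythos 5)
Your proposal is correct and follows essentially the same route as the paper: expand $Ne_{\alpha\ell}=\sum_{\beta\in\ell\setminus\{\alpha\}}Ne_{\alpha\beta}$, evaluate each $(\gamma,\delta)$-coordinate via Proposition~\ref{crossratio} through a case analysis on the position of $(\gamma,\delta)$ relative to the flag $(\alpha,\ell)$, and use the identity $(q-1)u+v=(q-1)v$ to collapse the generic case. The paper merely organizes the cases by the location of $\ell\wedge(\gamma\vee\delta)$ rather than by whether $\gamma$ equals $\alpha$, lies on $\ell$, or lies off $\ell$, but the content is identical.
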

\begin{proof}
  Let $\gamma,\delta\in \mathcal{P}$. Denote by $w$ the vector on the
  right-hand side of~\eqref{equation:Nalphaell}. We will compare the
  $(\gamma,\delta)$-coordinate of $Ne_{\alpha\ell}$ and $w$.  First
  note that if $\gamma=\delta$, then
  $(Ne_{\alpha\ell})_{(\gamma,\delta)}=w_{(\gamma,\delta)}=0$, and hence we
  will suppose that $\gamma\neq \delta$. 

The $(\gamma,\delta)$-coordinate of right-hand side of~\eqref{equation:Nalphaell} can be expressed as
\begin{equation}\label{oneoneonemore}
w_{(\gamma,\delta)}=
\begin{cases}
q^2v&\textrm{if }  \ell = \gamma \vee \delta \textrm{ and }\gamma=\alpha,\\
0 & \textrm{if }  \ell = \gamma \vee \delta \textrm{ and }\gamma \neq \alpha,\\
0&\textrm{if }     \ell\neq \gamma\vee\delta \textrm{ and }\gamma=\alpha,\\
(q-1)v&\textrm{if }\ell\neq \gamma\vee\delta, \gamma\neq \alpha \textrm{ and }\ell\wedge(\gamma\vee \delta)\notin\{\alpha,\gamma\},\\
qv&\textrm{if }    \ell\neq \gamma\vee\delta, \gamma\neq \alpha \textrm{ and }\ell\wedge(\gamma\vee \delta)\in \{\alpha,\gamma\}.\\
\end{cases}
\end{equation}

From the definition of $e_{\alpha\ell}$ in~\eqref{eq:ealphaell} and Lemma~\ref{lemma:-1}, we
get $Ne_{\alpha\ell}=\sum_{\beta\in \ell\setminus\{\alpha\}}Ne_{\alpha\beta}$. In particular,
$(Ne_{\alpha\ell})_{(\gamma,\delta)}=\sum_{\beta\in\ell\setminus\{\alpha\}}N_{(\gamma,\delta),(\alpha,\beta)}$. We
now compare this number with~\eqref{oneoneonemore}. 

First assume that $\ell=\gamma\vee \delta$.  If $\alpha=\gamma$,
then from Proposition~\ref{crossratio} we see that
$N_{(\gamma,\delta),(\alpha,\beta)}\neq 0$ only if $\beta=\delta$ and
in this case its value is $q^2v$. If
$\alpha\neq\gamma$, then  Proposition~\ref{crossratio} gives
$\sum_{\beta\in \ell\setminus\{\alpha\}}N_{(\gamma,\delta),(\alpha,\beta)}=0$.

Now assume that $\ell\neq \gamma\vee\delta$ and $\gamma=\alpha$. As $\ell\neq \gamma\vee\delta$, we get 
$\delta \not \in \ell$, and hence $\delta \neq \beta$, for all $\beta
\in \ell\setminus\{\alpha\}$. Thus $\sum_{\beta\in
  \ell\setminus\{\alpha\}}N_{(\gamma,\delta),(\alpha,\beta)}=0$. 

Now assume that $\ell\neq \gamma\vee\delta$, $\gamma\neq\alpha$ and $\ell\wedge(\gamma\vee\delta)\notin\{\alpha,\gamma\}$. Here  we  consider two cases depending on whether $\ell\wedge(\gamma\vee\delta) = \delta$ or $\ell\wedge(\gamma\vee\delta)\neq \delta$. If $\ell\wedge(\gamma\vee\delta)=\delta$, then
Proposition~\ref{crossratio} gives that
$N_{(\gamma,\delta),(\alpha,\beta)}=v$ for every $\beta\in
\ell\setminus\{\alpha,\delta\}$, and
$N_{(\gamma,\delta),(\alpha,\beta)}=0$ when $\beta=
\delta$. Thus $\sum_{\beta\in
  \ell\setminus\{\alpha\}}N_{(\gamma,\delta),(\alpha,\beta)}=(q-1)v$.
Now suppose that $\ell\wedge(\gamma\vee\delta)\neq\delta$, and hence $\ell\wedge(\gamma\vee\delta)\notin\{\alpha,\gamma,\delta\}$. Define $\alpha'=\ell\wedge(\gamma\vee\delta)$. Now, Proposition~\ref{crossratio} gives
that $N_{(\gamma,\delta),(\alpha,\beta)}=u$ for every $\beta\in\ell\setminus\{\alpha,\alpha'\}$, and $N_{(\gamma,\delta),(\alpha,\beta)}=v$ when $\beta=\alpha'$. Thus
$\sum_{\beta\in \ell\setminus\{\alpha\}}N_{(\gamma,\delta),(\alpha,\beta)}=(q-1)u+v=(q-1)v$.

Finally, assume that $\ell\neq \gamma\vee\delta$, $\gamma\neq
\alpha$ and $\ell\wedge(\gamma\vee\delta)\in \{\alpha,\gamma\}$. Here Proposition~\ref{crossratio} gives  $N_{(\gamma,\delta),(\alpha,\beta)}=v$, for every $\beta \in\ell\setminus\{\alpha\}$. Thus $\sum_{\beta\in \ell\setminus\{\alpha\}}N_{(\gamma,\delta),(\alpha,\beta)}=qv$. 
\end{proof}

\begin{remark}\label{remark}{\rm In the proof of Lemma~\ref{eigenvalueeasy} we will use~\eqref{equation:Nalphaell}. However, for the computations there it is  convenient to express the equality in~\eqref{equation:Nalphaell} as a linear combination of vectors of the form $e_{\alpha'\ell'}$. It is straightforward to see  that
\begin{align*}
Ne_{\alpha\ell}&=q^2v(e_{\alpha\ell}-e_{\alpha\alpha})+(q-1)v\sum_{\gamma\notin\ell}(e_\gamma^1-e_{\gamma(\alpha\vee\gamma)})\\\nonumber
&+
qv\left(\sum_{\gamma\notin \ell}(e_{\gamma(\alpha\vee\gamma)}-e_{\gamma\gamma})+\sum_{\gamma\in\ell\setminus\{\alpha\}}(e_{\gamma}^1-e_{\gamma\ell})\right).\\\nonumber
\end{align*}}
\end{remark}

We define two new sets of vectors of $V$ that we will show are eigenvectors of $N$.
Given three non-collinear points $\alpha, \beta, \gamma$ we write
\begin{equation}\label{equation:10}
\begin{aligned}
e_{\alpha\beta\gamma}^1&=
(e_{\alpha(\alpha\vee\beta)}-e_{\beta(\alpha\vee\beta)})+
(e_{\beta(\beta\vee\gamma)}-e_{\gamma(\beta\vee\gamma)})+
(e_{\gamma(\gamma\vee\alpha)}-e_{\alpha(\gamma\vee\alpha)}),\\
e_{\alpha\beta\gamma}^2&=
(e_{(\alpha\vee\beta)\alpha}-e_{(\alpha\vee\beta)\beta})+
(e_{(\beta\vee\gamma)\beta}-e_{(\beta\vee\gamma)\gamma})+
(e_{(\gamma\vee\alpha)\gamma}-e_{(\gamma\vee\alpha)\alpha}).
\end{aligned}
\end{equation}

\begin{lemma}\label{eigenvalueeasy}
Let $\alpha\in \mathcal{P}$ and let  $\ell\in \mathcal{L}$ with $\alpha\in \ell$. If
  $\alpha,\beta,\gamma$ are non-collinear, then
  $e_{\alpha\beta\gamma}^1$ and $e_{\alpha\beta\gamma}^2$ are
  eigenvectors of $N$ with eigenvalue $(q^2+q+1)v$. 
Moreover, the subspace
$
\langle  e_{\alpha\beta\gamma}^1,e_{\alpha\beta\gamma}^2\mid  \alpha,\beta,\gamma \textrm{ non-collinear points}\rangle
$ 
of $V$ has dimension $2q^3$.
\end{lemma}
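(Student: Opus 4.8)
Throughout write $g_{\mu\nu}=e_{\mu(\mu\vee\nu)}-e_{\nu(\mu\vee\nu)}$ for distinct points $\mu,\nu$, so that by~\eqref{equation:10} we have $e_{\alpha\beta\gamma}^1=g_{\alpha\beta}+g_{\beta\gamma}+g_{\gamma\alpha}$.

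For the eigenvalue statement, the plan is to compute $Ng_{\alpha\beta}$ by applying the formula of Remark~\ref{remark} to the two flags $(\alpha,\alpha\vee\beta)$ and $(\beta,\alpha\vee\beta)$ and subtracting; after the evident cancellations one finds
\[
Ng_{\alpha\beta}=q^2v\,g_{\alpha\beta}-q^2v\,(e_{\alpha\alpha}-e_{\beta\beta})+qv\big((e_\beta^1-e_{\beta(\alpha\vee\beta)})-(e_\alpha^1-e_{\alpha(\alpha\vee\beta)})\big)+v\sum_{x\notin\alpha\vee\beta}\big(e_{x(\alpha\vee x)}-e_{x(\beta\vee x)}\big).
\]
Summing over the three ordered sides $(\alpha,\beta),(\beta,\gamma),(\gamma,\alpha)$ of the triangle, the first term gives $q^2v\,e_{\alpha\beta\gamma}^1$; the diagonal terms $e_{\mu\mu}$ and the row-sum terms $e_\mu^1$ each telescope to $0$; the rest of the $qv$-term reassembles into $qv\,e_{\alpha\beta\gamma}^1$; and a short case analysis on the position of $x$ relative to $\{\alpha,\beta,\gamma\}$ (all contributions with $x\notin\{\alpha,\beta,\gamma\}$ cancel, while $x=\alpha,\beta,\gamma$ contribute $v(e_{\alpha(\alpha\vee\beta)}-e_{\alpha(\alpha\vee\gamma)})$, $v(e_{\beta(\beta\vee\gamma)}-e_{\beta(\beta\vee\alpha)})$, $v(e_{\gamma(\gamma\vee\alpha)}-e_{\gamma(\gamma\vee\beta)})$ respectively) makes the last sum reassemble into $v\,e_{\alpha\beta\gamma}^1$. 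Hence $Ne_{\alpha\beta\gamma}^1=(q^2+q+1)v\,e_{\alpha\beta\gamma}^1$. For $e_{\alpha\beta\gamma}^2$ I would invoke~\eqref{equation:3}: the linear involution $\sigma$ of $V$ with $\sigma(e_{\mu\nu})=e_{\nu\mu}$ commutes with $N$ and sends $e_{\alpha\beta\gamma}^1$ to $e_{\alpha\beta\gamma}^2$, so the same conclusion carries over.

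For the dimension statement, set $U^i=\langle e_{\alpha\beta\gamma}^i\mid\alpha,\beta,\gamma\text{ non-collinear}\rangle$; the involution $\sigma$ gives $\dim U^1=\dim U^2$. Let $F=\langle e_{x\ell}\mid x\in\ell\rangle$; the $e_{x\ell}$ are linearly independent (distinct rows, and for fixed row the pencil is free), so $\dim F=(q^2+q+1)(q+1)$ and $U^1\subseteq F$. Let $C$ be the complex space of $1$-chains of the complete graph on $\mathcal{P}$, $Z\subseteq C$ its cycle space, and $g\colon C\to F$ the linear map with $g([\mu\nu])=g_{\mu\nu}$. Triangle cycles span $Z$, and $g$ kills every collinear triangle cycle (if $\mu,\nu,\rho\in\ell$ then $g_{\mu\nu}+g_{\nu\rho}+g_{\rho\mu}=0$), so $U^1=g(Z)$. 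In the flag coordinates of $F$, $g(c)$ has $(x,\ell)$-entry $\sum_{\nu\in\ell\setminus\{x\}}c_{x\nu}$; summing this over the lines through a fixed $x$ shows $g(c)=0$ forces $\partial c=0$, i.e. $\ker g\subseteq Z$. Thus $g$ is injective on a complement of $Z$ in $C$, so $\dim U^1=\dim g(Z)=\mathrm{rank}\,g-\dim(C/Z)=\mathrm{rank}\,g-(q^2+q)$, and it remains only to compute $\mathrm{rank}\,g$. Summing the $(x,\ell)$-entry of $g(c)$ over $x\in\ell$ gives $0$, so $\mathrm{im}\,g$ lies in $F_0:=\{w\in F:\sum_{x\in\ell}w_{x\ell}=0\ \forall\ell\}$, of codimension $q^2+q+1$ in $F$. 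Conversely, for $w\in F_0$ the equations $\sum_{\nu\in\ell\setminus\{x\}}c_{x\nu}=w_{x\ell}$ decouple over the lines (two points lie on a unique line), and on each line $\ell$ the system is solvable precisely because $\sum_{x\in\ell}w_{x\ell}=0$ — this is the surjectivity of the boundary map of the complete graph on the $q+1$ points of $\ell$ onto the zero-sum vertex functions. Hence $\mathrm{im}\,g=F_0$, $\mathrm{rank}\,g=(q^2+q+1)q$, and $\dim U^1=(q^2+q+1)q-(q^2+q)=q^3$.

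Finally $U^1\cap U^2=0$: every vector of $U^1$ has all diagonal entries $0$ (visible row by row for each $e_{\alpha\beta\gamma}^1$) and lies in $F$, and dually every vector of $U^2$ has zero diagonal and lies in $F':=\langle e_{\ell x}\mid x\in\ell\rangle$; but a vector of $F\cap F'$ is determined by a function $a$ on $\mathcal{L}$ through $v_{x\delta}=a_{x\vee\delta}$ for $x\neq\delta$ and $v_{xx}=\sum_{\ell\ni x}a_\ell$, and vanishing of the diagonal forces $a$ into the kernel of the (non-singular) point–line incidence matrix of $\PG_2(q)$, hence $a=0$. Therefore $\dim(U^1+U^2)=2q^3$. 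The one genuinely delicate step is the identification $\mathrm{im}\,g=F_0$, where the incidence geometry of $\PG_2(q)$ is used essentially; the rest is telescoping and standard linear algebra.
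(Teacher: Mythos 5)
Your proof is correct. The eigenvalue computation is essentially the paper's: both apply the reformulation of $Ne_{\alpha\ell}$ from Remark~\ref{remark} to the two flags on each side of the triangle and let the three sums telescope (your per-edge formula for $Ng_{\alpha\beta}$ agrees term by term with the displayed formula in the paper's proof). Where you genuinely diverge is the dimension count. The paper fixes a flag $(\bar{\alpha},\bar{\ell})$ and exhibits the $2q^3$ vectors $e^1_{\bar{\alpha}\beta\gamma},e^2_{\bar{\alpha}\beta\gamma}$ with $\beta\notin\bar{\ell}$, $\gamma\in\bar{\ell}\setminus\{\bar{\alpha}\}$, proving them linearly independent by coordinate extraction; strictly speaking this only yields the lower bound $\dim\geq 2q^3$, which is all that Proposition~\ref{lemma:4} needs. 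You instead realize $U^1$ as the image of the cycle space of the complete graph on $\mathcal{P}$ under the map $[\mu\nu]\mapsto e_{\mu(\mu\vee\nu)}-e_{\nu(\mu\vee\nu)}$ into flag space, identify the image of the full chain space as the zero-row-sum subspace $F_0$ (using the per-line surjectivity of the boundary map of $K_{q+1}$), deduce $\dim U^1=q^3$ exactly, and then get $U^1\cap U^2=0$ from the non-singularity of the point--line incidence matrix of $\PG_2(q)$. This is longer but buys the exact dimension stated in the lemma rather than only the inequality, and it isolates cleanly where the geometry of the plane enters (the decoupling over lines and the incidence matrix). Both arguments are sound; yours is the more structural, the paper's the more economical for its intended application.
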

\begin{proof}
  Let $\alpha,\beta,\gamma$ be non-collinear points. We start by  showing that
  $Ne_{\alpha\beta\gamma}^1=e_{\alpha\beta\gamma}^1(q^2+q+1)v$. Observe that the same equality holds for  $e_{\alpha\beta\gamma}^2$ from~\eqref{equation:3}. 

From~\eqref{equation:Nalphaell} and Remark~\ref{remark} applied with
  $\ell=\alpha\vee\beta$, we obtain that
\begin{align*}
N(e_{\alpha(\alpha\vee\beta)}-e_{\beta(\alpha\vee\beta)})=&
q^2v( e_{\alpha (\alpha \vee \beta)} - e_{\beta (\alpha \vee \beta)}) -q^2v (e_{\alpha\alpha} - e_{\beta \beta}) \\
&+ (q-1)v \left( \sum_{\zeta \not \in \alpha \vee \beta }e_{\zeta (\zeta \vee \beta)}-e_{\zeta  (\zeta \vee \alpha)}\right) \\
&+ qv \left( \sum_{\zeta \not \in \alpha \vee \beta}  e_{\zeta (\zeta \vee \alpha)} - e_{\zeta (\zeta \vee \beta)} \right) 
   +qv \left( \sum_{\eta \not \in \alpha \vee \beta} e_{\beta \eta} - e_{\alpha \eta}\right) \\
=&
q^2v \left( e_{\alpha (\alpha \vee \beta)} - e_{\beta (\alpha \vee \beta)} \right)  -q^2v (e_{\alpha\alpha} - e_{\beta \beta}) \\
& + v \left( \sum_{\zeta \not \in \alpha \vee \beta}  e_{\zeta (\zeta \vee \alpha)} - e_{\zeta (\zeta \vee \beta)} \right) 
+qv \left( \sum_{\eta \not \in \alpha \vee \beta} e_{\beta \eta} - e_{\alpha \eta}\right).
\end{align*}

Applying this formula to each of
$e_{\alpha(\alpha\vee\beta)}-e_{\beta(\alpha\vee\beta)}$,
$e_{\beta(\beta\vee\gamma)}-e_{\gamma(\beta\vee\gamma)}$ and
$e_{\gamma(\alpha\vee\gamma)}-e_{\alpha(\alpha\vee\gamma)}$, we get
that $Ne_{\alpha\beta\gamma}^1$ equals
\begin{align*}
& q^2v \left(  e_{\alpha (\alpha \vee \beta)} - e_{\beta (\alpha \vee \beta)} + e_{\beta (\beta \vee \gamma)} - e_{\gamma (\beta \vee \gamma)}+ e_{\gamma (\gamma \vee \alpha) } - e_{\alpha (\gamma \vee \alpha)}\right) \\
&+ v \left( \sum_{\zeta \not \in \alpha \vee \beta} ( e_{\zeta (\zeta \vee \alpha)} - e_{\zeta (\zeta \vee \beta)}) + \sum_{\zeta \not \in \beta \vee \gamma}  (e_{\zeta (\zeta \vee \beta)} - e_{\zeta (\zeta \vee \gamma)})+ \sum_{\zeta \not \in \gamma \vee \alpha} (e_{\zeta (\zeta \vee \gamma)} - e_{\zeta (\zeta \vee \alpha)}) \right)\\
&+q v \left( \sum_{\eta \not \in \alpha \vee \beta} e_{\beta \eta} - e_{\alpha \eta} + \sum_{\eta \not \in \beta \vee \gamma} e_{\gamma \eta} - e_{\beta \eta} + \sum_{\eta \not \in \gamma \vee \alpha} e_{\alpha \eta} - e_{\gamma \eta}\right). 
\end{align*}
Write $Ne_{\alpha\beta\gamma}^1=q^2vX_1+vX_2+qvX_3$, where $q^2vX_1,vX_2,qvX_3$ are the three summands in the above equation.
From~\eqref{equation:10}, we have $X_1=e_{\alpha\beta\gamma}^1$, we will show that $X_2$ and $X_3$ equal $e_{\alpha\beta\gamma}^1$ as
well.

Note that by rearranging the terms,  $X_2$ can be rewritten as
\begin{align}\label{eqeqeqeq1}
 \! \!\sum_{\zeta \notin \alpha \vee \beta}\! e_{\zeta \zeta\vee \alpha}\!-\! \!\sum_{\zeta \notin  \gamma \vee \alpha}\! e_{\zeta \zeta\vee \alpha} 
   + \! \!\sum_{\zeta \notin \beta \vee\gamma}\! e_{\zeta \zeta\vee \beta} \!-\!\! \sum_{\zeta \notin \alpha \vee \beta }\! e_{\zeta \zeta\vee \beta}
   + \!\! \sum_{\zeta \notin  \gamma \vee \alpha }\! e_{\zeta \zeta\vee \gamma}\!-\!\!\sum_{\zeta \notin \beta \vee \gamma}\! e_{\zeta \zeta\vee \gamma}. 
\end{align}

Observe now that
\begin{equation*}
\sum_{\zeta \notin \alpha \vee \beta}\! e_{\zeta (\zeta\vee \alpha)}\!-\! \!\sum_{\zeta \notin  \gamma \vee \alpha}\! e_{\zeta (\zeta\vee \alpha)}=\sum_{\substack{\zeta \in \gamma \vee \alpha \\ \zeta \neq \alpha}} e_{\zeta (\zeta\vee \alpha)} 
     \!-\!\!\sum_{\substack{\zeta \in \alpha \vee \beta \\ \zeta \neq \alpha}} e_{\zeta (\zeta\vee \alpha)},
\end{equation*}
and that a similar equality holds for the other two terms in~\eqref{eqeqeqeq1}. Therefore $X_2$ equals
\begin{align*}
&  \!\sum_{\substack{\zeta \in \gamma \vee \alpha \\ \zeta \neq \alpha}} e_{\zeta \zeta\vee \alpha} 
     \!-\!\!\sum_{\substack{\zeta \in \alpha \vee \beta \\ \zeta \neq \alpha}} e_{\zeta \zeta\vee \alpha}
     +\!\sum_{\substack{\zeta \in \alpha \vee \beta \\ \zeta \neq \beta}} e_{\zeta \zeta\vee \beta}
     \!-\!\!\sum_{\substack{\zeta \in \beta \vee \gamma \\ \zeta \neq \beta}} e_{\zeta \zeta\vee \beta}
     +\!\sum_{\substack{\zeta \in \beta \vee \gamma \\\zeta \neq \gamma}} e_{\zeta \zeta\vee \gamma}
     \!-\!\!\sum_{\substack{\zeta \in \gamma \vee \alpha  \\ \zeta \neq \gamma}} e_{\zeta \zeta\vee \gamma}\\
 =&  (\!\sum_{\substack{\zeta \in \alpha \vee \beta \\ \zeta \neq \beta}}\! e_{\zeta \zeta\vee \beta} 
              \!-\!\! \sum_{\substack{\zeta \in \alpha \vee \beta \\ \zeta \neq \alpha}}\! e_{\zeta \zeta\vee \alpha})
     +(\!\sum_{\substack{\zeta \in \alpha \vee \gamma \\ \zeta \neq \alpha}}\! e_{\zeta \zeta\vee \alpha} 
              \!-\!\!\sum_{\substack{\zeta \in \alpha \vee \gamma \\ \zeta \neq \gamma}}\! e_{\zeta \zeta\vee \gamma})
     +(\!\sum_{\substack{\zeta \in \beta \vee \gamma\\\zeta \neq \gamma}}\! e_{\zeta \zeta\vee \gamma} 
            \! - \!\!\sum_{\substack{\zeta \in \beta \vee \gamma \\ \zeta \neq \beta}}\! e_{\zeta \zeta\vee \beta} ) \\
 =& (e_{\alpha (\alpha \vee \beta)} - e_{\beta (\alpha \vee \beta)})
   + (e_{\beta (\beta \vee \gamma)} - e_{\gamma (\beta \vee \gamma)})
   + (e_{\gamma (\gamma \vee \alpha) } - e_{\alpha (\gamma \vee \alpha)})=e_{\alpha\beta\gamma}^1.
\end{align*}

By rearranging the terms, we see that $X_3$ equals
\begin{align*}
 \left( \sum_{\eta \not \in \gamma \vee \alpha} e_{\alpha \eta}  - \sum_{\eta \not \in \alpha \vee \beta}e_{\alpha \eta} \right)
     + \left( \sum_{\eta \not \in \alpha \vee \beta} e_{\beta \eta} - \sum_{\eta \not \in \beta \vee \gamma} e_{\beta \eta} \right)
     + \left( \sum_{\eta \not \in \beta \vee \gamma} e_{\gamma \eta} -  \sum_{\eta \not \in \gamma \vee \alpha}  e_{\gamma \eta} \right), 
\end{align*}
which reduces to $(e_{\alpha \alpha \vee \beta} - e_{\alpha \alpha \vee \gamma}) 
+ (e_{\beta \beta \vee \gamma} 
- e_{\beta \beta \vee \alpha}) 
+ (e_{\gamma \gamma \vee \alpha} 
- e_{\gamma \gamma \vee \beta})=e_{\alpha\beta\alpha}^1$.
Putting all of these together, we have that $Ne_{\alpha\beta\gamma}^1 = (q^2+q+1)v e_{\alpha\beta\gamma}^1$.

It remains to show that these vectors span a subspace of dimension
$2q^3$. Fix $\bar{\ell}$ and $\bar{\alpha}\in\bar{\ell}$.  We will
show that the vectors in
\[
T= \{ e_{\bar{\alpha}\beta\gamma}^1, e_{\bar{\alpha}\beta\gamma}^2 \mid 
 \beta\in\mathcal{P}\setminus\bar{\ell},
\gamma\in\bar{\ell}\setminus \{\bar{\alpha}\} \}
\]
are linearly independent.  Assume that
\begin{align}\label{linearcombO}
 \sum_{\substack{\beta\in\mathcal{P}\setminus\bar{\ell} \\ \gamma\in\bar{\ell}\setminus \{\bar{\alpha}\}}} a_{\beta\gamma} \; e_{\bar{\alpha}\beta\gamma}^1 
+ \sum_{\substack{\beta\in\mathcal{P}\setminus\bar{\ell} \\ \gamma\in\bar{\ell}\setminus \{\bar{\alpha}\}}} b_{\beta\gamma} \; e_{\bar{\alpha}\beta\gamma}^2 = 0.
\end{align}

Pick $\zeta$ and $\eta$ in $\mathcal{P}\setminus \bar{\ell}$ with $(\zeta \vee
\eta) \cap \bar{\ell} \neq \bar{\alpha}$. Then the $(\zeta,
\eta)$-coordinate of $e_{\bar{\alpha}\beta\gamma}^1$ is non-zero only if
$\beta = \zeta$ and $\gamma = (\zeta \vee \eta) \cap \bar{\ell}$.
Similarly, the $(\zeta, \eta)$-coordinate of $e_{\bar{\alpha}\beta\gamma}^2$
is non-zero only if $\beta = \eta$ and $\gamma = (\zeta \vee \eta) \cap
\bar{\ell}$. Thus, for any three collinear points $\zeta, \eta, \gamma$ with $\gamma
\in \bar{\ell}$, we have
\[
a_{\zeta \gamma} + b_{\eta \gamma}=0.
\]
Observe that if $\zeta'\in (\zeta\vee\eta)\setminus\{\eta\}$ and $\eta'\in (\zeta\vee\eta)\setminus\{\zeta\}$, then by applying the same argument we obtain $a_{\zeta'\gamma}+b_{\eta\gamma}=0$ and $a_{\zeta\gamma}+b_{\eta'\gamma}=0$. Hence, with a connectedness argument, we get both
\[
a_{\zeta \gamma} = a_{\eta \gamma}, \qquad b_{\zeta\gamma}=b_{\eta\gamma}.
\]

Next consider $\gamma \in \bar{\ell}$ and $\eta \notin \bar{\ell}$. By taking  the $(\eta,
\gamma)$-coordinate of~\eqref{linearcombO} we get
\[
a_{\eta\gamma} + \sum_{\substack{\beta \in \eta \vee \gamma \\ \beta \neq \gamma}} b_{\beta\gamma}=0.
\]
By varying $\eta$ with other points on the line $\eta \vee \gamma$ we
get 
\[
a_{\eta\gamma}=0, \qquad \sum_{\substack{\beta \in \eta \vee \gamma \\ \beta \neq \gamma}} b_{\beta \gamma} = 0.
\]
Thus $a_{\eta\gamma}=b_{\eta\gamma}=0$, for all $\eta \notin
\bar{\ell}$ and $\gamma \in \ell \setminus \{\alpha\}$.
\end{proof}

We give one last family of eigenvectors of $N$, these are
based on four collinear points.  Given four distinct collinear points
$\alpha,\beta,\gamma,\delta$, define
\begin{equation}\label{equation:10-1}
e_{\alpha\beta\gamma\delta}=(e_{\alpha\gamma}-e_{\alpha\delta})+(e_{\beta\delta}-e_{\beta\gamma}).
\end{equation}

\begin{lemma}\label{eigenvaluehard}
  If $\alpha,\beta,\gamma,\delta$ are four distinct collinear points,
  then $e_{\alpha\beta\gamma\delta}$ is an eigenvector of $N$ with
  eigenvalue $q^2v$. Moreover, for $q>2$, the subspace 
\[
\langle  e_{\alpha\beta\gamma,\delta}\mid \alpha,\beta,\gamma,\delta \textrm{ distinct collinear points}\rangle
\]
of $V$ has dimension at least $(q^2+q+1)(q^2-q-1)$.
\end{lemma}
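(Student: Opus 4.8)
The plan is to establish the two assertions of Lemma~\ref{eigenvaluehard} separately: first that each $e_{\alpha\beta\gamma\delta}$ is an eigenvector of $N$ with eigenvalue $q^2v$, and then that sufficiently many of these span a space of the claimed dimension. For the eigenvalue computation, I would use Proposition~\ref{crossratio} directly. Since $\alpha,\beta,\gamma,\delta$ all lie on a common line $\ell$, the only non-zero entries $N_{(\gamma',\delta'),(\alpha',\beta')}$ relevant to $Ne_{\alpha\beta\gamma\delta}$ are those where the source pair $(\alpha',\beta')$ lies in $\{(\alpha,\gamma),(\alpha,\delta),(\beta,\gamma),(\beta,\delta)\}$. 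I would compute $(Ne_{\alpha\beta\gamma\delta})_{(\gamma',\delta')} = N_{(\gamma',\delta'),(\alpha,\gamma)} - N_{(\gamma',\delta'),(\alpha,\delta)} + N_{(\gamma',\delta'),(\beta,\delta)} - N_{(\gamma',\delta'),(\beta,\gamma)}$ by splitting into cases according to the geometric position of $(\gamma',\delta')$ relative to $\ell$: namely whether $\gamma' = \delta'$, whether $\gamma'\vee\delta' = \ell$ (with further subcases according to which of $\alpha,\beta,\gamma,\delta$ coincide with $\gamma'$ or $\delta'$), or whether $\gamma'\vee\delta'\neq\ell$. In the off-line case one should see massive cancellation: since $\alpha$ and $\beta$ play symmetric roles and so do $\gamma$ and $\delta$ (up to the sign pattern in~\eqref{equation:10-1}), the contributions pair up and vanish, and likewise when $\gamma'\vee\delta'=\ell$ but $\{\gamma',\delta'\}$ is disjoint from $\{\alpha,\beta,\gamma,\delta\}$. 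The only surviving coordinates should be at the "support" pairs appearing in~\eqref{equation:10-1}, where one gets $\pm q^2v$, confirming the eigenvalue is $q^2v$. This is routine but must be done carefully because the value $v$ appears in three different geometric configurations in Proposition~\ref{crossratio}.

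For the dimension bound, I would fix a line $\bar\ell$ and work inside the subspace spanned by $\{e_{\beta\beta'}\mid \beta,\beta'\in\bar\ell\}$, which has dimension $(q+1)^2$, and show that the $e_{\alpha\beta\gamma\delta}$ with $\alpha,\beta,\gamma,\delta\in\bar\ell$ span a subspace of dimension at least $q^2-q-1$ there. Summing over all $q^2+q+1$ lines, and noting these subspaces (one per line) meet only in a controlled way — a vector $e_{\beta\beta'}$ with $\beta\neq\beta'$ determines the line $\beta\vee\beta'$, while the diagonal vectors $e_{\beta\beta}$ never appear in any $e_{\alpha\beta\gamma\delta}$ — I would conclude the total dimension is at least $(q^2+q+1)(q^2-q-1)$. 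The within-a-line count reduces to a purely combinatorial linear-algebra fact: on a set of $q+1$ collinear points, the vectors $(e_{\alpha\gamma}-e_{\alpha\delta}) - (e_{\beta\gamma}-e_{\beta\delta})$ span, inside the $(q+1)^2$-dimensional space, a subspace whose dimension is $(q+1)^2$ minus the dimension of the space of "row-plus-column" vectors, i.e. $(q+1)^2 - (2(q+1)-1) = q^2$; intersecting with the off-diagonal part removes a further $q+1$ dimensions coming from the diagonal entries, leaving $q^2 - (q+1) = q^2-q-1$. I expect the hypothesis $q>2$ enters precisely to guarantee there are four distinct points on a line ($q+1\geq 4$) so that these vectors exist and the count is valid.

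The main obstacle will be the bookkeeping in the dimension argument: one must verify that the per-line subspaces really do assemble to give the product bound, which requires checking that a relation among the $e_{\alpha\beta\gamma\delta}$ across different lines forces a relation within each line separately. This follows from the observation that, for $\beta\neq\beta'$, the coordinate $e_{\beta\beta'}$ is "owned" by exactly one line, so projecting a global linear relation onto the coordinates indexed by pairs from a fixed line $\bar\ell$ yields a relation involving only the $e_{\alpha\beta\gamma\delta}$ supported on $\bar\ell$. Once that is in place, the problem decouples line by line and the elementary count above finishes the proof. The eigenvalue verification, while longer to write out, is mechanical given Proposition~\ref{crossratio} and poses no real difficulty.
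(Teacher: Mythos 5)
Your treatment of the eigenvalue claim and your line-by-line decomposition of the span both match the paper's proof in substance: the paper likewise runs a case analysis on Proposition~\ref{crossratio} (organized as a formula for $N(e_{\alpha\gamma}-e_{\alpha\delta})$, whose off-support terms depend only on $\gamma,\delta,\ell$ and therefore cancel against those of $N(e_{\beta\delta}-e_{\beta\gamma})$ --- exactly the symmetry you predict), and it likewise observes that the subspaces $W_\ell$ attached to distinct lines are supported on disjoint sets of off-diagonal coordinates $e_{\beta\beta'}$, so the global count decouples into a per-line count.

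The gap is in the per-line lower bound, which is the actual content of the dimension statement. Your computation --- $(q+1)^2$ minus the row-plus-column space minus the diagonal --- identifies the subspace $U\cap D$ of matrices on $\ell$ with all row sums zero, all column sums zero, and zero diagonal, and correctly finds $\dim(U\cap D)=q^2-q-1$. But every $e_{\alpha\beta\gamma\delta}$ with four \emph{distinct} points lies in $U\cap D$, so this argument only shows $W_\ell\subseteq U\cap D$, i.e.\ it gives an \emph{upper} bound $\dim W_\ell\le q^2-q-1$. The step where you pass from ``the $2\times 2$-minor vectors span the $q^2$-dimensional space $U$'' to ``the distinctness constraint costs exactly $q+1$ dimensions'' is not valid: the minor vectors with a repeated point (e.g.\ $\alpha=\gamma$) have nonzero diagonal entries and are \emph{not} in the span of the distinct-point family, so you cannot deduce that the restricted family still fills out $U\cap D$. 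Something must be proved here, and it is where the hypothesis $q>2$ genuinely bites (for $q=2$ the family is empty while $q^2-q-1=1$). The paper closes this by explicitly writing down $q^2-q-1=(q-1)(q-2)+2(q-2)+1$ vectors $e_{\alpha_i\alpha_j\alpha_k\alpha_m}$ on $\ell$ and checking they are in echelon form with respect to the lexicographic order on the $e_{\alpha_i\alpha_j}$. Alternatively, you could repair your route by bounding $\dim W_\ell^{\perp}$: show that any $Y$ with $Y_{\alpha\gamma}-Y_{\alpha\delta}-Y_{\beta\gamma}+Y_{\beta\delta}=0$ for all distinct collinear $\alpha,\beta,\gamma,\delta$ has its off-diagonal part of the form $r_\alpha+c_\gamma$ (this uses $q+1\ge 4$), so $\dim W_\ell^{\perp}\le 3q+2$. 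Either way, an argument is required; as written the lower bound is asserted by numerology only.
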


\begin{proof}
  Let $\zeta,\eta\in \mathcal{P}$ and  let $\ell$ be the line containing
  $\alpha,\beta,\gamma,\delta$. From Proposition~\ref{crossratio}, we
  see with a case-by-case analysis that
  $(Ne_{\alpha\gamma})_{(\zeta,\eta)}\neq
  (Ne_{\alpha\delta})_{(\zeta,\eta)}$ only if one of the following
  occurs: 
\begin{enumerate}[(i)]
\item $\zeta=\alpha$ and $ \eta = \gamma$;
\item $\zeta=\alpha$ and $ \eta = \delta$;
\item $\zeta=\delta$ and $\nu\notin\ell$;
\item $\zeta=\gamma$ and $\nu\notin\ell$; 
\item $\zeta,\eta\notin \ell$, $\zeta \neq \eta$ and $\{\zeta,\eta,\gamma\}$ are collinear; 
\item $\zeta,\eta\notin \ell$, $\zeta \neq \eta$ and $\{\zeta,\eta,\delta\}$ are collinear.
\end{enumerate}
Now, by considering these six cases separately, we get
\[
(N(e_{\alpha\gamma}-e_{\alpha\delta}))_{(\zeta,\eta)}=
\begin{cases}
q^2v&\textrm{if } \zeta=\alpha \textrm{ and } \eta=\gamma,\\
-q^2v&\textrm{if } \zeta=\alpha \textrm{ and } \eta=\delta,\\
v&\textrm{if } \zeta=\delta \textrm{ and } \eta\notin\ell,\\
-v&\textrm{if } \zeta=\gamma \textrm{ and } \eta\notin\ell,\\
v-u&\textrm{if } \zeta,\eta \notin\ell, \zeta \in\gamma\vee \eta \textrm{  and }\zeta \neq \eta\\
-(v-u)&\textrm{if }\zeta,\eta \notin\ell,  \zeta \in\delta\vee \eta \textrm{ and  }\zeta \neq \eta,\\
0&\textrm{otherwise}.
\end{cases}
\]
This shows that
\begin{align*}
N(e_{\alpha\gamma}-e_{\alpha\delta})&=q^2v(e_{\alpha\gamma}-e_{\alpha\delta})+v\left((e_\delta^2-e_{\ell\delta})-(e_\gamma^2-e_{\ell\gamma})\right) \\
& +(v-u)\left( \sum_{\nu\in\mathcal{P}\setminus\ell}(e_{\nu(\gamma\vee\nu)}-e_{\nu\nu}-e_{\gamma\gamma})
     -\sum_{\nu\in\mathcal{P}\setminus \ell}(e_{\nu(\delta\vee\nu)}-e_{\nu\nu}-e_{\delta\delta}) \right).
\end{align*}
An immediate application of this formula to $N(e_{\alpha\gamma}-e_{\alpha\delta})$ and $N(e_{\beta\delta}-e_{\beta\gamma})$ gives
\[
Ne_{\alpha\beta\gamma\delta}=q^2ve_{\alpha\beta\gamma\delta},
\]
and $e_{\alpha\beta\gamma\delta}$ is an eigenvector of $N$ with eigenvalue $q^2v$.

For $q>2$, it remains to give a lower bound on the dimension
of 
\[
W=\langle e_{\alpha\beta\gamma\delta}\mid \alpha,\beta,\gamma,\delta
\textrm{ distinct collinear points}\rangle.
\]
For $\ell\in\mathcal{L}$, set 
$
W_\ell=\langle e_{\alpha\beta\gamma\delta}\mid \alpha,\beta,\gamma,\delta \textrm{ distinct
  points in }\ell\rangle.
$
From the definition of $W$, we have
$W=\sum_{\ell\in\mathcal{L}}W_\ell$. Moreover, for $\ell,\ell'\in
\mathcal{L}$ with $\ell\neq \ell'$, we see that $W_\ell$ is orthogonal
(with respect to the standard scalar product) to $W_{\ell'}$ because no
two distinct points can lie in $\ell\wedge \ell'$. Thus
$W=\oplus_{\ell\in \mathcal{L}}W_\ell$. Since $|\mathcal{L}|=q^2+q+1$,
it remains to prove that $\dim W_\ell\geq q^2-q-1$.

Fix $\ell\in \mathcal{L}$ and write
$\ell=\{\alpha_0,\ldots,\alpha_q\}$. Consider the family of vectors
\begin{align*}
\mathcal{F}'=&\{ e_{\alpha_i\alpha_{q-1}\alpha_{j}\alpha_q} \mid 0\leq
i,j\leq q-2,\,i\neq j\}\cup
\{ e_{\alpha_{i}\alpha_{q-2}\alpha_{q-1}\alpha_q} \mid 0\leq i\leq q-3\}\\
&\cup\{e_{\alpha_{q-1}\alpha_q\alpha_i\alpha_{q-2}}\mid 0\leq i\leq q-3\}
\end{align*}
and set
$\mathcal{F}=\mathcal{F}'\cup\{e_{\alpha_{q-2}\alpha_q\alpha_{q-3}\alpha_{q-1}}\}$.
Clearly, $|\mathcal{F}|=(q-1)(q-2)+(q-2)+(q-2)+1=q^2-q-1$. We claim
that the vectors in $\mathcal{F}$ are linearly independent.  To see
this order the elements $(e_{\alpha_{i}\alpha_{j}})_{0\leq i,j\leq q}$
with the lexicographic order, that is,
$e_{\alpha_i\alpha_j}<e_{\alpha_{i'}\alpha_{j'}}$ if $i<i'$, or $i=i'$
and $j<j'$. By writing each $e_{\alpha\beta\gamma\delta}$ as a
$\{0,1\}$-vector of length $(q+1)^2$ with respect to the basis
$(e_{\alpha_i\alpha_j})_{0\leq i,j\leq q}$ and with respect to this
ordering, we see that the elements in $\mathcal{F}'$ are in echelon
form. In particular, the elements in $\mathcal{F}'$ are linearly
independent. Finally, it is easy to see that
$e_{\alpha_{q-2}\alpha_q\alpha_{q-3}\alpha_{q-1}}$ is linearly
independent with the elements of $\mathcal{F}'$ as the $(q,q-1)$-entry
is non-zero in this vector, but it is equal to zero in every vector in
$\mathcal{F}'$.
\end{proof}

\begin{proof}[Proof of Proposition~$\ref{lemma:4}$]
  
  If $q=2$, then the proof follows with a computation with the
  computer algebra system \texttt{magma}~\cite{magma}, so we
  assume that $q>2$.

Let $V_0$ be the subspace defined in Lemma~\ref{lemma:-1}.  From
Lemmas~\ref{lemma:3},~\ref{eigenvalueeasy} and~\ref{eigenvaluehard},
the rank of $N$ is at least $1+2q^3+(q^2+q+1)(q^2-q-1)$ and hence the
kernel of $N$ has dimension no more than
$(q^2+q+1)^2-(1+2q^3+(q^2+q+1)(q^2-q-1))=4(q^2+q)+1$. Therefore
Lemma~\ref{lemma:-1} gives that the kernel of $N$ has dimension
exactly $4(q^2+q)+1$ and hence equals $V_0$. As $N=M^TM$, we get that
the right kernel $M$ is also $V_0$.
\end{proof}


\begin{thebibliography}{10}

\bibitem{MR1429238}
R.~Ahlswede, L.~H. Khachatrian, The complete intersection theorem for systems of finite sets, \textit{European J. Combin.} \textbf{18} (1997), 125--136.

\bibitem{AhMe}B.~Ahmadi, K.~Meagher, The Erd\H{o}s-Ko-Rado property
  for some $2$-transitive groups, \texttt{arXiv:1308.0621}.


\bibitem{Ba}L.~Babai, Spectra of Cayley Graphs, \textit{J. Combin.
  Theory B.}  \textbf{2}, (1979), 180--189. 

\bibitem{magma}W.~Bosma, J.~Cannon, C.~Playoust, The Magma algebra
  system. I. The user language, \textit{J. Symbolic Comput.}
  \textbf{24} (1997), 235--265.


\bibitem{Cameron}P.~J.~Cameron, \textit{Projective and Polar spaces}, Queen Mary and Westfield College Lecture Notes, 2000.


\bibitem{CaKu}P.~J.~Cameron, C.~Y.~Ku, Intersecting families of
  permutations, \textit{European J. Combin.} \textbf{24} (2003),  881--890. 

\bibitem{ErKoRa}P.~Erd\H{o}s, C.~Ko, R.~Rado, Intersection theorems for
  systems of finite sets, \textit{Quart. J. Math. Oxford Ser.} \textbf{12} (1961),
   313--320. 

\bibitem{GoMe}C.~Godsil, K.~Meagher, A new proof of the
  Erd\H{o}s-Ko-Rado theorem for intersecting families of permutations,
  \textit{European J. of Combin.} \textbf{30} (2009), 404--414.

\bibitem{MR2302532}
C.~Y. Ku, T.~W.~H. Wong, Intersecting families in the alternating group and direct product of
  symmetric groups, \textit{Electron. J. Combin.} \textbf{14}: Research Paper 25, 15 pp.
  (electronic), 2007.

\bibitem{LaMa}B. Larose, C. Malvenuto, Stable sets of maximal size
  in Kneser-type graphs, \textit{European J. Combin.} \textbf{25} (2004),
   657--673. 

 \bibitem{KaPa}K.~Meagher, P.~Spiga, An Erd\H{o}s-Ko-Rado theorem for
   the derangement graph of $\mathrm{PGL}(2,q)$ acting on the
   projective line, \textit{J. Comb. Theory Series A} \textbf{118}
   (2011), 532--544.

 \bibitem{Steinberg}R.~Steinberg, The representations of
   $\mathrm{GL}(3,q)$, $\mathrm{GL}(4,q)$, $\mathrm{PGL}(3,q)$ and
   $\mathrm{PGL}(4,q)$, \textit{Canad. J. Math. }\textbf{3} (1951),
   225--235.

\bibitem{MR2419214}
J.~Wang, S.~J. Zhang, An {E}rd{\H o}s-{K}o-{R}ado-type theorem in {C}oxeter groups, \textit{European J. Combin.} \textbf{29} (2008), 1112--1115.


\end{thebibliography}
\end{document}